\begin{document}
%
\title{Approximating the Optimal Transport Plan via Particle-Evolving Method}
%
%
\author{Shu Liu\inst{1} \and
Haodong Sun\inst{1} \and
Hongyuan Zha\inst{2}}

%
\authorrunning{S. Liu et al.}
%
%
\institute{Georgia Institute of Technology, Atlanta, USA \\
\email{ \{sliu459,hsun310\}@gatech.edu}
\and
 The Chinese University of Hong Kong, Shenzhen, China \\
 \email{zhahy@cuhk.edu.cn}
 }
\maketitle              
\begin{abstract}
Optimal transport (OT) provides powerful tools for comparing probability measures in various types. The Wasserstein distance which arises naturally from the idea of OT is widely used in many machine learning applications. Unfortunately, computing the Wasserstein distance between two continuous probability measures always suffers from heavy computational intractability. In this paper, we propose an innovative algorithm that iteratively evolves a particle system to match the optimal transport plan for two given continuous probability measures. The derivation of the algorithm is based on the construction of the gradient flow of an Entropy Transport Problem which could be naturally understood as a classical Wasserstein optimal transport problem with relaxed marginal constraints. The algorithm comes with theoretical analysis and empirical evidence. 
\keywords{Optimal Transport \and Entropy Transport \and Wasserstein gradient flow \and Kernel Density Estimation \and Interacting Particle Systems}
\end{abstract}
\section{Introduction}

Optimal transport problem was initially formalized by the mathematician Gaspard Monge \cite{monge1781memoire}. Later a series of significant contribution in transportation theory leads to deep connections with more mathematical branches including partial differential equations, geometry, probability theory and statistics \cite{kantorovich1942translation,brenier1991polar}. Optimal transport provides a flexible framework for comparing probability measures. Monge and Kantorovich formulate the optimal transport problem in different ways, in which the Kantovorich formulation is a generalisation of Monge. For the Kantovorich's optimal transport problem, given two probability measures $\mu$ and $\nu$ defined on spaces $X$ and $Y$ respectively, and a cost function $c(x,y): X \times Y \to \mathbb{R}$, which measures the expense of moving one unit of mass from $x \in X$ to $y\in Y$, we aim at finding a joint distribution $\gamma^*$ defined on $Z = X \times Y$ such that the expectation of the cost over the joint distribution is minimized: 
\begin{equation}\label{eq:ot}
  \inf_{\gamma} \cbr[2]{\int_{X\times Y}c(x,y) \,{\mathrm{d}}\gamma (x,y) \Big\vert ~ \gamma \in \Pi (\mu ,\nu ) } .
\end{equation}
The marginal constraints are given by
\begin{equation}
  \label{eq:Pi}
  \Pi(\mu, \nu) = \cbr[2]{\gamma\, \Big\vert \, \gamma\in\mathcal{P}(X\times Y ), \,  \int_{Y} \gamma \dif y = \mu,\, \int_{X} \gamma \dif x = \nu}.
\end{equation}
 Here $\mathcal{P}(X\times Y )$ denotes the set of probabilities defined on $X \times  Y$.  In this work, we only consider $X = Y = \mathbb{R}^d$. The optimal value of the objective in \eqref{eq:ot} is defined as the Wasserstein distance between probability measures $\mu$ and $\nu$. 

In recent years, researchers in applied science fields also discover the importance of optimal transport. In spite of elegant theoretical results, generally computing Wasserstein distance is not an easy task. Computing the discrete optimal transport problem in a straightforward way leads to solving a linear programming problem whose computation cost can be unaffordable with large scale problem settings \cite{pele2009fast}.  In \cite{cuturi2013sinkhorn}, the author smooths the discrete OT problem with an entropic regularization, and designs a fast matrix scaling algorithm which demonstrates high efficiency. However, the computation can be even intractable when it comes to continuous case. In \cite{li2018parallel}, the authors compute the continuous problem by first discretizing the space, such treatment is unrealistic for many applications involving probabilistic measures lying in high dimensional space with complicated shapes. 
We have witnessed the success of deep neural networks in dealing with the large scale continuous OT problem \cite{arjovsky2017wasserstein,seguy2017large}. But is it possible to save some efforts for parameter tuning and deal with the continuous problem from another perspective? 

In this paper, instead of solving the standard continuous transport problem, we start with an entropy transport problem as a relaxed optimal transport problem with soft marginal constraints. Recently, the importance of entropy transport problem has drawn researchers' attention as people figuring out its duality connection with unbalanced optimal transport problem \cite{chizat2018unbalanced,liero2018optimal} and treat it as a canonical distance function on the space of positive measures \cite{liero2018optimal}. With these soft marginal constraints, we can realize the corresponding Wasserstein gradient flow as a time evolution Partial Differential Equation (PDE) and finally numerically solve the regularized problem by evolving an interacting particle system via Kernel Density Estimation techniques \cite{parzen1962estimation}. 
To get samples from optimal coupling, the traditional methods like Linear Programming \cite{oberman2015efficient,schmitzer2016sparse,walsh2017general} or Sinkhorn \cite{cuturi2013sinkhorn} usually start with the discretization of the whole continuous space and compute the transport plan for discrete setting as the approximation of the continuous case. Our algorithm can directly output the sample approximation of the optimal coupling without any discretization or training process as neural network method \cite{seguy2017large,korotin2019wasserstein,makkuva2020optimal}. This is also very different from other traditional methods like Monge-Amp\`{e}re Equation \cite{benamou2014numerical} or dynamical scheme \cite{benamou2000computational,li2018parallel,ruthotto2020machine}. We note that a recent independent work \cite{daaloul2021sampling} on sampling algorithm for Wasserstein Barycenter problems shares similar ideas with our proposed method.

Our main contribution is to analyze the theoretical properties of the entropy transport problem constrained on probability space and construct the corresponding Wasserstein gradient flow. For the constrained transport problem, we prove the existence and uniqueness of the solution under certain assumptions, and further study the $\Gamma$-convergence property of the entropy transport functionals to the classical optimal transport functional. Based on the gradient flow we derive, we propose an innovative algorithm for obtaining the sample approximation for the optimal plan. Our method can deal with optimal transport problem between two known densities. As far as we know, despite the classical discretization methods \cite{benamou2000computational,benamou2014numerical,li2018parallel} there is no scalable way to solve this type of problem. We also demonstrate the efficiency of our method by numerical experiments.

The paper is structured as follows. We introduce the constrained entropy transport as a regularized optimal transport problem in section 2; We carry out the Wasserstein gradient flow approach and its particle formulation in section 3; We design the algorithm as an interacting particle system in section 4; and demonstrate its accuracy with empirical evidences in section 5 before the conclusion. 

\section{Constrained Entropy Transport as a regularized Optimal Transport problem} \label{section introduction to ET}

\subsection{Entropy Transport problem}
In our research, we will mainly restrict our discussion on Euclidean Space (i.e. $\mathbb{R}^d$). We denote $\mathcal{M}(\mathbb{R}^d)$ as the space of finite positive Radon measures on $\mathbb{R}^d $. We denote $\mathcal{P}(\mathbb{R}^d)$ as the space of probability measures defined on $ \mathbb{R}^d$.

For $\mu,\nu\in\mathcal{M}(\mathbb{R}^d)$ and $\gamma\in\mathcal{M}(\mathbb{R}^d\times\mathbb{R}^d)$, 
We denote $\pi_1:\mathbb{R}^d\times\mathbb{R}^d\rightarrow \mathbb{R}^d$ as the projection onto the first coordinate: $\pi_1(x,y)=x$; and $\pi_2$ as the projection onto the second coordinate. $\#$ represents the push-forward operation\footnote{Suppose $T:\mathbb{R}^m\rightarrow\mathbb{R}^n$ is a measurable map, suppose $p$ is a measure defined on $\mathbb{R}^m$. Then $T_{\#}p$ is a measure defined on $\mathbb{R}^n$ satisfying: $T_{\#}p(E)=p(T^{-1}(E))\quad\forall~\textrm{measurable set}~E\subset\mathbb{R}^n$.
}. Let us consider the following functional:
\begin{align}
 \mathcal{E}(\gamma|\mu,\nu) =  \iint_{\mathbb{R}^d\times\mathbb{R}^d}c(x,y)d\gamma(x,y) +D({\pi_1}_{\#}\gamma \|\mu)+D({\pi_2}_{\#}\gamma \|\nu).\label{general ET functional}
\end{align}
Here $c:\mathbb{R}^d\times\mathbb{R}^d\rightarrow [0,+\infty]$ is a lower semicontinuous cost function.
$D(\cdot\|\cdot):\mathcal{M}(\mathbb{R}^d)\times\mathcal{M}(\mathbb{R}^d)\rightarrow \mathbb{R}$ is the divergence functional defined as:
\begin{equation}
  D(\mu\|\nu) = 
  \begin{cases}
  \int_{\mathbb{R}^d} F\left(\frac{d\mu}{d\nu}\right)d\nu \quad \textrm{if}~\mu\ll\nu\\
  +\infty \quad \textrm{otherwise}
  \end{cases}  \label{general divergence functional}
\end{equation}
Here $F:[0,+\infty)\rightarrow [0,+\infty]$ is some convex function and there exists at least one $s>0$ such that $F(s)<+\infty$. 

Then the general \textbf{ Entropy Transport problem} can be formulated as:
\begin{equation}
 \inf\limits_{\gamma\in\mathcal{M}(\mathbb{R}^d\times\mathbb{R}^d)}\left\{ \mathcal{E}(\gamma|\mu,\nu) \right\}. \label{general ET}
\end{equation}
It is not hard to show that $\mathcal{E}$ is convex on $\mathcal{M}(\mathbb{R}^d\times\mathbb{R}^d)$:
\begin{theorem}\label{thm convexity of ET functional}
Under the previous assumptions on $c$ and $F$, for any$\gamma_a,\gamma_b\in\mathcal{M}(\mathbb{R}^d\times\mathbb{R}^d)$ and $0\leq t\leq 1$, we have:
\begin{equation*}
\mathcal{E}(t\gamma_a+(1-t)\gamma_b)\leq t\mathcal{E}(\gamma_a)+(1-t)\mathcal{E}(\gamma_b).
\end{equation*}
\end{theorem}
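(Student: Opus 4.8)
The plan is to exploit the three-term structure of $\mathcal{E}(\cdot\,|\,\mu,\nu)$ and treat each summand separately, since a finite sum of convex functionals is convex. The first term $\gamma\mapsto\iint c\,d\gamma$ is \emph{linear} in $\gamma$ — integration against a fixed lower semicontinuous cost is additive and positively homogeneous, with no $\infty-\infty$ issues because $c\ge 0$ — so it satisfies the desired inequality with equality and needs no further work. The whole difficulty, such as it is, concentrates in the two relative-divergence terms $\gamma\mapsto D({\pi_1}_{\#}\gamma\|\mu)$ and $\gamma\mapsto D({\pi_2}_{\#}\gamma\|\nu)$. I would reduce both to a single lemma: for a fixed reference measure $\sigma\in\mathcal{M}(\mathbb{R}^d)$, the map $\rho\mapsto D(\rho\|\sigma)$ is convex on $\mathcal{M}(\mathbb{R}^d)$.

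Before proving that lemma I would dispose of the degenerate cases. If $t\in\{0,1\}$ the claimed inequality is a trivial equality, so assume $0<t<1$. If either $\mathcal{E}(\gamma_a)$ or $\mathcal{E}(\gamma_b)$ equals $+\infty$ then, since $c\ge0$ and $F\ge0$ make every summand nonnegative, the right-hand side is $+\infty$ and the inequality holds automatically. Hence I may assume both $\mathcal{E}(\gamma_a)$ and $\mathcal{E}(\gamma_b)$ are finite, which forces ${\pi_i}_{\#}\gamma_a\ll\sigma_i$ and ${\pi_i}_{\#}\gamma_b\ll\sigma_i$ for the relevant references $\sigma_1=\mu,\ \sigma_2=\nu$ (otherwise the corresponding $D$ would already be $+\infty$ by \eqref{general divergence functional}).

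The heart of the argument is then two observations chained together. First, push-forward is linear in the measure: for each $i$,
\begin{equation*}
{\pi_i}_{\#}\big(t\gamma_a+(1-t)\gamma_b\big)=t\,{\pi_i}_{\#}\gamma_a+(1-t)\,{\pi_i}_{\#}\gamma_b,
\end{equation*}
which follows directly from the defining identity $T_{\#}p(E)=p(T^{-1}(E))$ applied to $T=\pi_i$. Second, setting $f:=d({\pi_i}_{\#}\gamma_a)/d\sigma_i$ and $g:=d({\pi_i}_{\#}\gamma_b)/d\sigma_i$, both well defined by the absolute continuity just established, linearity of the Radon--Nikodym derivative gives $d({\pi_i}_{\#}(t\gamma_a+(1-t)\gamma_b))/d\sigma_i=tf+(1-t)g$. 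Applying the pointwise convexity of $F$, namely $F(tf+(1-t)g)\le tF(f)+(1-t)F(g)$ holding $\sigma_i$-a.e., and integrating against $\sigma_i$ yields
\begin{equation*}
D\big({\pi_i}_{\#}(t\gamma_a+(1-t)\gamma_b)\,\|\,\sigma_i\big)\le t\,D({\pi_i}_{\#}\gamma_a\|\sigma_i)+(1-t)\,D({\pi_i}_{\#}\gamma_b\|\sigma_i).
\end{equation*}
Summing this over $i\in\{1,2\}$ and adding the equality for the linear cost term completes the proof.

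As for obstacles: there is no deep analytic difficulty here — the inequality is ultimately just the pointwise convexity of $F$ integrated. The only place demanding care is the bookkeeping around the two-branch definition of $D$ in \eqref{general divergence functional}: one must confirm that the convex combination remains inside the absolutely continuous branch (guaranteed by linearity of push-forward and of Radon--Nikodym derivatives once both endpoints lie there) and that the $+\infty$ cases are absorbed by the nonnegativity of $c$ and $F$. I would therefore spend most of the write-up making these measure-theoretic reductions precise rather than on the final one-line convexity estimate.
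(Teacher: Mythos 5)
Your proposal is correct and takes essentially the same approach as the paper's proof: both exploit linearity of the cost term and of push-forward, reduce the divergence terms to the pointwise convexity of $F$ applied to convex combinations of Radon--Nikodym derivatives, and absorb the $+\infty$ cases via nonnegativity. The only cosmetic difference is that the paper splits on whether the four absolute-continuity conditions hold while you split on finiteness of $\mathcal{E}(\gamma_a)$ and $\mathcal{E}(\gamma_b)$; these reductions are equivalent.
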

We have the following theorem on existence and uniqueness of minimizer for problem \eqref{general ET}:
\begin{theorem}\label{thm existence uniqueness sol }
  We consider problem \eqref{general ET} involving the entropy transport functional defined in \eqref{general ET functional}. Suppose that the cost $c$ and $F$ satisfy the previous assumptions. We further assume that there exists at least one $\gamma\in\mathcal{M}(\mathbb{R}^d\times\mathbb{R}^d)$ such that $\mathcal{E}(\gamma|\mu,\nu)<+\infty$. Then the problem \eqref{general ET} admits at least one optimal solution.\\
  If we further assume $c(x,y)=h(x-y)$ with strictly convex $h:\mathbb{R}^d\rightarrow[0,+\infty)$; $F$ is strictly convex, and is superlinear, i.e. $\lim_{s\rightarrow+\infty}\frac{F(s)}{s}=+\infty$; distribution $\mu$,$\nu$ has density functions, i.e. $\mu\ll\mathscr{L}^d,\nu\ll\mathscr{L}^d$ where $\mathscr{L}^d$ denotes the Lebesgue measure on $\mathbb{R}^d$. Under these further assumptions, there exists unique optimal solution to the problem \eqref{general ET}.
\end{theorem}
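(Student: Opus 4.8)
The plan is to establish existence by the direct method of the calculus of variations, and to establish uniqueness by combining the convexity of $\mathcal{E}$ (Theorem~\ref{thm convexity of ET functional}) with the strict convexity of $F$ and the classical uniqueness theory for optimal transport with strictly convex cost.

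\textbf{Existence.} First I would fix a minimizing sequence $\{\gamma_n\}$ with $\mathcal{E}(\gamma_n|\mu,\nu)\to m:=\inf$, which is finite by hypothesis. The first task is a mass bound: since $\gamma_n(\mathbb{R}^d\times\mathbb{R}^d)=({\pi_1}_\#\gamma_n)(\mathbb{R}^d)$ and $D({\pi_1}_\#\gamma_n\|\mu)\le\mathcal{E}(\gamma_n|\mu,\nu)$ stays bounded, Jensen's inequality for the convex $F$ controls the total mass once $F$ grows fast enough at infinity; this is exactly where a growth (superlinearity) hypothesis on $F$ enters. Combined with tightness inherited from $\mu,\nu$ through the marginals, Prokhorov's theorem yields a subsequence converging weakly to some $\gamma$. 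I would then pass to the limit by lower semicontinuity: the cost term is weakly lower semicontinuous because $c$ is nonnegative and lower semicontinuous, while each divergence term is weakly lower semicontinuous because $F$ is convex and lower semicontinuous and the projections are continuous, so ${\pi_i}_\#\gamma_n\rightharpoonup{\pi_i}_\#\gamma$. Hence $\mathcal{E}(\gamma|\mu,\nu)\le\liminf_n\mathcal{E}(\gamma_n|\mu,\nu)=m$, so $\gamma$ is a minimizer.

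\textbf{Uniqueness, reduction to fixed marginals.} Suppose $\gamma_0\ne\gamma_1$ are both optimal. By Theorem~\ref{thm convexity of ET functional} the segment $\gamma_t=t\gamma_1+(1-t)\gamma_0$ satisfies $\mathcal{E}(\gamma_t)\le m$, while optimality gives $\mathcal{E}(\gamma_t)\ge m$; hence $\mathcal{E}$ is constant, equal to $m$, along the whole segment. Writing $\mathcal{E}=C+D_1+D_2$, with $C(\gamma)=\int c\,d\gamma$ linear and $D_1,D_2$ the two convex divergence terms, the affineness of the sum together with convexity of each summand forces each of $D_1,D_2$ to be affine in $t$. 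Now the marginal of $\gamma_t$ is $t({\pi_i}_\#\gamma_1)+(1-t)({\pi_i}_\#\gamma_0)$ by linearity of push-forward, and $D(\cdot\|\mu)=\int F(\cdot)\,d\mu$ is strictly convex in the density whenever $F$ is strictly convex; affineness of $D_1$ and $D_2$ therefore forces ${\pi_1}_\#\gamma_0={\pi_1}_\#\gamma_1=:\rho$ and ${\pi_2}_\#\gamma_0={\pi_2}_\#\gamma_1=:\sigma$. Consequently the two divergence terms take equal values on $\gamma_0$ and $\gamma_1$, so both minimize the linear cost $\int c\,d\gamma$ over $\Pi(\rho,\sigma)$; that is, both are optimal for the \emph{classical balanced} transport problem between $\rho$ and $\sigma$. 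Finiteness of $D(\rho\|\mu)$ also forces $\rho\ll\mu$, and since $\mu\ll\mathscr{L}^d$ we get $\rho\ll\mathscr{L}^d$ (and likewise $\sigma\ll\mathscr{L}^d$).

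\textbf{Conclusion and main obstacle.} Since $c(x,y)=h(x-y)$ with $h$ strictly convex and the first marginal $\rho$ is absolutely continuous with respect to $\mathscr{L}^d$, the classical uniqueness theory of optimal transport (Brenier--Gangbo--McCann) guarantees a unique optimal plan between $\rho$ and $\sigma$, induced by a transport map. This contradicts $\gamma_0\ne\gamma_1$, proving uniqueness. I expect the two decisive steps to be (i) for existence, the weak lower semicontinuity of the entropy/divergence functionals together with the mass bound that renders the sublevel sets weakly compact, and (ii) for uniqueness, the implication ``affine along the segment $\Rightarrow$ identical marginals,'' which rests on a careful strict-convexity argument for the integral functional $D(\cdot\|\mu)$; once the marginals are pinned down, the reduction to the strictly convex classical transport problem closes the argument cleanly.
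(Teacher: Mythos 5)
Your proposal should first be measured against the fact that the paper does not actually prove Theorem \ref{thm existence uniqueness sol } at all: Appendix A.1 simply defers to Theorem 3.3, Corollary 3.6 and Example 3.7 of \cite{liero2018optimal}. So your blind argument does genuinely more work than the paper, and in substance it reconstructs the route taken in that reference: the direct method (mass bound and tightness plus narrow lower semicontinuity of the cost and divergence terms) for existence, and, for uniqueness, the observation that strict convexity of $F$ pins down the marginals of any two minimizers, after which both minimizers solve a classical balanced transport problem between the same pair of absolutely continuous marginals, so that uniqueness for strictly convex translation-invariant costs (the paper's own Theorem \ref{uniqueness optimal distribution OT}, i.e.\ Theorem 6.2.4 of \cite{ambrosio2008gradient}) finishes the argument. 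The uniqueness half of your proof is correct as written: constancy of $\mathcal{E}$ along the segment together with convexity of each term (Theorem \ref{thm convexity of ET functional}) forces each divergence term to be affine in $t$; strict convexity of $F$ then forces equal marginals $\rho\ll\mu\ll\mathscr{L}^d$, $\sigma\ll\nu\ll\mathscr{L}^d$; and any strictly better plan in $\Pi(\rho,\sigma)$ would contradict optimality for $\mathcal{E}$, so both minimizers are optimal plans for the balanced problem and must coincide.

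The one genuine mismatch is in the existence half. Your mass bound and tightness argument (and also the narrow lower semicontinuity of $D(\cdot\|\mu)$ under the paper's convention that $D=+\infty$ off absolutely continuous measures) use superlinearity of $F$, but the first part of Theorem \ref{thm existence uniqueness sol } does not assume superlinearity; that hypothesis only appears in the uniqueness part. This cannot be repaired, because under the standing assumptions alone (convex $F\geq 0$ finite somewhere, lsc $c\geq 0$, feasibility) existence can genuinely fail: take $\mu,\nu$ probability measures, $F(s)=e^{-s}$ and $c\equiv 0$; then $\gamma_n=n\,(\mu\otimes\nu)$ gives $\mathcal{E}(\gamma_n|\mu,\nu)=2e^{-n}\to 0$, so the infimum of \eqref{general ET} is $0$, yet $\int e^{-d\sigma/d\mu}\,d\mu>0$ for every finite $\sigma\ll\mu$, so no minimizer exists. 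The first part of the theorem is in this respect a loose summary of the reference: the existence theorem of \cite{liero2018optimal} carries additional coercivity-type hypotheses beyond feasibility, which the paper's wording drops. Consequently your existence proof is valid exactly in the superlinear regime, which covers the Kullback--Leibler case the paper actually uses, but neither your argument nor any other can deliver the fully general first part as literally stated.
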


There could be many choices for cost $c$ and divergence $D$. For example, setting $c(x,y)=-\log\cos^2(\min    \{\frac{|x-y|}{2\tau},\pi  \})$ and $F(s)=s\log s-s+1$ leads to an entropy transport problem equivalent to solving for the Wasserstein-Fisher-Rao (or Hellinger–Kantorovich) distance between distributions $\mu$ and $\nu$ \cite{peyre2019computational}\cite{chizat2018unbalanced}\cite{liero2018optimal}.

In our research, we mainly treat the entropy transport problem \eqref{general ET} as a relaxed optimal transport problem with soft marginal constraints. Recall that optimal transport problem is formulated as:
\begin{equation}
  \inf_{\substack{\text{$\gamma\in\mathcal{P}(\mathbb{R}^d\times\mathbb{R}^d)$, } \\ \text{ $\pi_{1 \#}\gamma=\mu,\pi_{2 \#}\gamma=\nu$} }}\iint c(x,y)~d\gamma(x,y) \label{OT}.
\end{equation}
Here $\mu,\nu\in\mathcal{P}(\mathbb{R}^d)$. \eqref{OT} can also be treated as an entropy transport problem:
\begin{align}
  &\inf_{\gamma\in\mathcal{M}(\mathbb{R}^d\times\mathbb{R}^d)}\left\{\mathcal{E}_{\iota}(\gamma|\mu,\nu)\right\} \label{iota ET}  \\ \textrm{with}\quad 
  \mathcal{E}_{\iota}(\gamma|\mu,\nu) &= \iint c(x,y)d\gamma(x,y)  
  +\int\iota\left(\frac{d\pi_{1 \#}\gamma}{d\mu}\right)d\mu + \int\iota\left(\frac{d\pi_{1 \#}\gamma}{d\nu}\right)d\nu\label{iota ET functional}
\end{align}
Here we choose $F(\cdot)=\iota(\cdot)$ in the original functional \eqref{general ET functional} with $\iota$ defined as:
\begin{equation*}
  \iota(s) = \begin{cases}
    0 \quad \textrm{if}~ s\neq 1\\
    +\infty \quad \textrm{if}~ s=1
  \end{cases} 
\end{equation*}
It is not hard to verify that \eqref{iota ET} and \eqref{OT} are equivalent. 

In order to derive a relaxed optimal transport problem as an entropy transport problem, we relax the divergence terms $D(\pi_{1 \#}\gamma\|\mu),D(\pi_{2 \#}\gamma \| \nu)$ involving marginal distributions of $\gamma$: For example, we may replace the function $\iota(\cdot)$ with $\Lambda F(\cdot)$, here $\Lambda>0$ is a large positive number and $F$ is a smooth convex function with $F(1)=0$ and 1 is the unique minimizer. There are definitely many choices for $F$, some popular choices are the power-like entropies \cite{liero2018optimal}:
\begin{equation*}
  F_\alpha(s) = \begin{cases}
   \frac{1}{\alpha(\alpha-1)}(s^\alpha-\alpha(s-1)-1) ~ \textrm{if}~ \alpha>1 \\
   s\log s -s +1 ~\textrm{if}~ \alpha = 1
  \end{cases}
\end{equation*}
In our research, we mainly focus on the case when $\alpha = 1$ since it is a canonical divergence functional in transportation theory and enables us to establish corresponding theoretical results. On the other hand, $\alpha = 1$ leads to more concise form when we are deriving for our algorithms and can reduce the computational cost. It also worth mentioning that when $\alpha=1$, the corresponding divergence $D(\cdot \| \cdot )$ is usually called Kullback-Leibler (KL) divergence \cite{kullback1951information} and we will denote it as $D_{\rm{KL}}(\cdot\|\cdot)$.

From now on, we should focus on the following functional involving cost $c$ and enforcing the marginal constraints by using KL-divergence:
\begin{equation}
  \mathcal{E}_{\Lambda,\rm{KL}}(\gamma|\mu,\nu) = \iint_{\mathbb{R}^d\times\mathbb{R}^d} c(x,y)~d\gamma(x,y) + \Lambda D_{\rm{KL}}(\pi_{1 \#} \gamma \| \mu ) + \Lambda D_{\rm{KL}}(\pi_{2 \#} \gamma \| \nu ). \label{KL ET functional}
\end{equation}
Since a majority of our applications are devoted to optimal transport problems with specific form of cost functions $c$ and marginals $\mu,\nu$, let us make the following assumptions in order to make our future discussion concise:
\begin{align}
   (A).~& c(x,y) = h(x-y)~\textrm{with}~ h ~\textrm{a strictly convex function}.\nonumber\\
   \quad (B). ~& \mu,\nu\in\mathcal{P}(\mathbb{R}^d)~\rm{and}~   \mu\ll\mathscr{L}^d, \nu\ll\mathscr{L}^d ~ \label{additional cond}
\end{align}
Here $\mathscr{L}^d$ is Lebesgue measure on $\mathbb{R}^d$.
\begin{theorem}\label{uniqueness optimal distribution OT}
 Under additional assumptions \eqref{additional cond}, the optimal transport problem \eqref{OT} admits a unique solution.
\end{theorem}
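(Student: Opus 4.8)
The plan is to follow the classical Gangbo--McCann strategy: establish existence of an optimizer by the direct method, show that strict convexity of $h$ together with $\mu\ll\mathscr{L}^d$ forces every optimal plan to be induced by a transport map, and then deduce uniqueness by averaging. For existence, note $\Pi(\mu,\nu)$ is nonempty since $\mu\otimes\nu\in\Pi(\mu,\nu)$. As $\mu,\nu$ are fixed probability measures they are tight, and a standard argument shows $\Pi(\mu,\nu)$ is tight, hence weakly sequentially precompact by Prokhorov's theorem and weakly closed. Since $c=h(x-y)\geq 0$ is lower semicontinuous, $\gamma\mapsto\int c\,d\gamma$ is weakly lower semicontinuous, so the direct method yields a minimizer (we assume, as is implicit, that the optimal value is finite).

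Next I would show every minimizer is a graph. Any optimal $\gamma$ is concentrated on a $c$-cyclically monotone set $\Gamma$, and there is a $c$-concave Kantorovich potential $\phi$ with $\Gamma\subseteq\partial_c\phi$. At any $x$ where $\phi$ is differentiable, optimality forces $\nabla\phi(x)=\nabla h(x-y)$ for every $y$ with $(x,y)\in\Gamma$; strict convexity of $h$ makes $\nabla h$ injective, so $y$ is uniquely recovered as $y=x-(\nabla h)^{-1}(\nabla\phi(x))=:T(x)$. The potential $\phi$ is locally Lipschitz (semiconcave), hence differentiable $\mathscr{L}^d$-a.e.\ by Rademacher's theorem, and since $\mu\ll\mathscr{L}^d$ it is differentiable $\mu$-a.e.; thus $T$ is well defined $\mu$-a.e.\ and $\gamma=(\mathrm{id},T)_\#\mu$.

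For uniqueness, suppose $\gamma_0,\gamma_1$ are both optimal. The objective is linear and $\Pi(\mu,\nu)$ convex, so $\tfrac12(\gamma_0+\gamma_1)$ is again optimal and, by the previous paragraph, induced by a single map. Writing $\gamma_i=(\mathrm{id},T_i)_\#\mu$, the plan $\tfrac12\big((\mathrm{id},T_0)_\#\mu+(\mathrm{id},T_1)_\#\mu\big)$ is concentrated on a graph only if $T_0=T_1$ holds $\mu$-a.e., whence $\gamma_0=\gamma_1$.

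The crux, and the main obstacle, is the middle step: the two hypotheses combine essentially here, with absolute continuity of $\mu$ supplying differentiability $\mu$-a.e.\ and strict convexity of $h$ guaranteeing $\nabla h$ is injective so the transport target can be inverted. The most delicate point is verifying the regularity of $\phi$ (local Lipschitz/semiconcavity, and hence a.e.\ differentiability) from only strict convexity of $h$; without mild growth or finiteness conditions on $h$ this may require additional care.
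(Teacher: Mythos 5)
Your argument is correct in outline and is essentially the paper's own proof: the paper proves this theorem by citing Theorem 6.2.4 of \cite{ambrosio2008gradient}, and that theorem is established by exactly the Gangbo--McCann scheme you describe (direct method for existence, graph structure of every optimal plan via $c$-cyclical monotonicity and differentiability of a Kantorovich potential using $\mu\ll\mathscr{L}^d$ together with strict convexity of $h$, then uniqueness by averaging two optimal plans). The two caveats you flag are genuine but are resolved in the cited proof: finiteness of the optimal cost is a standing hypothesis there, and the regularity of $\varphi$ is obtained not by global local Lipschitz continuity but by a localization and approximate-differentiability argument, which is precisely the ``additional care'' you anticipate.
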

The proof of this theorem can be found in \cite{ambrosio2008gradient} Theorem 6.2.4.

\subsection{Constrained Entropy Transport problem and some of its properties} \label{subsec: PET}
We now restrict the functional $\mathcal{E}_{\Lambda,\rm{KL}}(\cdot|\mu,\nu)$ to the probability manifold $\mathcal{P}(\mathbb{R}^d\times\mathbb{R}^d)$ instead of $\mathcal{M}(\mathbb{R}^d\times\mathbb{R}^d)$. There are mainly two reasons of such restriction:
\begin{itemize}
  \item This will allow us to compute the Wasserstein gradeint flow of $\mathcal{E}_{\Lambda,\rm{KL}}(\cdot|\mu,\nu)$ on probability manifold and will later lead to our algorithm in form of an interacting particle system (See section \ref{flow of ET}); 
  \item When restricting on $\mathcal{P}(\mathbb{R}^d\times\mathbb{R}^d)$, there is a natural $\Gamma-$convergence of $\{\mathcal{E}_{\Lambda,\rm{KL}}( \cdot | \mu , \nu)\}_{\Lambda}$ to the entropy transport functional corresponding to Optimal Transport problem \eqref{OT} as $\Lambda\rightarrow +\infty$ (See Theorem \ref{thmGamma});
\end{itemize}
We thus consider the following optimization problem:
\begin{equation} 
  \inf_{\gamma\in\mathcal{P}(\mathbb{R}^d\times\mathbb{R}^d)}\left\{ \mathcal{E}_{\Lambda,\rm{KL}}(\gamma |\mu , \nu)  \right\}
  \label{constrained-ET}.
\end{equation}
In the following discussion, we will call such problem \eqref{constrained-ET} \textbf{constrained Entropy Transport problem} since we constrain the space for $\gamma$ on $\mathcal{P}(\mathbb{R}^d\times\mathbb{R}^d)$.

We now denote $\mathcal{E}_{\min}=\inf_{\gamma\in\mathcal{P}(\mathbb{R}^d\times\mathbb{R}^d)}\{\mathcal{E}_{\Lambda,\rm{KL}}(\gamma| \mu ,   \nu)\}$. It can be shown that this infimum value $\mathcal{E}_{\min}$ is finite, i.e. $\mathcal{E}_{\min}>-\infty$. The following theorem shows the existence of the optimal solution to problem \eqref{constrained-ET}. It also describes the relationship between the solution 
to the constrained Entropy Transport problem \eqref{constrained-ET} and the solution to the general Entropy Transport problem \eqref{general ET}:
\begin{theorem}\label{main thm A}
  Suppose  $\tilde{\gamma}$ is the solution to original entropy transport problem \eqref{general ET}. Then we have $\tilde{\gamma}=Z\gamma$, here $Z = e^{-\frac{\mathcal{E}_{\rm{min}}}{2\Lambda}       } $ and $\gamma\in\mathcal{P}(\mathbb{R}^d\times\mathbb{R}^d)$ is the solution to constrained Entropy Transport problem \eqref{constrained-ET}.
\end{theorem}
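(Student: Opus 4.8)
The plan is to exploit the simple way in which $\mathcal{E}_{\Lambda,\mathrm{KL}}$ responds to a dilation of the total mass of its argument. Every nonzero $\tilde\gamma\in\mathcal{M}(\mathbb{R}^d\times\mathbb{R}^d)$ factors uniquely as $\tilde\gamma=\lambda\gamma$ with $\lambda=\tilde\gamma(\mathbb{R}^d\times\mathbb{R}^d)>0$ its total mass and $\gamma\in\mathcal{P}(\mathbb{R}^d\times\mathbb{R}^d)$. Hence the unconstrained minimization \eqref{general ET} over $\mathcal{M}$ is equivalent to a joint minimization over $(\lambda,\gamma)\in(0,\infty)\times\mathcal{P}(\mathbb{R}^d\times\mathbb{R}^d)$, and the strategy is to separate the two variables: minimize over $\lambda$ in closed form, and show that what remains is minimized precisely by the constrained minimizer of \eqref{constrained-ET}.

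First I would compute the explicit $\lambda$-dependence of $\mathcal{E}_{\Lambda,\mathrm{KL}}(\lambda\gamma\mid\mu,\nu)$ for $\gamma\in\mathcal{P}(\mathbb{R}^d\times\mathbb{R}^d)$. The cost term is linear, $\iint c\,d(\lambda\gamma)=\lambda\iint c\,d\gamma$. For the two divergence terms, since $\gamma$ is a probability measure its marginals $\pi_{1\#}\gamma,\pi_{2\#}\gamma$ are probability measures, so the generalized divergence \eqref{general divergence functional} reduces to ordinary relative entropy; substituting $F(s)=s\log s-s+1$ and using $\mu(\mathbb{R}^d)=\nu(\mathbb{R}^d)=1$ gives
\[
D_{\mathrm{KL}}(\lambda\pi_{1\#}\gamma\,\|\,\mu)=\lambda\,D_{\mathrm{KL}}(\pi_{1\#}\gamma\,\|\,\mu)+\lambda\log\lambda-\lambda+1,
\]
and likewise for the second marginal. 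Summing yields the clean decomposition
\[
\mathcal{E}_{\Lambda,\mathrm{KL}}(\lambda\gamma\mid\mu,\nu)=\lambda\,\mathcal{E}_{\Lambda,\mathrm{KL}}(\gamma\mid\mu,\nu)+2\Lambda(\lambda\log\lambda-\lambda+1),
\]
where on the right $\mathcal{E}_{\Lambda,\mathrm{KL}}(\gamma\mid\mu,\nu)$ is the constrained functional of \eqref{constrained-ET}.

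Next I would minimize over $\lambda$ with $\gamma$ fixed. Writing $g=\mathcal{E}_{\Lambda,\mathrm{KL}}(\gamma\mid\mu,\nu)$, the map $\lambda\mapsto\lambda g+2\Lambda(\lambda\log\lambda-\lambda+1)$ has second derivative $2\Lambda/\lambda>0$, so it is strictly convex on $(0,\infty)$; setting its derivative $g+2\Lambda\log\lambda$ to zero gives the unique minimizer $\lambda^\ast(\gamma)=e^{-g/(2\Lambda)}$ and optimal value $2\Lambda\bigl(1-e^{-g/(2\Lambda)}\bigr)$. Since $g\mapsto 2\Lambda\bigl(1-e^{-g/(2\Lambda)}\bigr)$ is strictly increasing, the reduced objective is minimized over $\mathcal{P}(\mathbb{R}^d\times\mathbb{R}^d)$ exactly when $g=\mathcal{E}_{\Lambda,\mathrm{KL}}(\gamma\mid\mu,\nu)$ is minimized, i.e. at the constrained minimizer $\gamma$ with $g=\mathcal{E}_{\min}$ (finite by the remark preceding the statement, and attained by the existence assertions for both problems, the unconstrained one via Theorem \ref{thm existence uniqueness sol }). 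Substituting back gives $\lambda^\ast=e^{-\mathcal{E}_{\min}/(2\Lambda)}=Z$, so the unconstrained minimizer equals $\tilde\gamma=Z\gamma$, as claimed.

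The hard part will be the scaling identity of the second paragraph: one must verify that the divergence \eqref{general divergence functional}, evaluated on a scaled probability marginal against a probability reference, produces exactly the correction $\lambda\log\lambda-\lambda+1$, which is where the specific algebra of $F(s)=s\log s-s+1$ together with the normalizations $\mu(\mathbb{R}^d)=\nu(\mathbb{R}^d)=1$ is essential; for a general $F$ the clean separation would fail. I would also record the degenerate case $\tilde\gamma=0$, whose value $2\Lambda$ matches the limit of the objective as $\lambda\to0^{+}$ and strictly exceeds the interior minimum, so restricting to $\lambda>0$ discards nothing.
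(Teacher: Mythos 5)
Your proposal is correct and follows essentially the same route as the paper's proof: both decompose an arbitrary positive measure as (total mass) $\times$ (probability measure), derive the identity $\mathcal{E}_{\Lambda,\mathrm{KL}}(\lambda\gamma|\mu,\nu)=\lambda\,\mathcal{E}_{\Lambda,\mathrm{KL}}(\gamma|\mu,\nu)+2\Lambda(\lambda\log\lambda-\lambda+1)$ from the specific algebra of $F(s)=s\log s-s+1$ and the normalization of $\mu,\nu$, minimize over the mass in closed form to get the value $2\Lambda\bigl(1-e^{-\mathcal{E}_{\Lambda,\mathrm{KL}}(\gamma|\mu,\nu)/(2\Lambda)}\bigr)$, and conclude by monotonicity that the optimal mass is $Z=e^{-\mathcal{E}_{\min}/(2\Lambda)}$ and the normalized measure solves \eqref{constrained-ET}. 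The only cosmetic difference is that the paper finishes with an inequality-turned-equality argument applied to $\tilde\gamma$ rather than your explicit separation of the joint minimization, and your handling of the degenerate case $\lambda=0$ is if anything slightly more careful.
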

The following corollary 
guarantees the uniqueness of optimal solution to \eqref{constrained-ET}:
\begin{corollary}\label{corollary uniqueness constrained - ET}
  The constrained Entropy Transport problem admits a unique optimal solution.
\end{corollary}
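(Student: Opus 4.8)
The plan is to derive the corollary from the uniqueness of the minimizer of the \emph{general} (unconstrained) entropy transport problem \eqref{general ET}, transferred across the scaling correspondence recorded in Theorem \ref{main thm A}. First I would verify that the hypotheses of Theorem \ref{thm existence uniqueness sol } are met in the present KL setting: the cost is $c(x,y)=h(x-y)$ with $h$ strictly convex by assumption (A), the marginals satisfy $\mu,\nu\ll\mathscr{L}^d$ by assumption (B), and the divergence generator $F(s)=s\log s-s+1$ is strictly convex (since $F''(s)=1/s>0$ on $(0,\infty)$) and superlinear (since $F(s)/s=\log s-1+1/s\to+\infty$). Hence Theorem \ref{thm existence uniqueness sol } applies and the general problem \eqref{general ET} has a \emph{unique} optimal solution $\tilde\gamma\in\mathcal{M}(\mathbb{R}^d\times\mathbb{R}^d)$.

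The key is then to upgrade the one-directional statement of Theorem \ref{main thm A} into a genuine correspondence between constrained and general minimizers. To this end I would make the scaling dependence explicit: for any fixed $\gamma\in\mathcal{P}(\mathbb{R}^d\times\mathbb{R}^d)$ and $Z>0$, a direct computation using $\pi_{i\#}(Z\gamma)=Z\,\pi_{i\#}\gamma$ together with the algebraic form of the KL generator gives
\begin{equation*}
\mathcal{E}_{\Lambda,\mathrm{KL}}(Z\gamma\mid\mu,\nu)=Z\,\mathcal{E}_{\Lambda,\mathrm{KL}}(\gamma\mid\mu,\nu)+2\Lambda\,(Z\log Z-Z+1).
\end{equation*}
Minimizing the right-hand side over $Z>0$ yields the optimal scale $Z=e^{-\mathcal{E}_{\Lambda,\mathrm{KL}}(\gamma)/(2\Lambda)}$ and the optimized value $2\Lambda\bigl(1-e^{-\mathcal{E}_{\Lambda,\mathrm{KL}}(\gamma)/(2\Lambda)}\bigr)$, which is a strictly increasing function of $\mathcal{E}_{\Lambda,\mathrm{KL}}(\gamma)$. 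Since every element of $\mathcal{M}(\mathbb{R}^d\times\mathbb{R}^d)$ factors uniquely as $Z\gamma$ with $Z$ its total mass and $\gamma\in\mathcal{P}(\mathbb{R}^d\times\mathbb{R}^d)$, this identity shows that minimizing over $\mathcal{M}$ is equivalent to minimizing $\mathcal{E}_{\Lambda,\mathrm{KL}}$ over $\mathcal{P}$ and then choosing the optimal scale, with one common optimal scale $Z=e^{-\mathcal{E}_{\min}/(2\Lambda)}$ shared by \emph{every} constrained minimizer.

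With this correspondence in hand, uniqueness follows by transfer. Suppose $\gamma_1,\gamma_2$ are both optimal for the constrained problem \eqref{constrained-ET}, so $\mathcal{E}_{\Lambda,\mathrm{KL}}(\gamma_1)=\mathcal{E}_{\Lambda,\mathrm{KL}}(\gamma_2)=\mathcal{E}_{\min}$. Then, using the common optimal scale $Z=e^{-\mathcal{E}_{\min}/(2\Lambda)}$, both $Z\gamma_1$ and $Z\gamma_2$ attain the value $2\Lambda(1-Z)=\inf_{\mathcal{M}}\mathcal{E}_{\Lambda,\mathrm{KL}}$ and are therefore both optimal for the general problem \eqref{general ET}. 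By the uniqueness clause of Theorem \ref{thm existence uniqueness sol } we obtain $Z\gamma_1=Z\gamma_2$, and dividing by $Z>0$ gives $\gamma_1=\gamma_2$. Existence of a constrained minimizer is already furnished by Theorem \ref{main thm A} via $\gamma=\tilde\gamma/Z$, so \eqref{constrained-ET} admits a unique optimal solution.

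I expect the main obstacle to be the second step. Theorem \ref{main thm A} as stated only guarantees that the \emph{given} general minimizer factors through \emph{some} constrained minimizer, which by itself does not preclude distinct constrained minimizers lifting to distinct scalings. The explicit KL scaling identity is what closes this gap, since it forces the optimal scale to depend on $\gamma$ only through $\mathcal{E}_{\Lambda,\mathrm{KL}}(\gamma)$ and hence to be common to all constrained minimizers. Verifying that identity with due care — in particular handling the $+\infty$ branch of the divergence and checking that $\pi_{i\#}(Z\gamma)=Z\,\pi_{i\#}\gamma$ interacts correctly with $\mu,\nu$ being probability measures (so that the total masses collapse to the clean $Z\log Z-Z+1$ term) — is the one piece that genuinely needs attention.
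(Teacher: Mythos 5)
Your proof is correct and takes essentially the same route as the paper: both arguments lift constrained minimizers to minimizers of the general problem \eqref{general ET} via the common scale $Z=e^{-\mathcal{E}_{\min}/(2\Lambda)}$ and then invoke the uniqueness clause of Theorem \ref{thm existence uniqueness sol }, with the mass-scaling identity $\mathcal{E}_{\Lambda,\mathrm{KL}}(Z\gamma|\mu,\nu)=Z\,\mathcal{E}_{\Lambda,\mathrm{KL}}(\gamma|\mu,\nu)+2\Lambda(Z\log Z-Z)+2\Lambda$ doing the work (the paper establishes it inside the proof of Theorem \ref{main thm A} rather than re-deriving it in the corollary). Your explicit observation that the optimal scale depends on $\gamma$ only through $\mathcal{E}_{\Lambda,\mathrm{KL}}(\gamma)$, hence is shared by all constrained minimizers, is precisely the step the paper compresses into ``we can verify that $\mathcal{E}_{\Lambda,\mathrm{KL}}(Z\gamma|\mu,\nu)=\mathcal{E}_{\Lambda,\mathrm{KL}}(Z\gamma'|\mu,\nu)$.''
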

The proof of Theorem \ref{main thm A} and Corollary \ref{corollary uniqueness constrained - ET} are provided in Appendix \ref{App a2}.

We can now characterize the structure of the optimal solution to problem \eqref{constrained-ET}:
\begin{theorem}[Characterization of optimal distribution $\gamma_{cET}$ to problem \eqref{constrained-ET} ]\label{main part characterization of optimal gamma}
  We assume $\textrm{supp}(\mu)=\textrm{supp}(\nu)=\mathbb{R}^d$. Suppose $\gamma_{cET}$ solves the constrained Entropy Transport problem \eqref{constrained-ET}. Then there exist certain $\varphi,\psi \in B(\mathbb{R}^d;\mathbb{R})  $ satisfying: 
  $\varphi(x)+\psi(y)\leq c(x,y)$ for any $x,y\in\mathbb{R}^d$, such that:
  \begin{align}
    \varphi(x)+\psi(y)=c(x,y) & \quad  \gamma_{cET}  - \textrm{almost surely}, \nonumber \\
    \pi_{1 \#}\gamma_{cET} =e^{\frac{   \mathcal{E}_{   \textrm{min } }  - 2\varphi }{2\Lambda}} \mu & \quad   \pi_{2 \#}\gamma_{cET}  = e^{\frac{   \mathcal{E}_{   \textrm{min } }  - 2\psi }{2\Lambda}}\nu \label{main part:marginal entropy transport on P}
  \end{align}
\end{theorem}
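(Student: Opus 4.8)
The plan is to read off the potentials $\varphi,\psi$ from the first–order (Euler--Lagrange) optimality conditions for the constrained problem \eqref{constrained-ET}. A fixed minimizer $\gamma_{cET}$ exists and is unique by Corollary \ref{corollary uniqueness constrained - ET}, so I may compute with it directly; moreover, since $\mathcal{E}_{\Lambda,\rm{KL}}$ is convex (Theorem \ref{thm convexity of ET functional} in the KL case), a one–sided stationarity inequality along segments will be both necessary and sufficient to pin down the optimizer. The engine of the argument is the first variation of the functional \eqref{KL ET functional}: with $F(s)=s\log s-s+1$ we have $F'(s)=\log s$, so differentiating $\Lambda D_{\rm{KL}}(\pi_{1\#}\gamma\|\mu)+\Lambda D_{\rm{KL}}(\pi_{2\#}\gamma\|\nu)$ in the direction of $\gamma$ produces the ``gradient''
\[
G(x,y) = c(x,y) + \Lambda\log\frac{d\pi_{1\#}\gamma_{cET}}{d\mu}(x) + \Lambda\log\frac{d\pi_{2\#}\gamma_{cET}}{d\nu}(y).
\]

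First I would take a competitor $\sigma\in\mathcal{P}(\mathbb{R}^d\times\mathbb{R}^d)$, form the admissible segment $\gamma_\varepsilon=(1-\varepsilon)\gamma_{cET}+\varepsilon\sigma$, and differentiate $\varepsilon\mapsto\mathcal{E}_{\Lambda,\rm{KL}}(\gamma_\varepsilon)$ at $\varepsilon=0^+$. Minimality then yields the variational inequality $\int G\,d\sigma\ge\int G\,d\gamma_{cET}$ for all such $\sigma$, which is the statement that $G\ge\lambda$ with $\lambda:=\int G\,d\gamma_{cET}$, together with the complementarity relation $G=\lambda$ $\gamma_{cET}$–almost surely. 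Next I would identify the multiplier $\lambda$ with $\mathcal{E}_{\min}$: integrating $G$ against $\gamma_{cET}$, the cross terms $\int\log(d\pi_{i\#}\gamma_{cET}/d\,\cdot)\,d\pi_{i\#}\gamma_{cET}$ equal the corresponding KL divergences because $\pi_{1\#}\gamma_{cET}$ and $\pi_{2\#}\gamma_{cET}$ are probability measures (so the $-s+1$ contributions integrate to zero), whence $\int G\,d\gamma_{cET}=\mathcal{E}_{\Lambda,\rm{KL}}(\gamma_{cET})=\mathcal{E}_{\min}$. (Equivalently, one may reduce to the unconstrained problem \eqref{general ET} via Theorem \ref{main thm A}, where the same constant emerges from the factor $Z=e^{-\mathcal{E}_{\min}/2\Lambda}$.)

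With $\lambda=\mathcal{E}_{\min}$ in hand, I would simply \emph{define} the potentials by splitting the constant symmetrically,
\[
\varphi(x) = \tfrac{\mathcal{E}_{\min}}{2} - \Lambda\log\frac{d\pi_{1\#}\gamma_{cET}}{d\mu}(x),\qquad
\psi(y) = \tfrac{\mathcal{E}_{\min}}{2} - \Lambda\log\frac{d\pi_{2\#}\gamma_{cET}}{d\nu}(y).
\]
Solving for the densities immediately gives the two marginal identities in \eqref{main part:marginal entropy transport on P}, while $\varphi(x)+\psi(y)=c(x,y)-\big(G(x,y)-\mathcal{E}_{\min}\big)$ turns the inequality $G\ge\mathcal{E}_{\min}$ into $\varphi(x)+\psi(y)\le c(x,y)$ everywhere, with equality exactly on the set where $G=\mathcal{E}_{\min}$, i.e. $\gamma_{cET}$–almost surely. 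Finiteness of $\varphi,\psi$ (membership in $B(\mathbb{R}^d;\mathbb{R})$) follows from the hypothesis $\mathrm{supp}(\mu)=\mathrm{supp}(\nu)=\mathbb{R}^d$, which forces the marginal densities to be positive a.e. so that the logarithms are real–valued.

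The main obstacle is making the variational step rigorous rather than formal. Two points need care. First, differentiating the entropy terms is delicate because $F'(s)=\log s$ blows up as $s\to 0$ and $c=h(x-y)$ is unbounded; this must be controlled by restricting to competitors $\sigma$ with well–behaved (bounded, compactly supported) densities and exploiting convexity to pass from the difference quotient to the one–sided derivative. Second, and more importantly, upgrading $\gamma_{cET}$–a.s. stationarity to the \emph{pointwise} inequality $\varphi(x)+\psi(y)\le c(x,y)$ for \emph{every} $(x,y)$ cannot use Dirac competitors (their marginals are not absolutely continuous, making the KL terms infinite). Instead I would perturb by mass spread on a small ball $B_r(x_0,y_0)$ and let $r\to 0$, reading off $G(x_0,y_0)\ge\mathcal{E}_{\min}$ at continuity/Lebesgue points; it is precisely here that $\mathrm{supp}(\mu)=\mathrm{supp}(\nu)=\mathbb{R}^d$ is essential, guaranteeing that such absolutely continuous localized perturbations are admissible (finite KL) at every base point, and lower semicontinuity of $c$ then propagates the bound to all of $\mathbb{R}^d\times\mathbb{R}^d$.
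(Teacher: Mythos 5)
Your route is genuinely different from the paper's: the paper never takes a first variation of the constrained functional. Instead it characterizes the minimizer $\tilde{\gamma}$ of the \emph{unconstrained} problem \eqref{general ET} via strong duality (Theorem \ref{dual}, imported from Chizat et al.) together with the \emph{existence of an optimal dual pair} $(\varphi,\psi)$ with $\varphi\oplus\psi\leq c$ (Theorem \ref{thm existence dual pair}, imported from Liero et al., Section 4.4), extracts the marginal identities from a Legendre-transform complementarity argument (Theorem \ref{char optimal solution to general et}), and then transfers everything to the constrained minimizer by the rescaling $\tilde{\gamma}=Z\gamma_{cET}$ of Theorem \ref{main thm A}. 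Your plan — Euler--Lagrange conditions for \eqref{constrained-ET}, identification of the multiplier $\lambda=\mathcal{E}_{\min}$, and defining $\varphi,\psi$ from the log-densities — is structurally sound and the algebra is right, but it has a genuine gap precisely at the step you flag as delicate, and your proposed fix does not close it.

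The gap is the upgrade from an almost-everywhere to an \emph{everywhere} inequality $\varphi(x)+\psi(y)\leq c(x,y)$. Your localized perturbations (normalized restrictions of $\mu\otimes\nu$ to small product balls) yield $G\geq\mathcal{E}_{\min}$ only at Lebesgue points of $\log\rho_1$ and $\log\rho_2$, i.e. $\mu\otimes\nu$-a.e.; at the exceptional null set nothing is known. Lower semicontinuity of $c$ cannot propagate the bound there, because in your construction $\varphi$ and $\psi$ are log-densities — $L^1$-equivalence classes with no semicontinuity whatsoever — so two of the three summands in $G$ are uncontrolled on null sets. Nor can you simply redefine $\varphi,\psi$ on null sets: the failure set $N\subset\mathbb{R}^d\times\mathbb{R}^d$ need not have product structure, and the natural repair $\tilde\psi(y):=\inf_x\left(c(x,y)-\varphi(x)\right)$ enforces the pointwise inequality but may drop strictly below $\psi$ on a set of positive $\nu$-measure (the infimum can be attained on a $\mu$-null set where $\varphi$ is not controlled), destroying the marginal identity \eqref{main part:marginal entropy transport on P}. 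Producing potentials that satisfy the constraint at \emph{every} point while matching the prescribed densities a.e. is exactly the nontrivial content of dual attainment, which is why the paper invokes Theorem \ref{thm existence dual pair} rather than deriving it. Two further (repairable) slips: complementary slackness needs $G\geq\mathcal{E}_{\min}$ to hold $\gamma_{cET}$-a.e., but $\gamma_{cET}$ is generally \emph{not} absolutely continuous with respect to $\mu\otimes\nu$ (it is expected to concentrate near a graph), so your Lebesgue-a.e. bound does not transfer; you would need additional competitors of the form $\gamma_{cET}|_A/\gamma_{cET}(A)$. And a.e. positivity of the marginal densities (needed for $\varphi,\psi$ to be real-valued) does not follow from $\mathrm{supp}(\mu)=\mathrm{supp}(\nu)=\mathbb{R}^d$ as you assert; it must be deduced from optimality itself (a perturbation argument shows the one-sided derivative is $-\infty$ wherever a marginal density vanishes on a set of positive measure).
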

We provide a direct proof of this theorem in Appendix \ref{App a2}.
We can compare the structure of solution $\gamma_{cET}$ to \eqref{constrained-ET} with the solution $\gamma_{OT}$
to the Optimal Transport problem \eqref{OT}.
\begin{theorem}[Characterization of optimal distribution $\gamma_{OT}$ to problem \eqref{OT}]\label{main part characterization of optimal distribution to Optimal Transport}
If we assume additional condition on the cost function: $c(x,y)\leq a(x)+b(y)$ with $a\in L^1(\mu)$, $b\in L^1(\nu)$. Then there exists an optimal distribution $\gamma_{OT}$ to problem \eqref{OT}. There exist $\varphi,\psi\in C(\mathbb{R}^d)$ such that $\varphi(x)+\psi(y)\leq c(x,y)$ for any $x,y\in\mathbb{R}^d$ with:
\begin{align}
  \varphi(x)+\psi(y)=c(x,y)&\quad \gamma_{OT}-\textrm{almost surely}\nonumber  \\
  \pi_{1 \#}\gamma_{OT} = \mu& \quad \pi_{2 \#}\gamma_{OT} = \nu \label{main part:marginal optimal transport on P}
\end{align}
\end{theorem}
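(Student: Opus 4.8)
The plan is to derive this statement from classical Kantorovich duality, since \eqref{OT} is exactly the standard Monge--Kantorovich problem. Existence (and uniqueness) of a minimizer $\gamma_{OT}$ is already granted by Theorem \ref{uniqueness optimal distribution OT} under assumptions (A)--(B) in \eqref{additional cond}; for completeness it also follows from the direct method of the calculus of variations. The admissible set $\Pi(\mu,\nu)$ of \eqref{eq:Pi} is nonempty (it contains the product measure $\mu\otimes\nu$) and, because $\mu,\nu$ are probability measures, tight, hence sequentially weakly compact by Prokhorov's theorem; under assumption (A) the cost $c(x,y)=h(x-y)$ is continuous (a finite convex function on $\mathbb{R}^d$ is continuous) and nonnegative, so $\gamma\mapsto\iint c\,d\gamma$ is weakly lower semicontinuous. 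A minimizing sequence then has a weak limit $\gamma_{OT}\in\Pi(\mu,\nu)$ attaining the infimum, which gives the marginal identities $\pi_{1\#}\gamma_{OT}=\mu$ and $\pi_{2\#}\gamma_{OT}=\nu$ of \eqref{main part:marginal optimal transport on P} for free.

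Next I would invoke Kantorovich duality to produce the potentials. The integrability hypothesis $c(x,y)\le a(x)+b(y)$ with $a\in L^1(\mu)$, $b\in L^1(\nu)$ is precisely what guarantees a finite dual value and strong duality:
\[
  \min_{\gamma\in\Pi(\mu,\nu)}\iint c\,d\gamma
  = \sup_{\varphi(x)+\psi(y)\le c(x,y)}\Big\{\int\varphi\,d\mu+\int\psi\,d\nu\Big\}.
\]
Moreover, under this hypothesis the supremum is attained by a pair of $c$-conjugate Kantorovich potentials: starting from any near-optimal admissible pair one replaces $\psi$ by the $c$-transform $\psi^c(x)=\inf_y\{c(x,y)-\psi(y)\}$ and iterates, an operation that only increases the dual objective while enforcing $\varphi\oplus\psi\le c$. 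Continuity of $\varphi,\psi$ (so that $\varphi,\psi\in C(\mathbb{R}^d)$) is inherited from that of $c$: the $c$-transform against a continuous cost is continuous, with a modulus of continuity controlled by that of $c$ on compacta.

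Finally I would read off the complementary slackness relation. Since $\gamma_{OT}\in\Pi(\mu,\nu)$ has marginals $\mu$ and $\nu$, strong duality rewrites the optimal value as
\[
  \iint c\,d\gamma_{OT}
  = \int\varphi\,d\mu+\int\psi\,d\nu
  = \iint\big(\varphi(x)+\psi(y)\big)\,d\gamma_{OT}(x,y).
\]
Hence $\iint\big(c(x,y)-\varphi(x)-\psi(y)\big)\,d\gamma_{OT}=0$, and as the integrand is nonnegative everywhere by admissibility, it vanishes $\gamma_{OT}$-almost surely, i.e. $\varphi(x)+\psi(y)=c(x,y)$ holds $\gamma_{OT}$-a.s., which is the first line of \eqref{main part:marginal optimal transport on P}.

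I expect the main obstacle to be the attainment and continuity of the Kantorovich potentials, rather than the existence of $\gamma_{OT}$ or the slackness step, both of which are routine. Specifically, one must verify that the $c$-transform iteration stabilizes at an admissible maximizing pair lying in $C(\mathbb{R}^d)$, and that $\int\varphi\,d\mu$ and $\int\psi\,d\nu$ are finite; this is exactly where $a\in L^1(\mu)$, $b\in L^1(\nu)$ enters, squeezing the potentials between an integrable function and the cost. For a fully rigorous treatment I would specialize the standard duality results of the optimal transport literature (as developed, e.g., in Ambrosio--Gigli--Savar\'e \cite{ambrosio2008gradient} and Villani's \emph{Optimal Transport: Old and New}) to the continuous, nonnegative cost $c(x,y)=h(x-y)$ considered here.
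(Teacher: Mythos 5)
Your proposal is correct and is essentially the same approach as the paper's: the paper gives no proof of this theorem at all, presenting it as a quoted result from Ambrosio--Gigli--Savar\'e \cite{ambrosio2008gradient}, and your sketch (existence by the direct method with Prokhorov compactness, dual attainment via $c$-transforms, then complementary slackness to localize $\gamma_{OT}$ on the contact set $\{\varphi\oplus\psi=c\}$) is precisely the classical Kantorovich-duality argument behind that citation, to which you also ultimately defer. The only step stated too casually is the continuity of the potentials: an infimum of continuous functions is in general only upper semicontinuous, and for $c(x,y)=|x-y|^2$ the modulus of continuity of $c(\cdot,y)$ on compacta is not uniform in $y$, so continuity of the $c$-transform genuinely requires the extra structure ($c$-concavity together with local boundedness, or for the quadratic cost the correspondence with convex functions) that the cited references supply --- you correctly flagged this as the main obstacle, so it is a deferral rather than an error.
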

Since we are using constrained Entropy Transport problem \eqref{constrained-ET} to approximate Optimal Transport problem \eqref{OT}, we are interested in comparing the difference between their optimal distributions $\gamma_{c ET}$ and $\gamma_{OT}$. Although we can identify their difference from marginal conditions \eqref{main part:marginal entropy transport on P} and \eqref{main part:marginal optimal transport on P}, we currently do not have a quantitative analysis on the difference between $\gamma_{cET}$ and $\gamma_{OT}$. This may serve as one of our future research directions.

Despite the discussions on  the constrained problem \eqref{constrained-ET} with fixed $\Lambda$, we also establish asymptotic results for \eqref{constrained-ET} with quadratic cost $c(x,y)=|x-y|^2$ as $\Lambda \rightarrow +\infty$. For the rest of this section, we define:
\begin{equation*}
   \mathcal{P}_2(X) = \left\{\gamma \Big| ~\gamma\in\mathcal{P}(X),\gamma\ll\mathscr{L}^{ m},~\int_{X} |x|^2d\gamma<+\infty \right\}
\end{equation*}
where $X=\mathbb{R}^{m}$ is any $m$ dimensional Euclidean space.
Let us now consider $\mathcal{P}_2(\mathbb{R}^d\times\mathbb{R}^d)$ and assume it is equipped with the topology of weak convergence. We are able to establish the following $\Gamma$-convergence results for the functional $\mathcal{E}_{\Lambda,\rm{KL}}( \cdot|\mu,\nu)$ defined on $\mathcal{P}_2(\mathbb{R}^d\times\mathbb{R}^d)$:
\begin{theorem}[$\Gamma$-convergence]\label{thmGamma}
Suppose $c(x,y)=|x-y|^2$. Assume that we are given $\mu,\nu\in\mathcal{P}_2(\mathbb{R}^d)$ and at least one of $\mu$ and $\nu$ satisfies the Logarithmic Sobolev inequality with constant $ K >0$. Let  $\{\Lambda_n\}$ be a positive increasing sequence, satisfying $\lim_{n\rightarrow \infty}\Lambda_n = +\infty$. We consider the sequence of functionals $\{ \mathcal{E}_{\Lambda_n,\rm{KL}}(\cdot|\mu,\nu) \}$. Recall the functional $\mathcal{E}_{\iota}(\cdot|\mu,\nu)$ defined in \eqref{iota ET functional}. Then $\{ \mathcal{E}_{\Lambda_n,\rm{KL}}(\cdot|\mu,\nu)\}$ $\Gamma$- converges to $\mathcal{E}_{\iota}(\cdot|\mu,\nu)$ on $\mathcal{P}_2(\mathbb{R}^d\times\mathbb{R}^d)$.
\end{theorem}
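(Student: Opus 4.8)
The plan is to verify the two defining inequalities of $\Gamma$-convergence directly: the \emph{liminf inequality} --- for every $\gamma\in\mathcal{P}_2(\mathbb{R}^d\times\mathbb{R}^d)$ and every sequence $\gamma_n\rightharpoonup\gamma$ one has $\mathcal{E}_\iota(\gamma|\mu,\nu)\le\liminf_n\mathcal{E}_{\Lambda_n,\rm{KL}}(\gamma_n|\mu,\nu)$ --- and the \emph{recovery (limsup) inequality} --- for every such $\gamma$ there is a sequence $\gamma_n\rightharpoonup\gamma$ with $\limsup_n\mathcal{E}_{\Lambda_n,\rm{KL}}(\gamma_n|\mu,\nu)\le\mathcal{E}_\iota(\gamma|\mu,\nu)$. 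Throughout I would exploit the decomposition of $\mathcal{E}_{\Lambda_n,\rm{KL}}$ into the nonnegative, lower semicontinuous cost term $\iint|x-y|^2\,d\gamma$ and the two nonnegative penalties $\Lambda_n D_{\rm{KL}}(\pi_{i\#}\gamma\|\cdot)$, together with the fact that $\mathcal{E}_\iota(\gamma|\mu,\nu)$ equals the transport cost when $\pi_{1\#}\gamma=\mu$ and $\pi_{2\#}\gamma=\nu$, and $+\infty$ otherwise.

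The recovery inequality is the easy half, and I would dispatch it with the constant sequence $\gamma_n\equiv\gamma$. If $\mathcal{E}_\iota(\gamma|\mu,\nu)=+\infty$ there is nothing to prove. Otherwise $\pi_{1\#}\gamma=\mu$ and $\pi_{2\#}\gamma=\nu$, so both KL penalties vanish identically in $n$ and $\mathcal{E}_{\Lambda_n,\rm{KL}}(\gamma|\mu,\nu)=\iint|x-y|^2\,d\gamma=\mathcal{E}_\iota(\gamma|\mu,\nu)$ for every $n$; finiteness of this cost is guaranteed by $\gamma\in\mathcal{P}_2(\mathbb{R}^d\times\mathbb{R}^d)$, since $|x-y|^2\le 2|(x,y)|^2$.

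The liminf inequality is where the work and the hypotheses sit. I would assume $L:=\liminf_n\mathcal{E}_{\Lambda_n,\rm{KL}}(\gamma_n|\mu,\nu)<+\infty$ (otherwise trivial) and pass to a subsequence realizing the liminf, along which $\mathcal{E}_{\Lambda_n,\rm{KL}}(\gamma_n|\mu,\nu)\le M$. Nonnegativity of the two summands then yields simultaneously a uniform cost bound $\iint|x-y|^2\,d\gamma_n\le M$ and the decay $D_{\rm{KL}}(\pi_{1\#}\gamma_n\|\mu)+D_{\rm{KL}}(\pi_{2\#}\gamma_n\|\nu)\le M/\Lambda_n\to0$, since $\Lambda_n\to+\infty$. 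The goal is to show the weak limit $\gamma$ has marginals $\mu$ and $\nu$. Assuming say $\mu$ satisfies the log-Sobolev inequality, the Otto--Villani theorem upgrades it to Talagrand's $T_2$ inequality $W_2(\pi_{1\#}\gamma_n,\mu)^2\le\frac{2}{K}D_{\rm{KL}}(\pi_{1\#}\gamma_n\|\mu)$, so the KL decay forces $W_2(\pi_{1\#}\gamma_n,\mu)\to0$, hence $\pi_{1\#}\gamma_n\rightharpoonup\mu$ with convergence of second moments; for the remaining marginal Pinsker's inequality already turns the KL decay into total-variation, hence weak, convergence $\pi_{2\#}\gamma_n\rightharpoonup\nu$. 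Since $\gamma_n\rightharpoonup\gamma$ forces $\pi_{i\#}\gamma_n\rightharpoonup\pi_{i\#}\gamma$, uniqueness of weak limits identifies $\pi_{1\#}\gamma=\mu$ and $\pi_{2\#}\gamma=\nu$. Finally, lower semicontinuity of $\gamma\mapsto\iint|x-y|^2\,d\gamma$ under weak convergence (the integrand being nonnegative and continuous) gives $\iint|x-y|^2\,d\gamma\le\liminf_n\iint|x-y|^2\,d\gamma_n\le L$, and because the marginals match, the left-hand side is exactly $\mathcal{E}_\iota(\gamma|\mu,\nu)$, completing the inequality.

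I expect the main obstacle to be the liminf direction, specifically the rigorous identification of the limiting marginals: one must argue that vanishing KL penalties, which only control the \emph{pushforward} marginals of $\gamma_n$, combine correctly with the weak convergence of the full plans $\gamma_n$ to pin down the marginals of $\gamma$. The log-Sobolev/Talagrand estimate is the quantitative device that makes this transfer clean and supplies the moment control so that $W_2$-convergence, not merely narrow convergence, is available for one marginal; verifying that the nonnegative quadratic cost is genuinely lower semicontinuous under narrow (rather than Wasserstein) convergence is the remaining technical point to handle with care.
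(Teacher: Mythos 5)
Your proof is correct, and while your recovery half (constant sequence, with the same two-case split) coincides with the paper's, your liminf argument takes a genuinely different route. The paper argues by cases on the limit point $\gamma$: if $\gamma$ satisfies both marginal constraints it drops the nonnegative KL terms and invokes convergence of the quadratic cost; if a constraint is violated, say $W_2(\pi_{1\#}\gamma,\mu)=\delta>0$, it combines the $W_2$ triangle inequality with Talagrand's inequality (from log-Sobolev) to force $D_{\rm{KL}}(\pi_{1\#}\gamma_n\|\mu)\geq K\delta^2/8$ for large $n$, whence $\mathcal{E}_{\Lambda_n,\rm{KL}}(\gamma_n|\mu,\nu)\geq \Lambda_n K\delta^2/8\to+\infty$. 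You argue in the contrapositive direction: finiteness of the liminf yields a uniform energy bound along a subsequence, hence KL decay at rate $M/\Lambda_n$, from which you identify the marginals of the weak limit --- Talagrand for the LSI marginal, Pinsker for the other --- and conclude by weak lower semicontinuity of the cost. The two arguments are morally contrapositives of each other, but your Pinsker step is a substantive improvement: the paper's ``without loss of generality'' silently assumes that the violated constraint is the one whose target measure satisfies the log-Sobolev inequality, which is the only case its Talagrand-only argument covers, whereas your total-variation bound handles the non-LSI marginal as well (and, applied to both marginals, it shows the $\Gamma$-convergence statement actually needs no LSI hypothesis at all --- LSI is genuinely needed only for the equi-coercivity in the companion Theorem \ref{main thm B}). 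A further point in your favor is that your argument uses only narrow convergence and lower semicontinuity, so it is valid whether $\mathcal{P}_2(\mathbb{R}^d\times\mathbb{R}^d)$ carries the weak topology, as the theorem states, or the $W_2$ topology, which the paper's violated-constraint case implicitly invokes when it asserts $W_2(\gamma,\gamma_n)\leq\delta/2$. The only redundancy on your side is cosmetic: the second-moment control you extract from Talagrand is never used, since weak convergence of the marginals suffices both for their identification and for the semicontinuity step.
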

We can further establish the equi-coercive property for the family of functionals $\{\mathcal{E}_{\Lambda_n,\rm{KL}}(\cdot|\mu,\nu)\}_n$ and we use the Fundamental Theorem of $\Gamma$-convergence \cite{dal2012introduction} \cite{braides2006handbook} to establish the following asymptotic results:
\begin{theorem}[Property of $\Gamma$-convergence]\label{main thm B}
Suppose $c(x,y)=|x-y|^2$. Assuming $\mu,\nu\in\mathcal{P}_2(\mathbb{R}^d)$ and both $\mu, \nu$ satisfy the Logarithmic Sobolev inequality with constants $ K_\mu,K_\nu>0$. According to Corollary \ref{thm existence uniqueness sol }, problem \eqref{constrained-ET} with functional $\mathcal{E}_{\Lambda_n,\rm{KL}}(\cdot | \mu,\nu   )$
admits a unique optimal solution, let us denote it as $\gamma_n$. According to Theorem \ref{uniqueness optimal distribution OT}, the Optimal Transport problem \eqref{OT} also admits a unique solution, we denote it as $\gamma_{OT}$. Then: $\lim_{n\rightarrow \infty}\gamma_n = \gamma_{OT}$ in $\mathcal{P}_2(\mathbb{R}^d\times \mathbb{R}^d)$.
\end{theorem}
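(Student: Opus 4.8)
The plan is to invoke the Fundamental Theorem of $\Gamma$-convergence, whose conclusion is precisely that when an equi-coercive sequence of functionals $\Gamma$-converges to a limit $F$, then $F$ attains its minimum, $\min F=\lim_n\inf\mathcal{E}_{\Lambda_n,\rm{KL}}$, and every cluster point of any sequence of minimizers of $\mathcal{E}_{\Lambda_n,\rm{KL}}$ is a minimizer of $F$. Theorem \ref{thmGamma} already supplies the $\Gamma$-convergence of $\{\mathcal{E}_{\Lambda_n,\rm{KL}}(\cdot|\mu,\nu)\}$ to $\mathcal{E}_{\iota}(\cdot|\mu,\nu)$ in the weak topology of $\mathcal{P}_2(\mathbb{R}^d\times\mathbb{R}^d)$, so only two ingredients remain: (i) equi-coercivity of the family $\{\mathcal{E}_{\Lambda_n,\rm{KL}}\}_n$, and (ii) identification of the unique minimizer of $\mathcal{E}_{\iota}$ with $\gamma_{OT}$. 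Once both are in place, the uniqueness of the limiting minimizer upgrades the statement ``every cluster point is a minimizer'' into convergence of the entire sequence.

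First I would establish equi-coercivity, i.e. that for each level $t$ the union over $n$ of the sublevel sets $\{\gamma:\mathcal{E}_{\Lambda_n,\rm{KL}}(\gamma|\mu,\nu)\leq t\}$ is relatively compact in the weak topology, uniformly in $n$. Fix such a $\gamma$. Since the two $D_{\rm{KL}}$ terms are nonnegative, $\iint|x-y|^2\,d\gamma\leq t$, while dividing by $\Lambda_n$ yields $D_{\rm{KL}}(\pi_{1\#}\gamma\|\mu)\leq t/\Lambda_n$ and $D_{\rm{KL}}(\pi_{2\#}\gamma\|\nu)\leq t/\Lambda_n$. This is where the Logarithmic Sobolev hypotheses enter: by the Otto--Villani theorem a measure satisfying LSI with constant $K$ also satisfies the Talagrand $T_2$ transport inequality $W_2^2(\rho,\mu)\leq \tfrac{2}{K}D_{\rm{KL}}(\rho\|\mu)$. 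Hence $W_2^2(\pi_{1\#}\gamma,\mu)\leq \tfrac{2t}{K_\mu\Lambda_n}$ and $W_2^2(\pi_{2\#}\gamma,\nu)\leq \tfrac{2t}{K_\nu\Lambda_n}$, both bounded uniformly in $n$ since $\Lambda_n\geq\Lambda_1>0$. Because $\mu,\nu\in\mathcal{P}_2(\mathbb{R}^d)$, the $W_2$-triangle inequality bounds the second moments of both marginals uniformly, so $\int(|x|^2+|y|^2)\,d\gamma$ is uniformly bounded (one could in fact control only one marginal through LSI and recover the other via $|y|^2\le 2|x|^2+2|x-y|^2$ and the cost bound); Markov's inequality then gives tightness, and Prokhorov's theorem furnishes the desired relative compactness independent of $n$.

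Next I would identify the $\Gamma$-limit's minimizer. As noted after \eqref{iota ET functional}, the functional $\mathcal{E}_{\iota}$ encodes the hard marginal constraints, so minimizing it over $\mathcal{P}_2(\mathbb{R}^d\times\mathbb{R}^d)$ is equivalent to the Optimal Transport problem \eqref{OT}, whose solution is the unique $\gamma_{OT}$ of Theorem \ref{uniqueness optimal distribution OT} (assumptions (A),(B) hold here because $h(z)=|z|^2$ is strictly convex and $\mu,\nu\ll\mathscr{L}^d$). To apply the Fundamental Theorem to the concrete sequence $\gamma_n$, I would use the uniform energy bound $\inf\mathcal{E}_{\Lambda_n,\rm{KL}}\leq\mathcal{E}_{\Lambda_n,\rm{KL}}(\gamma_{OT}|\mu,\nu)=W_2^2(\mu,\nu)$, obtained by testing with $\gamma_{OT}$, whose marginals are exactly $\mu,\nu$ so that both $D_{\rm{KL}}$ terms vanish. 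Thus every $\gamma_n$ lies in the sublevel set $\{\mathcal{E}_{\Lambda_n,\rm{KL}}\leq W_2^2(\mu,\nu)\}$, which by the equi-coercivity step sits inside a single weakly compact set; hence $\{\gamma_n\}$ is precompact, each of its cluster points minimizes $\mathcal{E}_{\iota}$ and therefore equals $\gamma_{OT}$, and so the whole sequence converges to $\gamma_{OT}$.

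The main obstacle is the equi-coercivity step, and within it the passage from a vanishing Kullback--Leibler bound on the marginals to genuine weak precompactness of $\gamma$ on the product space. This is exactly where the Logarithmic Sobolev assumption is indispensable: it is the Otto--Villani implication LSI $\Rightarrow T_2$ that converts an entropy bound into a quantitative $W_2$, hence second-moment, hence tightness bound on the marginals, uniformly in $n$. I would therefore devote most care to verifying the Otto--Villani hypotheses in this setting and to checking that controlling the marginals together with the cost term indeed controls the full second moment $\int|z|^2\,d\gamma$, so that Prokhorov's theorem applies in the stated weak topology.
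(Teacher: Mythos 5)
Your proposal is correct and follows essentially the same route as the paper's proof: $\Gamma$-convergence (Theorem \ref{thmGamma}) plus equi-coercivity obtained from the Logarithmic Sobolev / Talagrand $T_2$ inequality, a uniform energy bound on the minimizers from a fixed test plan, and the Fundamental Theorem of $\Gamma$-convergence combined with uniqueness of $\gamma_{OT}$ to upgrade subsequential convergence to convergence of the whole sequence. The only cosmetic differences are that the paper proves tightness of the $W_2$-sublevel sets by a direct ball-and-coupling estimate while you use second-moment bounds with Markov's inequality, and the paper tests with $\mu\otimes\nu$ where you test with $\gamma_{OT}$; both choices work equally well.
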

Detailed proofs of these theorems regarding $\Gamma$-convergence are provided in Appendix\ref{App a3}.

\section{Wasserstein Gradient Flow Approach for Solving the Regularized Problem} \label{flow of ET}
\subsection{Wasserstein gradient flow}\label{Wass mfld and Wass grad flow}
 There are already numerous researches \cite{jordan1998variational,otto2001,ambrosio2008gradient} regarding Wasserstein gradient flows of different types of functionals defined on the Wasserstein manifold-like structure $(\mathcal{P}_2(\mathbb{R}^d),g^W)$ that successfully relate certain kinds of time evolution Partial Differential Equations (PDEs) to the manifold gradient of corresponding functionals on $(\mathcal{P}_2(\mathbb{R}^d), g^W   )$. The Wasserstein manifold-like structure is the manifold $\mathcal{P}_2(\mathbb{R}^d)$ equipped with a special metric $g^W$ induced by the 2-Wasserstein distance. Under this setting, the Wasserstein gradient flow of a certain functional $\mathcal{F}$ defined on $\mathcal{P}_2(\mathbb{R}^d)$ can thus be formulated as:
 \begin{equation}
    \frac{\partial\gamma_t}{\partial t} = -\textrm{grad}_W \mathcal{F}(\gamma_t) 
    \label{Wasserstein Gradient Flow}
 \end{equation}
 One can explain this equation \eqref{Wasserstein Gradient Flow} as the continuous steepest descent algorithm applied to $\mathcal{F}$ in order to determine the minimizer of the target functional $\mathcal{F}$. For more detailed information regarding Wasserstein manifold-like structure and Wasserstein gradients, please check Appendix \ref{App b1}.

 \subsection{Wasserstein gradient flow of Entropy Transport functional}\label{sec3.2}
 We now come back to our constrained entropy transport problem \eqref{constrained-ET}. There are mainly two reasons why we choose to compute the Wasserstein gradient flow of functional $\mathcal{E}_{\Lambda,\rm{KL}}( \cdot |\mu,\nu )$:
 \begin{itemize}
   \item Computing the Wasserstein gradient flow is equivalent to applying gradient descent to determine the minimizer of the entropy transport functional \eqref{KL ET functional};
   \item In most of the cases, Wasserstein gradient flows can be realized as a time evolution PDE describing the density evolution of a stochastic process. As a result, once we derived the gradient flow, there will be a natural particle version associated to the gradient flow. And this will make the computation of gradient flow tractable since we can evolve the particle system by applying the forward Euler scheme.
 \end{itemize}
 Now let us compute the Wasserstein gradient flow of $\mathcal{E}_{\Lambda,\rm{KL}}(\cdot|\mu,\nu)$:
 \begin{equation}
    \frac{\partial\gamma_t }{\partial t}=-\textrm{grad}_W\mathcal{E}_{\Lambda,\rm{KL}}(\gamma_t|\mu,\nu) , ~~ ~   \gamma_t|_{t=0}=\gamma_0 \label{wass grad flow ET functional }
 \end{equation}
 To keep our notations concise, we denote $\rho(\cdot,\cdot,t) = \frac{d\gamma_t}{d\mathscr{L}^{2d}}$ $\varrho_1 = \frac{d\mu}{d\mathscr{L}^d}$, $\varrho_2 = \frac{d\nu}{d\mathscr{L}^d}$, we can show that the previous equation \eqref{wass grad flow ET functional } can be written as:
  \begin{equation}
   \frac{\partial \rho}{\partial t}=\nabla\cdot(\rho~\nabla (c(x,y)+ \Lambda \log(\frac{\rho_1(x,t)}{\varrho_1(x)}) + \Lambda \log(\frac{\rho_2(y,t)}{\varrho_2(y)}) )) 
  \label{PDE vers}
 \end{equation}
 Here $\rho_1(\cdot,t) = \frac{d\pi_{1 \#}\gamma_t}{d\mathscr{L}^d} =\int\rho(\cdot,y,t)dy$ and $\rho_2(\cdot,t) = \frac{d\pi_{2   \#}\gamma_t}{d\mathscr{L}^d}=\int\rho(x,\cdot,t)dx$ are density functions of marginals of $\gamma_t$. We put the details of our derivation in 
 Appendix \ref{App b2}.
 \begin{remark}\label{no displacement convexity}
   We are currently not clear about the displacement convexity of the functional $\mathcal{E}_{\Lambda, \rm{KL}  }(\cdot | \mu,\nu   )$ on Wasserstein manifold-like structure $(\mathcal{P}_2(\mathbb{R}^d\times\mathbb{R}^d),g^W)$, which will guarantee its gradient flow to converge at its minimizer. This will be one of our future research directions . In practice, we should rely on the computational results to tell us whether our method works properly.
 \end{remark}
 
 \subsection{Particle formulation of our derived gradient flow} \label{sectionSDE}
 Let us treat \eqref{PDE vers} as certain kind of continuity equation, i.e. we treat $\rho(\cdot,t)$ as the density of the time-evolving random particles. Then the vector field that drives the random particles at time $t$ should be $-\nabla(c(x,y)+\Lambda\log\left(\frac{\rho_1(x,t)}{\varrho_1(x)}\right)+\Lambda\log\left(\frac{\rho_2(y,t)}{\varrho_2(y)}\right))$. This helps us design the following dynamics $\{(X_t,Y_t)\}_{t\geq 0}$: (here $\dot X_t$ denotes the time derivative $\frac{d X_t}{dt}$)
 \begin{equation}
 \label{SDE version}
    \begin{cases}
    \dot X_t= -\nabla_x c(X_t,Y_t) + \Lambda (\nabla\log\varrho_1(X_t) -\nabla\log \rho_1(X_t,t)) ;\\
    \dot Y_t= -\nabla_y c(X_t,Y_t) + \Lambda (\nabla\log\varrho_2(Y_t) -\nabla\log \rho_2(Y_t,t)) ;
    \end{cases} 
 \end{equation}
 where $\textrm{Law}(X_0,Y_0) =  \gamma_0 $. Here $\rho_1(\cdot,t)$ is the density of $\textrm{Law}(X_t)$ and $\rho_2(\cdot,t)$ is the density of $\textrm{Law}(Y_t)$.
 If we assume the process \eqref{SDE version} is well-defined, then the probability density $\rho_t(x,y)$ of $(X_t,Y_t)$ should solve the PDE \eqref{PDE vers}.

 When we take a closer look at \eqref{SDE version}, we can verify that the movement of particle $(X_t,Y_t)$ at certain time $t$ depends on the probability density of $\textrm{Law}((X_t,Y_t))$ at $(X_t,Y_t)$, which can be approximated by the distribution of the surrounding particles near $(X_t, Y_t)$.
 Such equation \eqref{PDE vers} can be treated as 
 a limit case of aggregation-diffusion equation \cite{carrillo2019aggregation,carrillo2019blob} with Dirac kernel convolution.
Generally speaking, we plan to evolve \eqref{SDE version} as a particle aggregation model in order to converge to a sample-wised approximation of the Optimal Transport plan $\gamma_{OT}$ for OT problem \eqref{OT}.
 
 We expect that the distribution $\textrm{Law}((X_t,Y_t))$ converges to $\gamma_{ cET} $ as $t\rightarrow\infty$, here $\gamma_{ cET }  $ is the minimizer of functional $\mathcal{E}_{\Lambda,\rm{KL}}(\cdot|\mu,\nu)$ on $\mathcal{P}(\mathbb{R}^d\times\mathbb{R}^d)$. Recall the discussion in section \ref{subsec: PET}, $\gamma_{cET}   $ can be treated as an approximation to the solution of OT problem \eqref{OT}.
 
 In conclusion, evolving \eqref{SDE version} as an interacting particle system will provide a particle aggregation method for computing the sample-wise approximation to the Optimal Transport problem \eqref{OT}.

\section{Algorithmic Development}
\subsection{Numerical Approximation via Kernel Method}
To use the Euler scheme to simulate the stochastic process \eqref{SDE version}, we have to find a numerical approximation for the term $\nabla \log \rho(x)$. Here we use the Kernel Density Estimation \cite{parzen1962estimation} to approximate the density by convolving with kernel $K(x,\xi)$. In this paper, we simply choose the Radial Basis Function (RBF) kernel:
\begin{equation*}
  K(x,\xi)=\exp\left(-\frac{|x-\xi|^2}{2\tau^2}\right)
\end{equation*} 
Then we approximate $\nabla\log\rho$ by:
\begin{equation}
\begin{split}
  \nabla\log\rho(x)\approx\nabla\log(K*\rho)(x)=\frac{(\nabla_x K)*\rho(x)}{K*\rho(x)} \label{kernel_approximation}
  \end{split}
\end{equation}
Here $K*\rho(x) = \int K(x,\xi)\rho(\xi)d\xi$, $(\nabla_xK)*\rho(x)=\int \nabla_x K(x,\xi)\rho(\xi)d\xi$\footnote{Notice that we always use $\nabla_x K$ to denote the partial derivative of $K$ with respect to the first components.}. Such technique is also known as blobing method, which was first studied in \cite{carrillo2019blob} and has already been applied to Bayesian sampling \cite{chen2018unified}. With this reformulation, we  can evaluate the gradient log density function based on the locations of the particles: 
\begin{equation*}
    \frac{\mathbb{E}_{\xi\sim\rho}\nabla_x K(x,\mathbf{\xi})}{\mathbb{E}_{\xi\sim\rho}K(x,\xi)}\approx\frac{\sum_{k=1}^N\nabla_x K(x,\xi_k)}{\sum_{k=1}^N K(x,\xi_k)} ~~~\xi_1,...,\xi_N, \textrm{i.i.d.}\sim\rho
\end{equation*}

With the help of this method, we are able to construct the following particle system involving $N$ particles $\{(X_i , Y_i)\}_{i=1,...,N}$. For the $i$-th particle, we have:
\begin{equation}
\left\{
\begin{aligned}
\dot X_i(t) &= -\nabla_x c(X_i(t),Y_i(t))   - \Lambda \left(\nabla V_1(X_i(t)) +  \frac{\sum_{k=1}^N\nabla_x K(X_i(t),X_k(t))}{\sum_{k=1}^N K(X_i(t),X_k(t))} \right) \\ 
\dot Y_i(t) &= -\nabla_y c(X_i(t),Y_i(t))   - \Lambda \left(\nabla V_2(Y_i(t)) +  \frac{\sum_{k=1}^N\nabla_x K(Y_i(t),Y_k(t))}{\sum_{k=1}^N K(Y_i(t),Y_k(t))} \right)
\end{aligned}
\right. \label{num_particle_sys}
\end{equation}
Here we denote $V_1=-\log\varrho_1$, $V_2=-\log\varrho_2$. Since we only need the gradients of $V_1,V_2$ , our algorithm can deal with unnormalized continuous probability measures. 
We numerically verify that when $t\rightarrow \infty$, the empirical distribution $\frac{1}{N}\sum_{i=1}^N\delta_{(X_i(t),Y_i(t))}$ will converge to the optimal distribution $\gamma_{ cET}$ that solves \eqref{constrained-ET} with sufficient large $N$ and $\Lambda$, while the rigorous proof is reserved for our future work. 

\subsection{More Computational Efficiency with Random Batch Methods }
Taking closer look at the equation \eqref{num_particle_sys}, we can see the main computational efforts are put into approximating the gradient log density function. In each time step, the computational cost is of the order $\mathcal{O}(n^2)$. Inspired by \cite{jin2020random}, we apply the Random Batch Methods (RBM) here to reduce the computational cost. Assume that we have $n$ particles in the system and we can divide all particles into $m$ batches equally. Then in each iteration, we only consider the particles in the same batch as the particle $X_i$ when we evaluate the $\nabla \log \rho(X_i)$. From simple analysis we know that the computational cost now is reduced to the order $\mathcal{O}(n^2/m)$, which is a significant improvement. With proper choice of batch size, we can still get reasonable approximation but spend much less computational efforts. The algorithm scheme is summarized in the algorithm \ref{alg:alg_par_sys}. 

\subsection{Extention to Wasserstein Barycenter Problem}
Our framework can be extend to multi-marginal problems. Suppose we have $m$ marginal distributions $\mu_1,...,\mu_m$ 
with cost function $c(x_1,...,x_m)$. The general multi-marginal problem \cite{gangbo1998optimal} can be formulated as:
\begin{equation}
 \inf_{\gamma\in\Pi(\mu_1,...,\mu_m)}\left\{\int_{\mathbb{R}^{m\times d}}c(x_1,...,x_m)~d\gamma(x_1,...,x_m)\right\}\label{multi marginal problem}
\end{equation}
Here $\Pi(\mu_1,...,\mu_m)$ denotes the set of $\gamma\in\mathcal{P}(\mathbb{R}^{m\times d})$ with its $m$ marginals equal to $\mu_1,...\mu_m$. To deal with \eqref{multi marginal problem}, we extend the entropy transport functional \eqref{KL ET functional} to:
\begin{align}
  \mathcal{E}(\gamma|\mu_1,...,\mu_m) \label{multi marginal functional} 
  = \int_{\mathbb{R}^{m d}} c(x_1,...,x_m)
  d\gamma(x_1...x_m) + \sum_{j=1}^m \Lambda_j D_{\rm{KL}}(\gamma_j\|\mu_j)
\end{align}
Where $\gamma_j = \pi_{j\#}\gamma$. It is natural to extend the constrained Entropy Transport problem \eqref{constrained-ET} to the problem: 
\begin{equation*}
  \min_{\gamma\in\mathcal{P}(\mathbb{R}^{m \times d})} \mathcal{E}(\gamma|\mu_1,...,\mu_m)
\end{equation*}
Similar to the two marginals case, we can derive the Wasserstein flow of functional \eqref{multi marginal functional} on $\mathcal{P}_2(\mathbb{R}^{m\times d})$ and compute its  corresponding particle flow in order to evaluate an approximation to the optimal solution of \eqref{multi marginal problem}.

We now consider applying our particle flow algorithm to Wasserstein barycenter problem \cite{agueh2011barycenters}\cite{cuturi2014fast}, which can be treated as a specific multi-marginal problem. This  barycenter problem is formulated as:
\begin{equation}
\min_{\mu\in\mathcal{P}_2(\mathbb{R}^d)} \sum_{i=1}^m \lambda_i W_2^2(\mu,\mu_i)\label{barycenter original}
\end{equation}
Here $\lambda_i>0$ are the weights. The barycenter problem has an equivalent multi-marginal formulation.  
We consider the following multi-marginal problem:
\begin{equation}
  \min_{ \gamma\in\Pi(\mu_1,...,\mu_m)
  } \left\{\int_{\mathbb{R}^{(m+1)\times d}} \sum_{i=1}^m \lambda_i|x-x_i|^2~d\gamma(x,x_1,...,x_m) \right\} \label{barycenter multimargin}
\end{equation}
Notice that there is no marginal constraint for $\gamma$ on the 0-th "$x$" component. \eqref{barycenter original} and \eqref{barycenter multimargin} are equivalent\cite{agueh2011barycenters} in the following sense : if  $\bar{\mu}$ and $\bar{\gamma}$ are the optimal solutions to \eqref{barycenter original} and \eqref{barycenter multimargin}, then $\pi_{0\#}\bar{\gamma}=\bar{\mu}$.

We now apply our scheme to solve \eqref{barycenter multimargin}. We need to consider the functional:
\begin{align*}
  \mathcal{E}(\gamma|\mu_1,..,\mu_m)=\int_{\mathbb{R}^{(m+1)\times d}} \sum_{j=1}^{m} \lambda_j|x-x_j|^2 ~ d\gamma(x,x_1,...,x_m) +\sum_{j=1}^m \Lambda_j D_{\rm{KL}}(\gamma_j\|\mu_j)
\end{align*}
where $\gamma_j = \pi_{j \#}\gamma$, $j=1,...,m$. The particle system of the Wasserstein gradient flow of this functional can be written as:
\begin{align*}
\begin{cases} 
\dot X^{(0)}_i(t) &= \sum_{j=1}^{m} -2\lambda_j(X_i^{(0)}(t)-X_i^{(j)}(t))\\
\dot X^{(1)}_i(t) &= -\nabla_{x_1} d(X^{(0)}_i(t), X^{(1)}_i(t)) 
\quad + \Lambda_1 \left( \nabla_x \log\varrho_1(X^{(1)}_i(t)) \right. \\ 
&\quad\left.+ \frac{\sum_{k=1}^N\nabla_\xi K(X^{(1)}_i(t),X^{(1)}_k(t))}{\sum_{k=1}^N K(X^{(1)}_i(t),X^{(1)}_k(t))}\right) \\ 
&...\\
\dot X^{(m)}_i(t) &= -\nabla_{x_m} d(X^{(0)}_i(t),X^{(m)}_i(t))
\quad + \Lambda_m \left(\nabla_x \log\varrho_m(X^{(m)}_i(t))\right.\\
&\quad\left.+\frac{\sum_{k=1}^N\nabla_\xi K(X^{(m)}_i(t),X^{(m)}_k(t))}{\sum_{k=1}^N K(X^{(m)}_i(t),X^{(m)}_k(t))}\right)
\end{cases}  
\label{bary center dynamical_system_marginal_density}
\end{align*}
here $i$ goes from $1$ to $N$ . There are $N$ particles $\{(X_i^{(0)},X_i^{(1)},...,X_i^{(m)})\}_{i=1,...,N}$ evolving in $\mathbb{R}^{(m+1)\times d}$ together. As $t\rightarrow \infty$, the empirical distribution of particles $\{X^{(0)}_i\}_{i=1,...,N}$ are expected to be an approximation of barycenter $\bar{\mu}$ of distributions $\mu_1,...,\mu_m$.

\begin{algorithm}[tb]
\caption{Random Batch Particle Evolution Algorithm}
\label{alg:alg_par_sys}
\begin{algorithmic}
   \STATE {\bfseries Input:} The density functions of the marginals $\varrho_1, \varrho_2$, timestep $\Delta t$, total number of iterations $T$, parameters of the chosen kernel $K$
   \STATE {\bfseries Initialize:} The initial locations of all particles $X_i(0)$ and $Y_i(0)$ where $i=1,2,\cdots,n$, 
   \FOR{t\,=\,1,2,$\cdots$,T}
   \STATE Shuffle the particles and divide them into $m$ batches: $\mathcal{C}_1,\cdots,\mathcal{C}_m$
    \FOR{each batch $\mathcal{C}_q$}
    \STATE Update the location of each particle $(X_i,Y_i)$ ($i\in \mathcal{C}_q$)
    \begin{align*}
    \boldsymbol{X}_i &\leftarrow\boldsymbol{X}_i - \Delta t[~ \nabla_x c(X_i(t),Y_i(t)) + \Lambda \nabla V_1(X_i(t))
              +\Lambda \frac{\sum_{k\in \mathcal{C}_q}\nabla_x K(X_i(t),X_k(t))}{\sum_{k\in \mathcal{C}_q} K(X_i(t),X_k(t))} ~] \\ 
    \boldsymbol{Y}_i &\leftarrow\boldsymbol{Y}_i - \Delta t[~ \nabla_y c(X_i(t),Y_i(t)) + \Lambda \nabla V_2(Y_i(t)) 
              +\Lambda \frac{\sum_{k\in \mathcal{C}_q}\nabla_x K(Y_i(t),Y_k(t))}{\sum_{k\in \mathcal{C}_q} K(Y_i(t),Y_k(t))} ~]
    \end{align*}
    \ENDFOR
   \ENDFOR
   \STATE {\bfseries Output:} A sample approximation of the optimal coupling: $X_i(T), Y_i(T)$ for $i=1,2,\cdots,n$
\end{algorithmic}
\end{algorithm}

\section{Numerical Experiments}
In this section, we test our algorithm on several toy examples. All experiments are conducted on a machine with 2.20GHz CPU, 16GB of memory. 

{\bf 1D Gaussian} We set two 1D Gaussian distributions $\mathcal{N}(x;-4,1), \mathcal{N}(x;6,1)$ as marginals and run the algorithm to compute the sample approximation of the optimal transport plan between them. We set $\lambda=200, \Delta t=0.001$ and run it with 1000 particles $(X_i,Y_i)$'s for 1000 iterations. We initialize the particles by drawing 1000 i.i.d. sample points from $\mathcal{N}(x;-20,4)$ as $X_i$'s and 1000 i.i.d. sample points from $\mathcal{N}(x;20,2)$ as $Y_i$'s. The empirical results are shown in figure \ref{fig:1dGauss_marg} and figure \ref{fig:1dGauss_map}. We can see that after 1000 iterations, we get a good sample approximation of the optimal transport plan. 

\begin{figure} 
\centering
{\scriptsize 
\begin{tabular}{ccc}
\includegraphics[width=.28\textwidth]{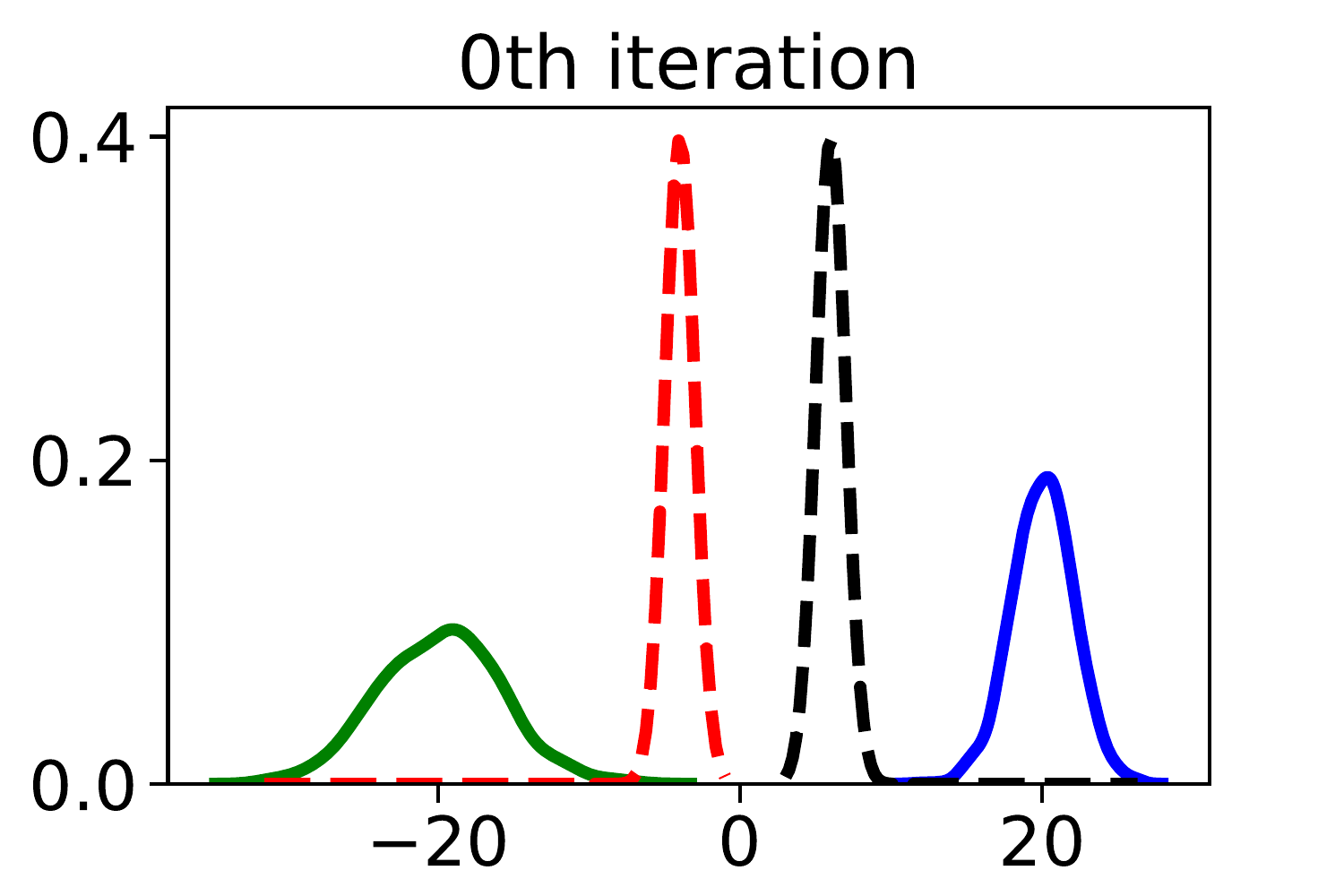} &
\includegraphics[width=.28\textwidth]{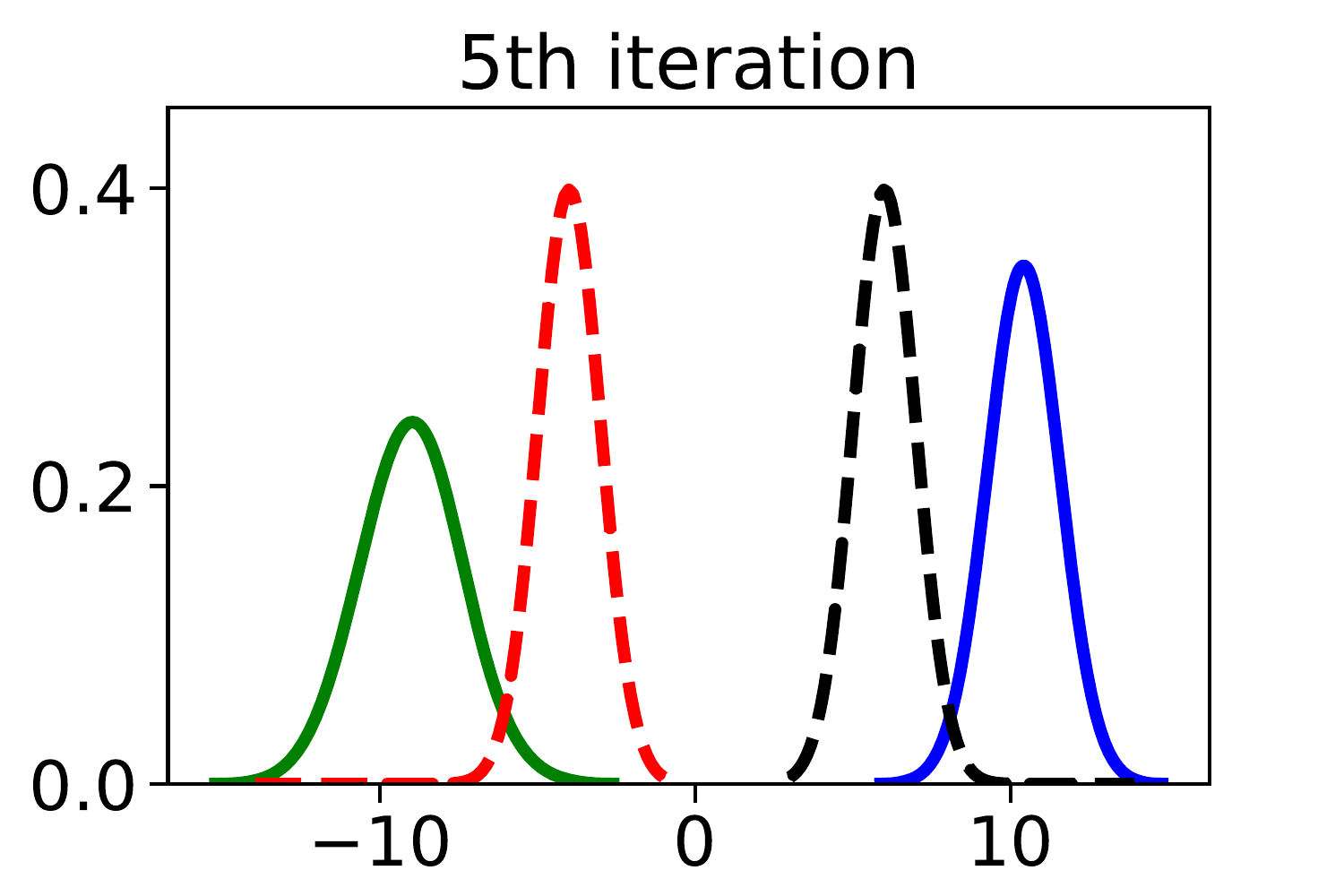} &
\includegraphics[width=.28\textwidth]{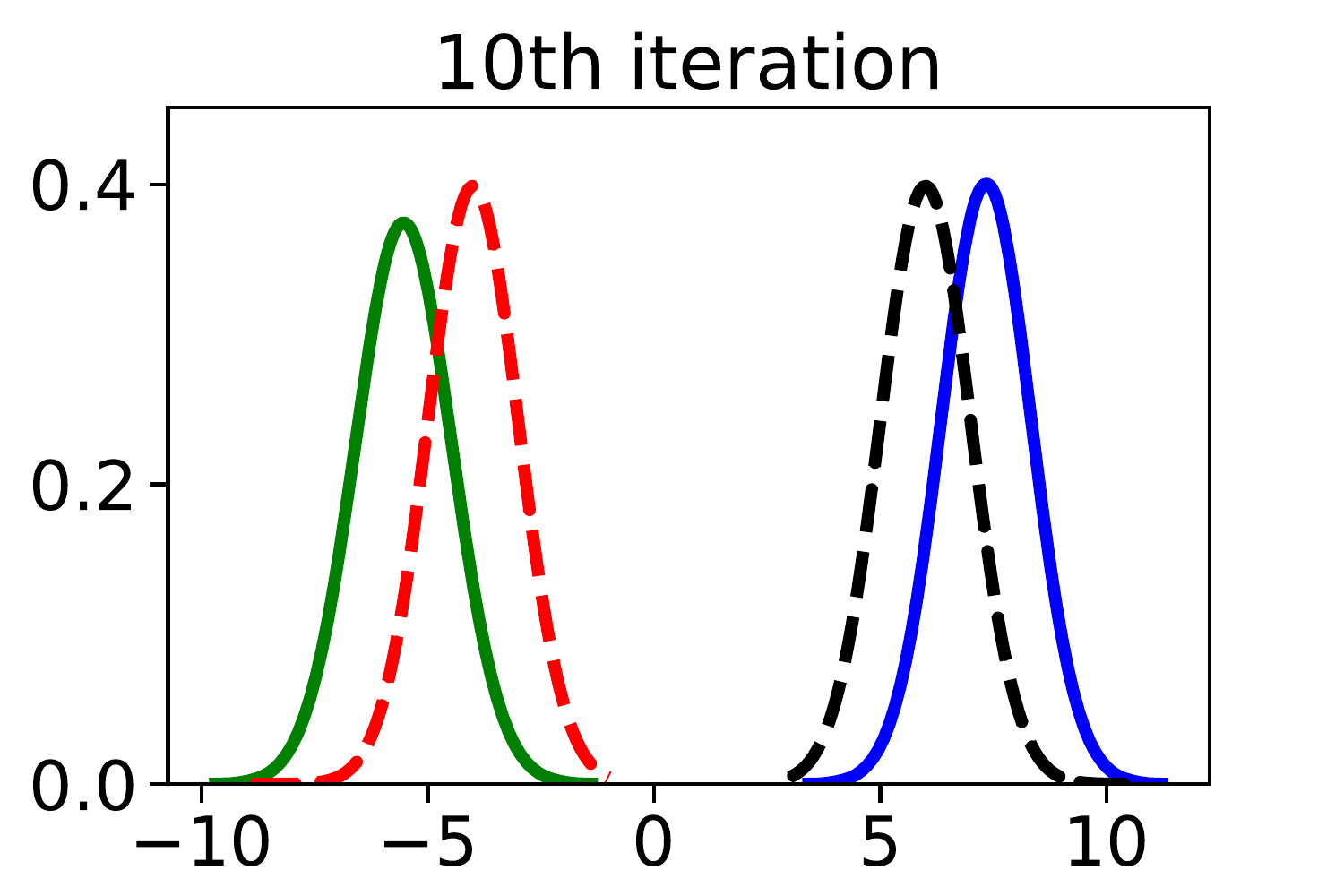}\\
\includegraphics[width=.28\textwidth]{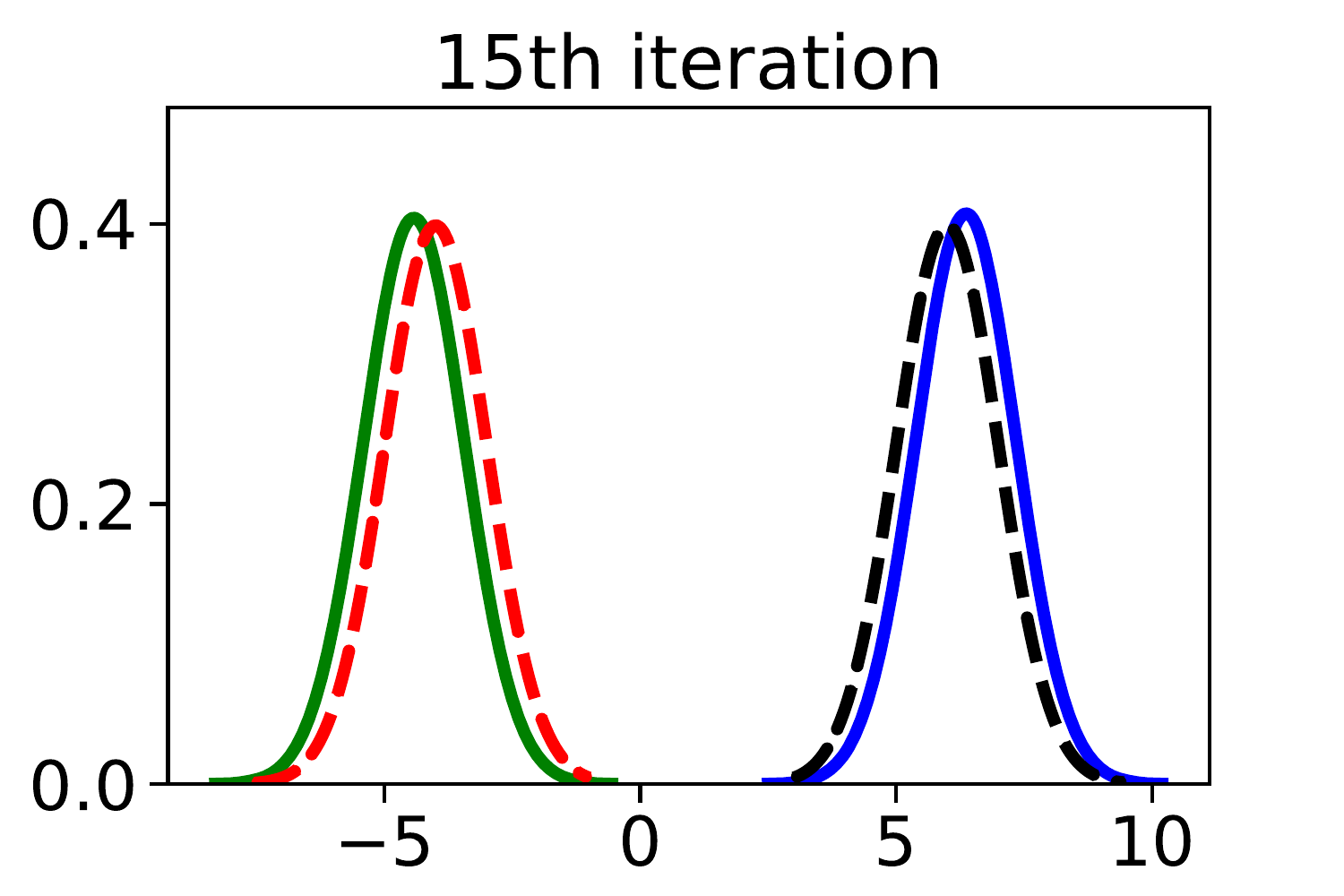} &
\includegraphics[width=.28\textwidth]{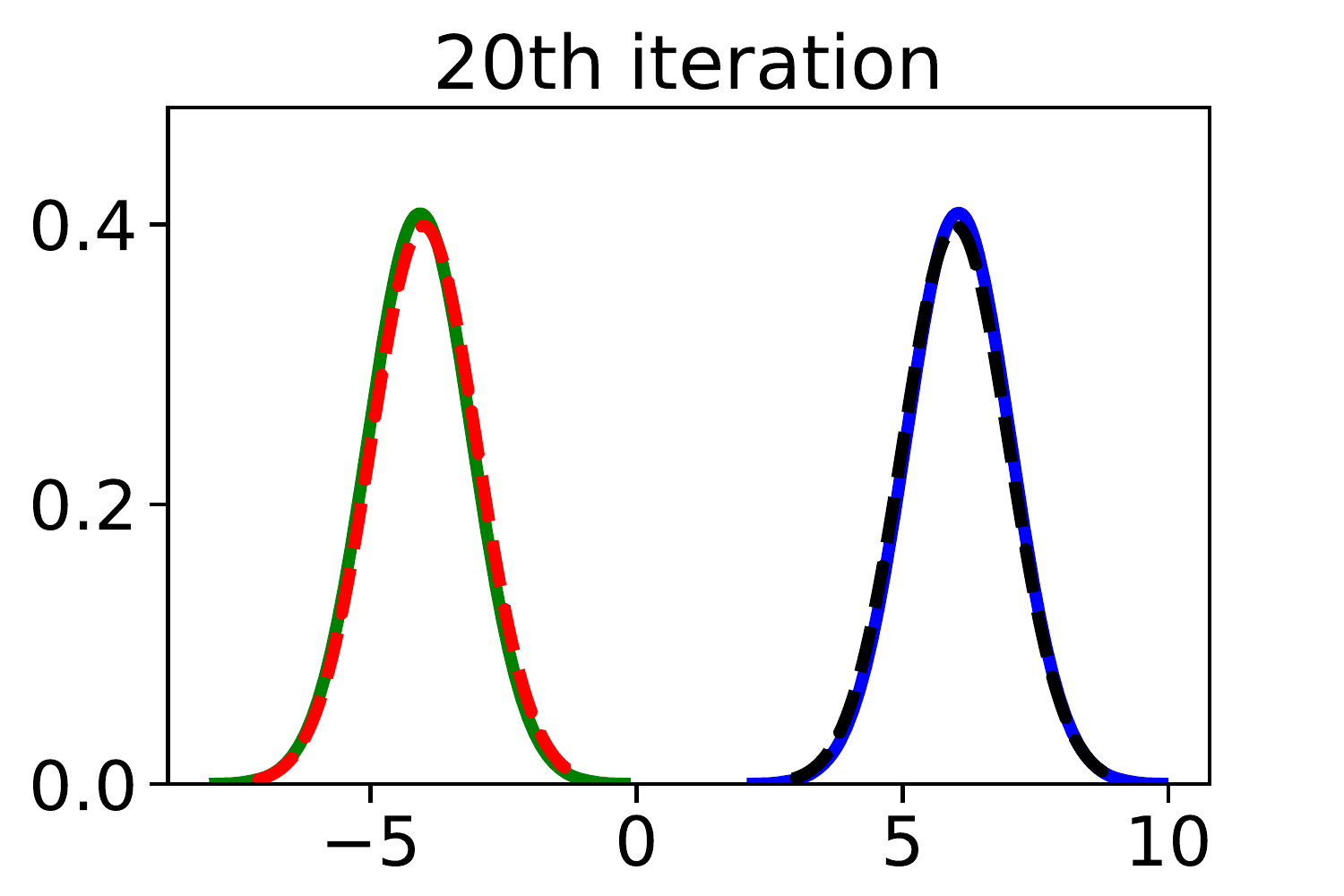} &
\includegraphics[width=.28\textwidth]{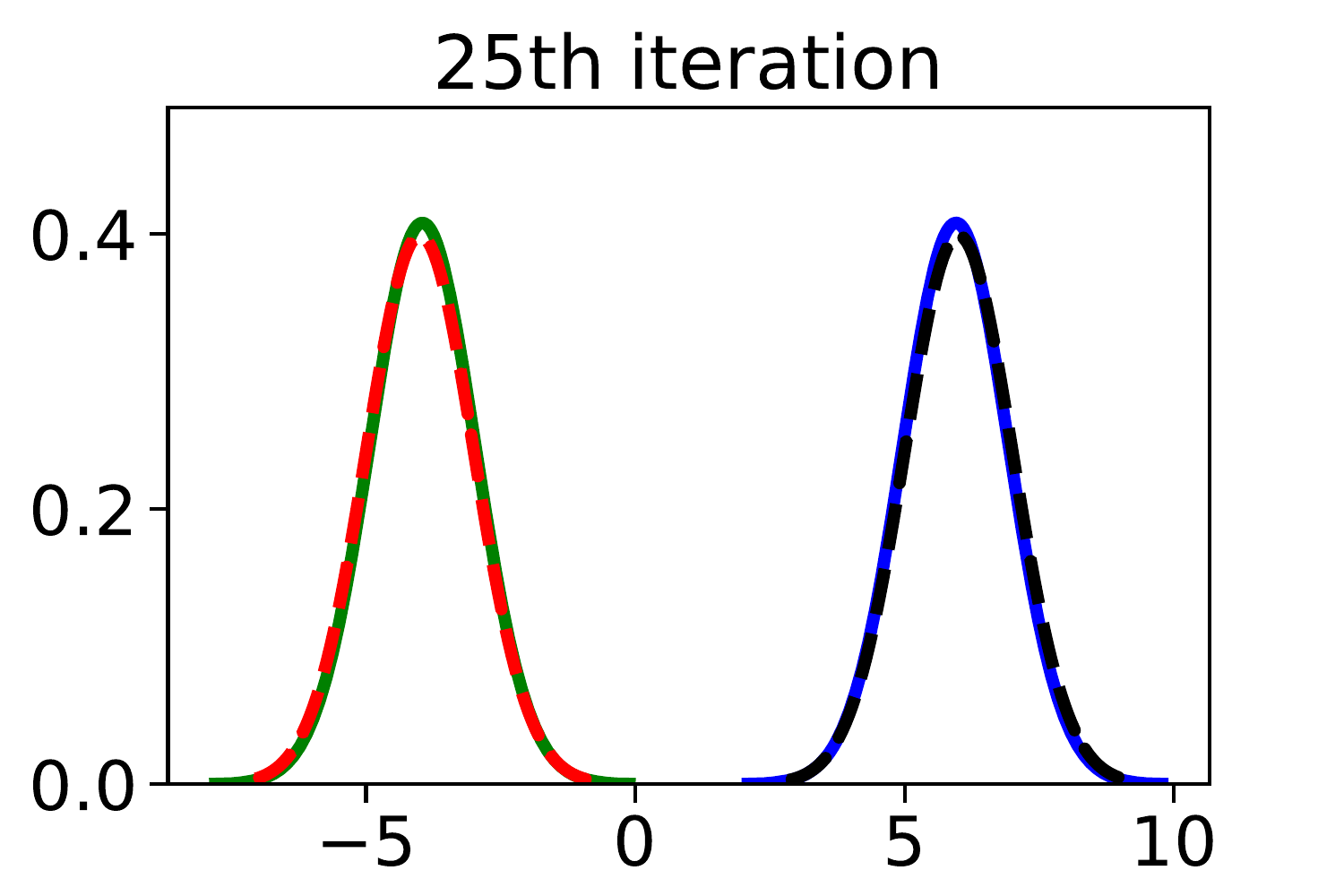}\\
\end{tabular}
}
\caption{Marginal plot for 1D Gaussian example. The red and black dashed lines correspond to two marginal distribution respectively and the solid blue and green lines are the kernel estimated density functions of particles at certain iterations. After first 25 iterations, the particles have matched the marginal distributions very well.}
\label{fig:1dGauss_marg}
\end{figure}

\begin{figure} 
\centering
{\scriptsize 
\begin{tabular}{ccc}
\includegraphics[width=.28\textwidth]{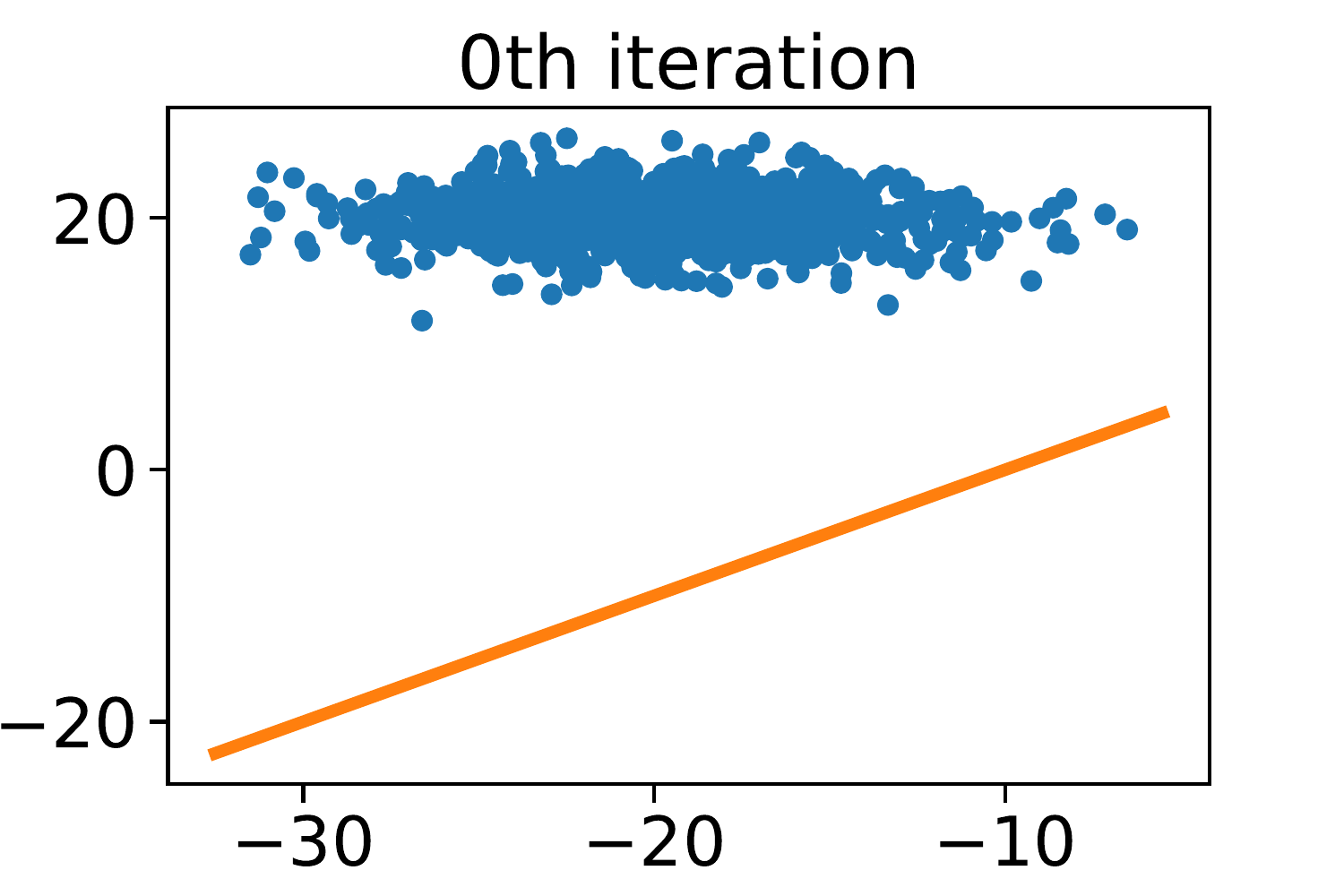} &
\includegraphics[width=.28\textwidth]{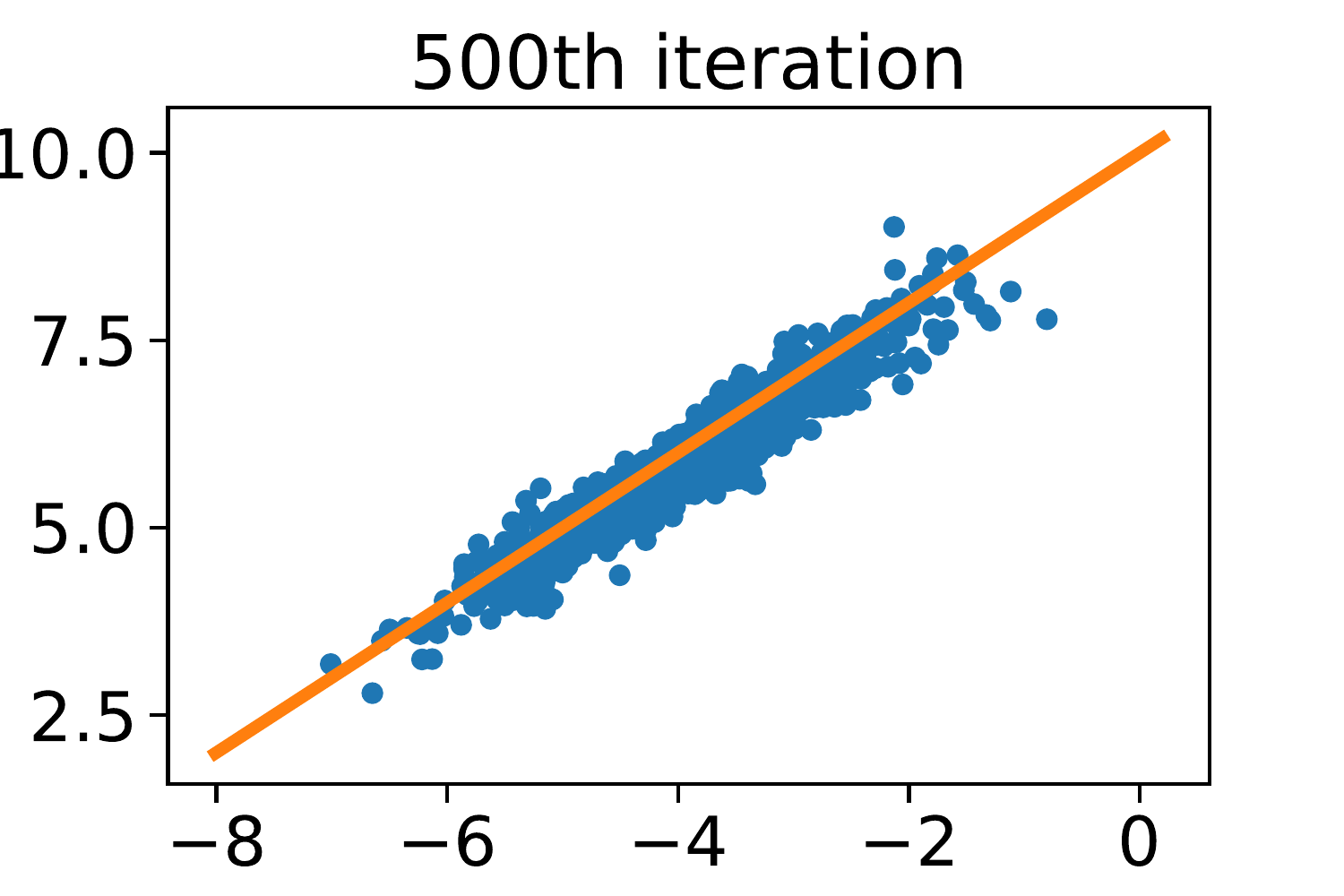} &
\includegraphics[width=.28\textwidth]{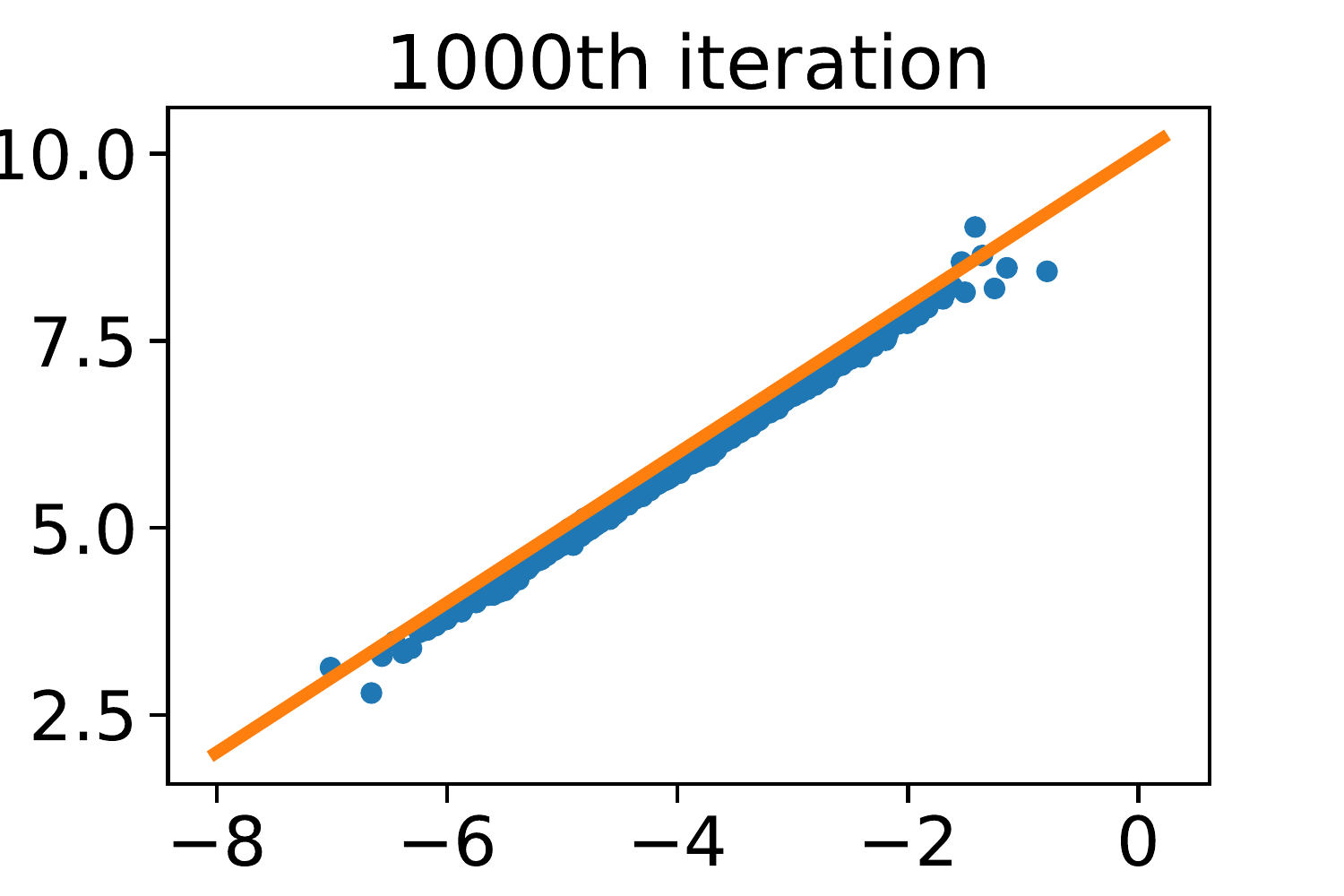} \\
\end{tabular}
}
\caption{The sample approximation for 1D Gaussian example. The orange dash line corresponds to the optimal transport map $T(x)=x+10$.}
\label{fig:1dGauss_map}
\end{figure}


{\bf 1D Gaussian Mixture}
Then we apply the algorithm to two 1D Gaussian mixture $\varrho_1=\frac{1}{2}\mathcal{N}(x;-1,1)+\frac{1}{2}\mathcal{N}(x;1,1), \varrho_2=\frac{1}{2}\mathcal{N}(x;-2,1)+\frac{1}{2}\mathcal{N}(x;2,1)$. For experiment, we set $\lambda=60, \Delta t=0.0004$ and run it with 1000 particles $(X_i,Y_i)$'s for 5000 iterations. We initialize the particles by drawing 2000 i.i.d. sample points from $\mathcal{N}(x;0,2)$ as $X_i$'s and $Y_i$'s. In figure \ref{fig:1dGauss_mix}, we can see the particles still match the marginal distributions well and give a clear approximation for the optimal transport map. 

\begin{figure} 
\centering
{\scriptsize 
\begin{tabular}{cc}
\includegraphics[width=.42\textwidth]{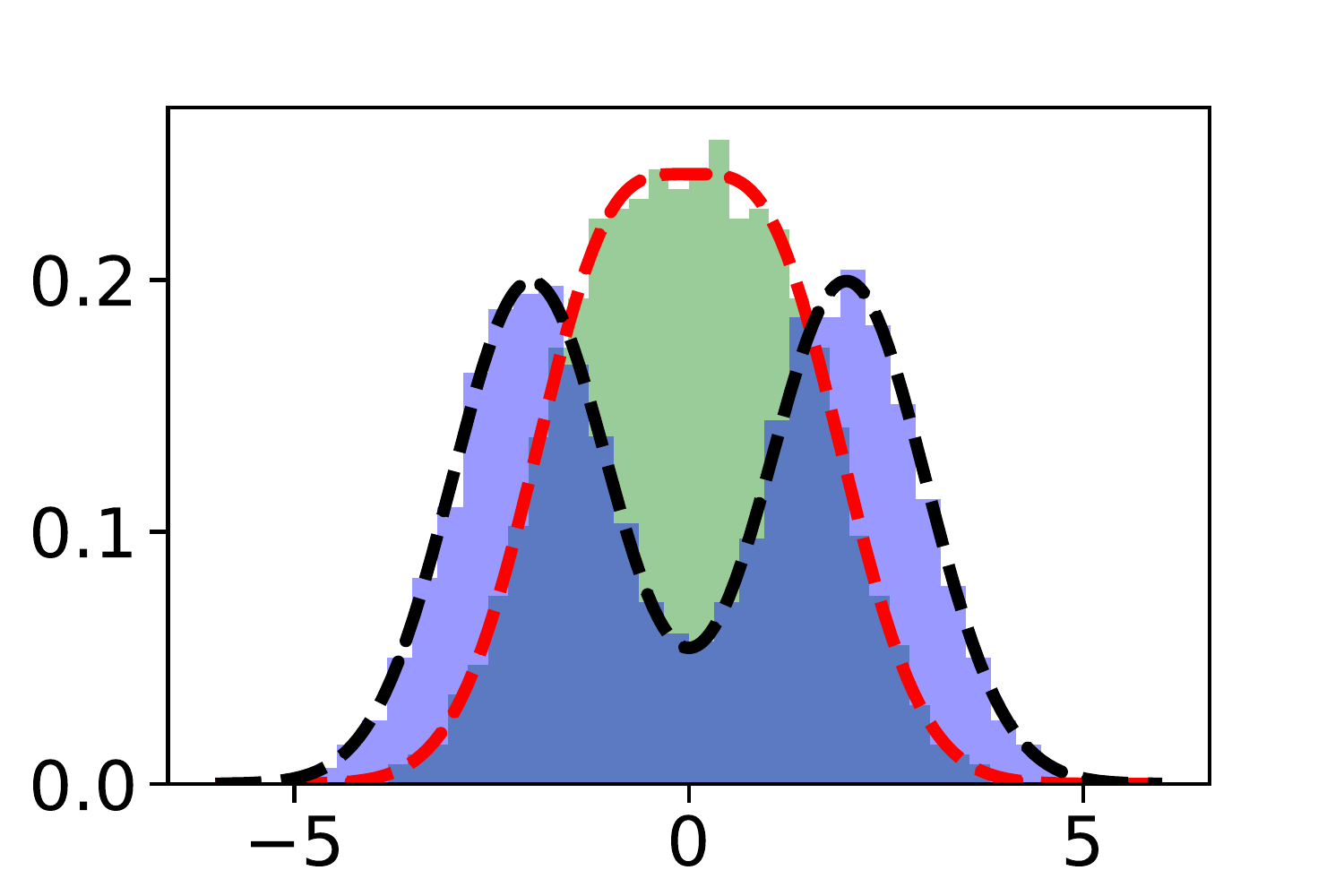} &
\includegraphics[width=.42\textwidth]{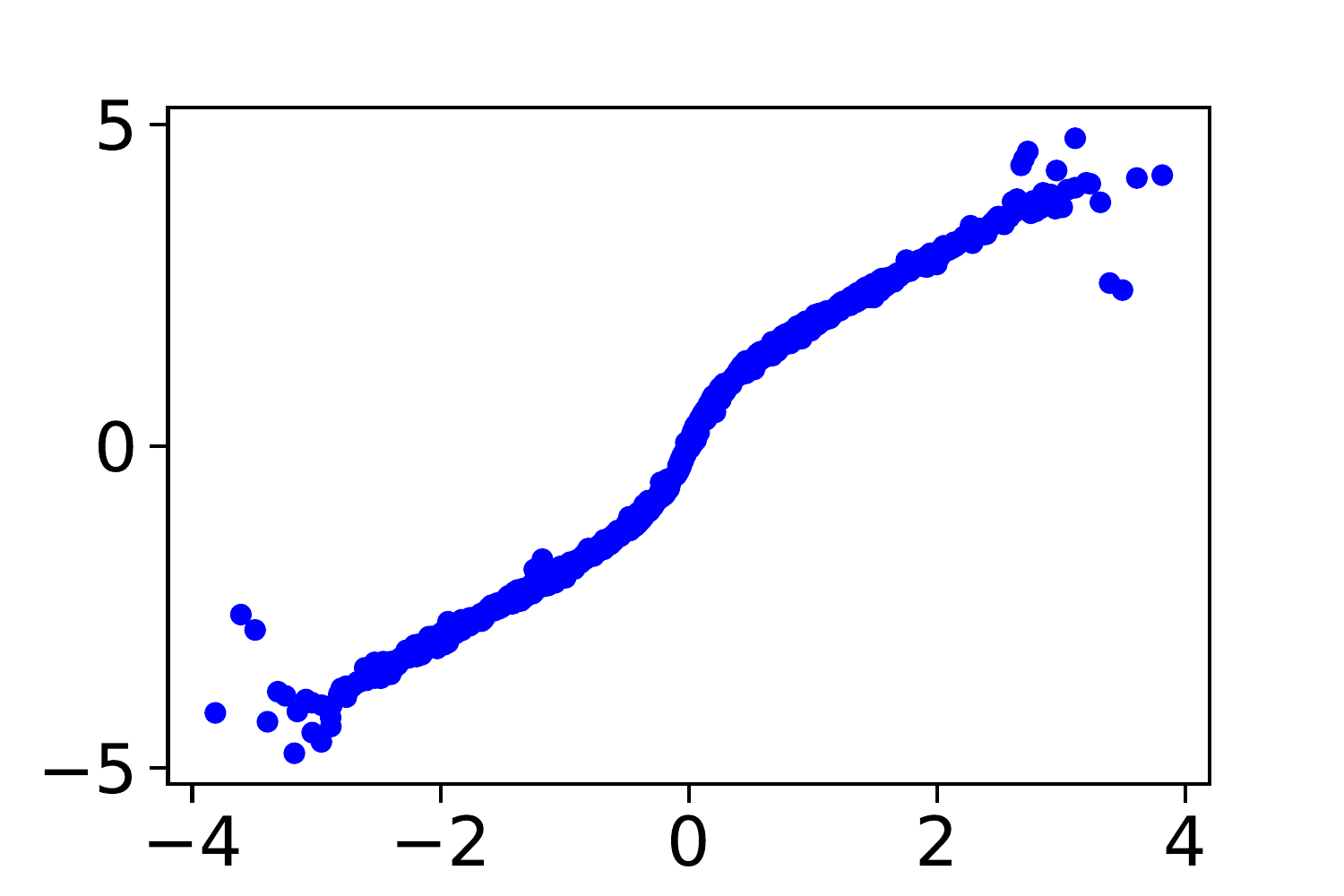} \\
\end{tabular}
}
\vspace{-6pt}
\caption{1D Gaussian mixture. Left. Marginal plot. The dash lines correspond to two marginal distributions. The histogram indicates the distribution of particles after 5000 iterations. Right. Sample approximation for the optimal coupling.}
\label{fig:1dGauss_mix}
\end{figure}

{\bf Synthetic 2D Data} Given two marginals $\mu, \nu$ and cost function $|x-y|^p$, we can get a constant speed geodesic connecting two marginals by defining the curve $\mu_t = (\pi_t)_{\#} \gamma$ where $\gamma$ is the optimal coupling and $\pi_t(x,y) = (1-t)x+t y$. Given two gray scale images, if we normalize the pixel intensity, the image can be treated as a histogram representing a discrete 2D distribution. By applying the RBF kernel, the image can be converted to a continuous distribution as Gaussian mixture. Since our method gives a sample approximation for the optimal coupling, we are able to get the sample approximation of a series of distributions interpolating between two given marginals. In figure \ref{fig:marg}, we plot several simple gray scale images which are converted to continuous probability densities and used as marginals in our experiments. In figures \ref{fig:star_circle_mnist}
, we show two examples of transporting one point cloud image to the other.
\vspace{-6pt}
\begin{figure}[htp]
\centering
{\scriptsize 
\begin{tabular}{cccc}
\includegraphics[width=.14\textwidth]{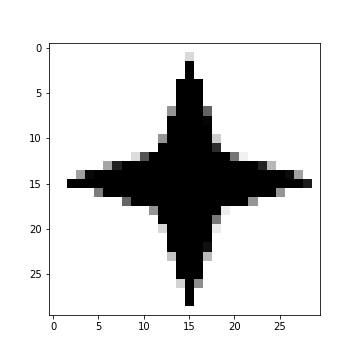} &
\includegraphics[width=.14\textwidth]{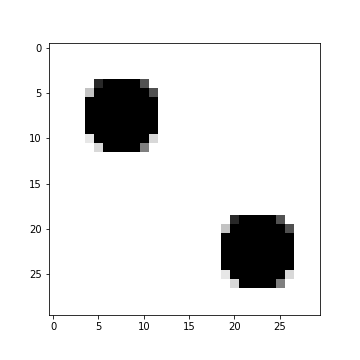} &
\includegraphics[width=.14\textwidth]{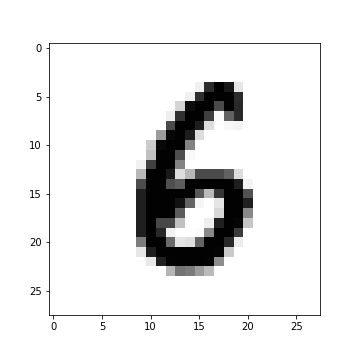} &
\includegraphics[width=.14\textwidth]{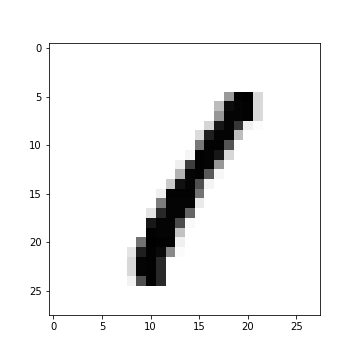} \\
(a) & (b) & (c) & (d) 
\end{tabular}
}
\caption{Some gray scale images.}
\label{fig:marg}
\end{figure}

\begin{figure}[htp]
\centering
{\scriptsize 
\begin{tabular}{cccccc}
\includegraphics[width=.14\textwidth]{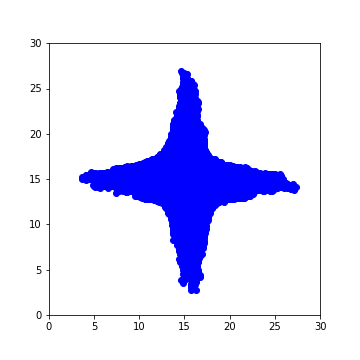} &
\includegraphics[width=.14\textwidth]{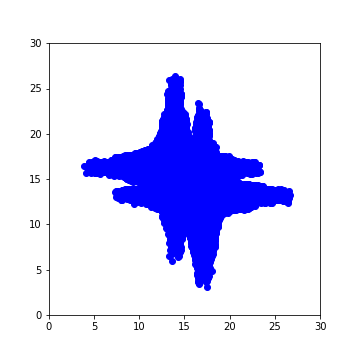} &
\includegraphics[width=.14\textwidth]{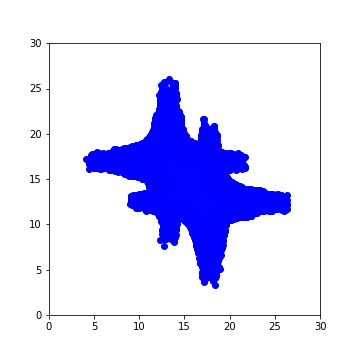} &
\includegraphics[width=.14\textwidth]{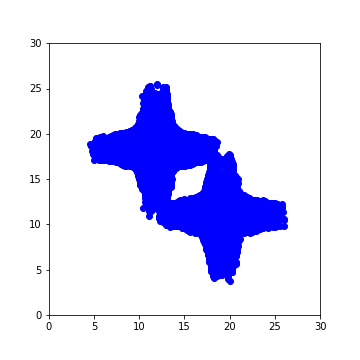} &  
\includegraphics[width=.14\textwidth]{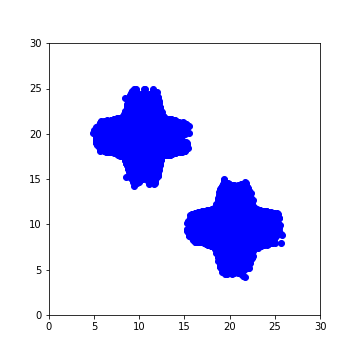} &  
\includegraphics[width=.14\textwidth]{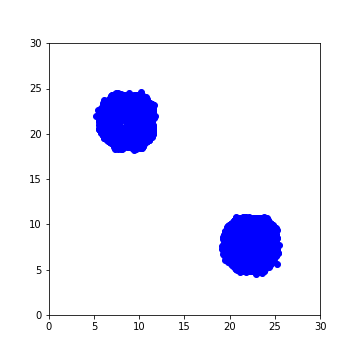} \\
\includegraphics[width=.14\textwidth]{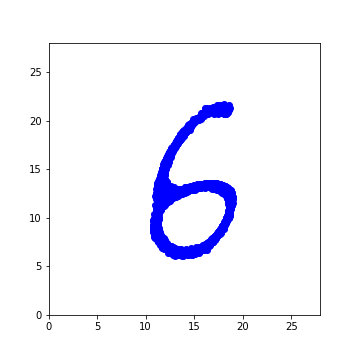} &
\includegraphics[width=.14\textwidth]{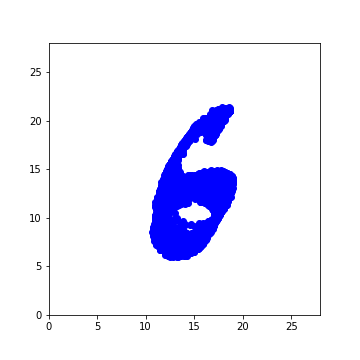} &
\includegraphics[width=.14\textwidth]{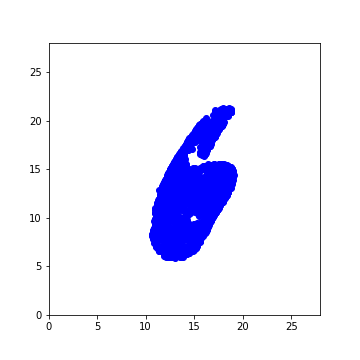} &
\includegraphics[width=.14\textwidth]{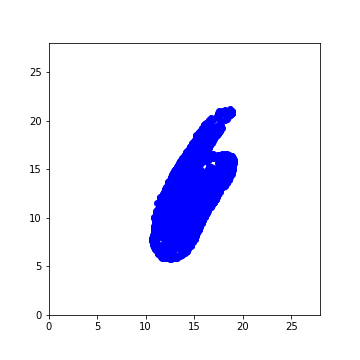} &  
\includegraphics[width=.14\textwidth]{} &  
\includegraphics[width=.14\textwidth]{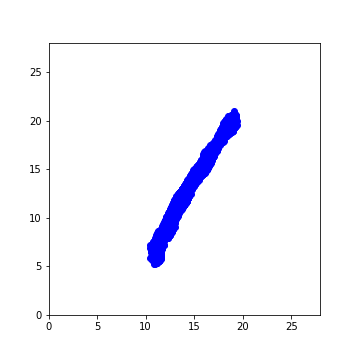}
\end{tabular}
}
\caption{The sample approximation of a sequence of distributions interpolating between the two given distributions: "star" and "disks", MNIST handwritten digits "6" and "1".}
\label{fig:star_circle_mnist}
\end{figure}


{\bf Wasserstein Barycenters}
As we discuss in the previous section, we can numerically solve the Wasserstein barycenter problem using our scheme. Given two Gaussian distributions $\rho_1 = \mathcal{N}(-10,1), \rho_2 = \mathcal{N}(10,1)$, and cost function
\begin{equation*}
    c(x,x_1,x_2) = w_1 \|x-x_1\|^2 + w_2 \|x-x_2\|^2,
\end{equation*} we can compute sample approximation of the barycenter $\bar{\rho}$ of $\rho_1,\rho_2$. We try different weights $[w_1,w_2]=[0.25,0.75],[0.5,0.5],[0.75,0.25]$ to test our algorithm. The experimental results are shown in fig \ref{fig:baryc}. The distribution of the particles corresponding to the barycenter random variable $X_0$ converges to $\mathcal{N}(5,1), \mathcal{N}(0,1), \mathcal{N}(-5,1)$ successfully after 2000 iterations, which demonstrates the accuracy of the algorithm.

\begin{figure}[htp]
\centering
{\scriptsize 
\begin{tabular}{ccc}
\includegraphics[width=.28\textwidth]{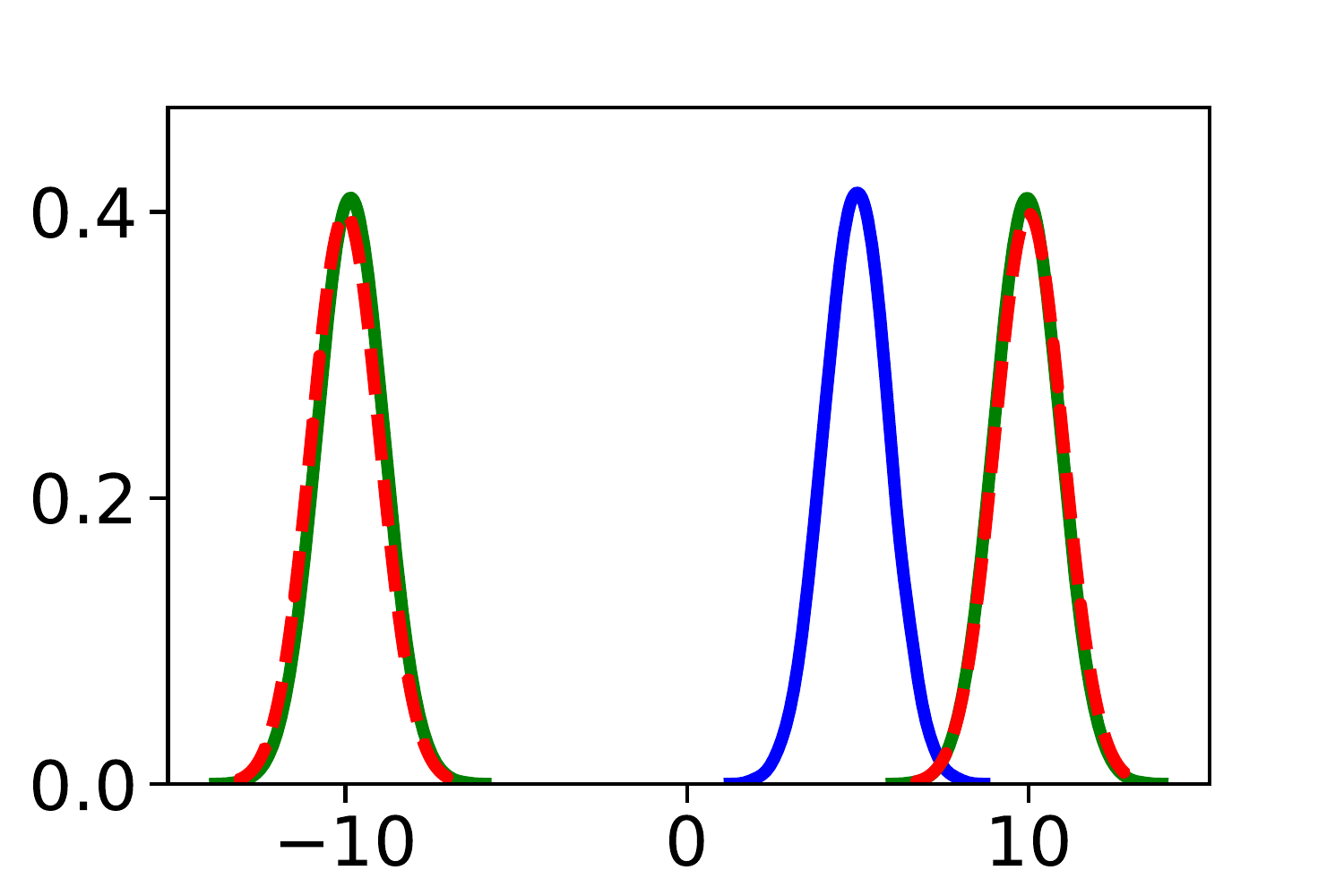} &
\includegraphics[width=.28\textwidth]{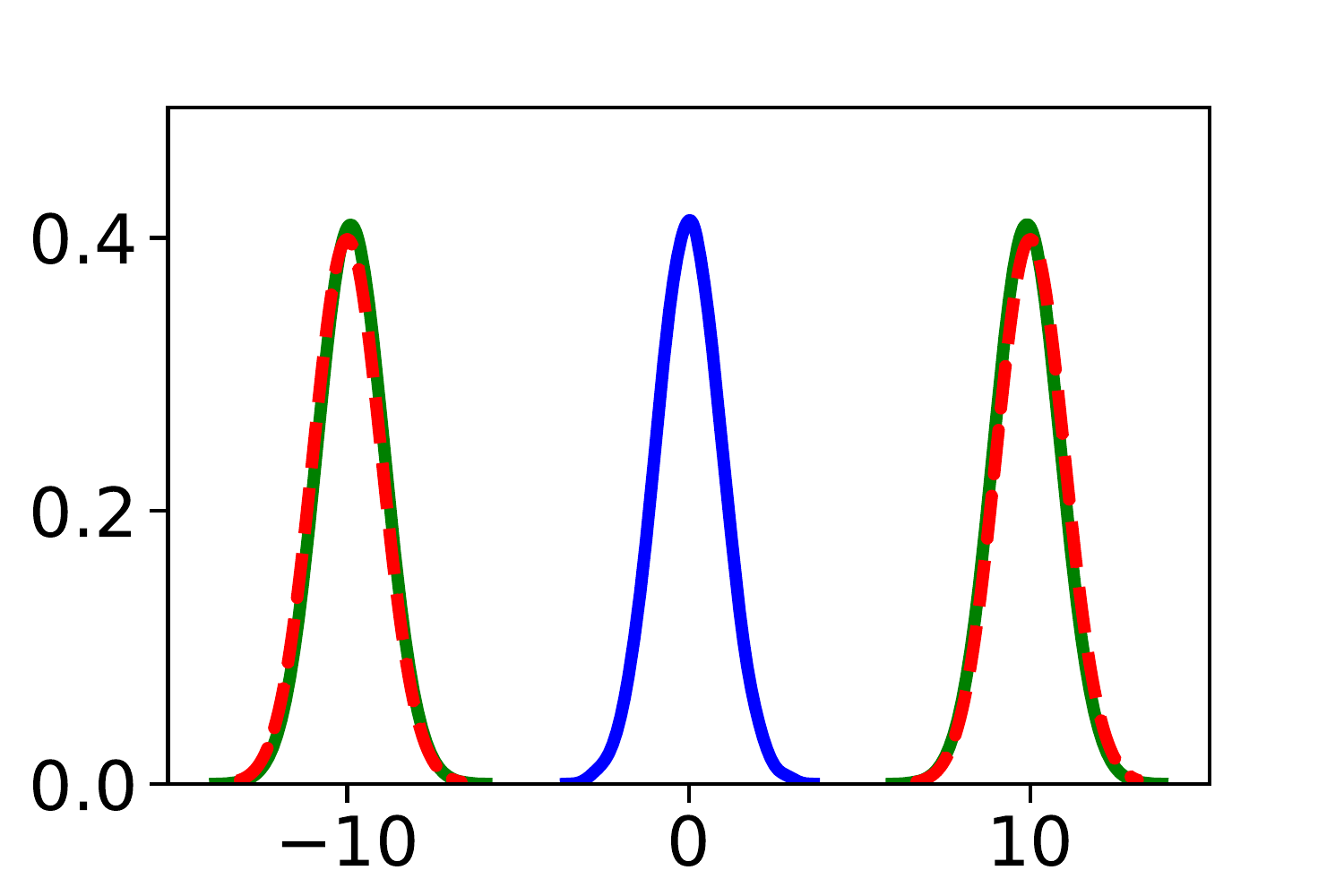} &
\includegraphics[width=.28\textwidth]{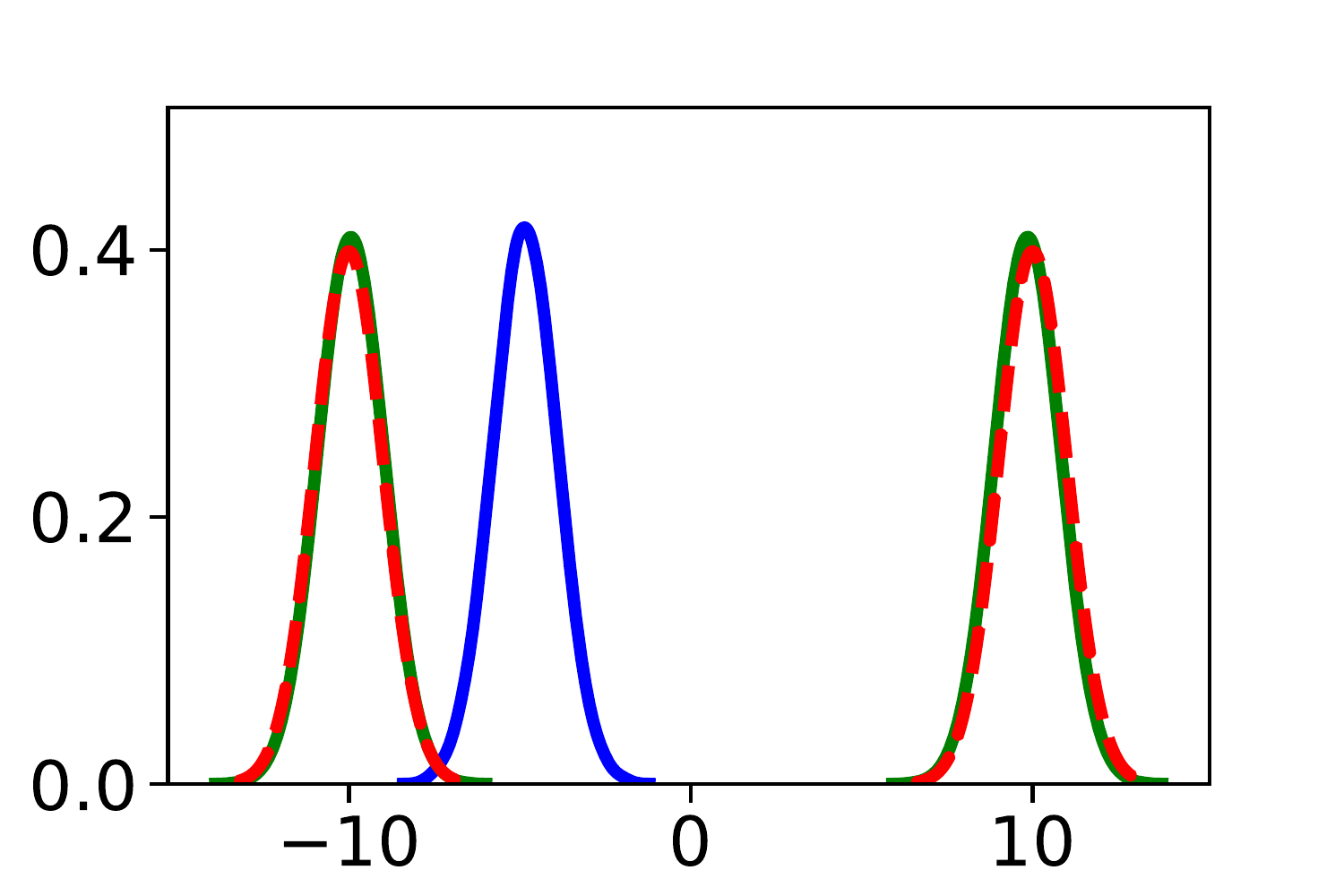} 
\end{tabular}
}
\caption{Density plots for 1D Wasserstein barycenter example.  The red dashed lines correspond to two marginal distributions respectively and the solid green lines are the kernel estimated density functions of the particles $X_1$'s and $X_2$'s. The solid blue line represents the kernel estimated density function of the particles corresponding to the barycenter. Left. $[w_1,w_2]=[0.25,0.75]$. Middle. $[w_1,w_2]=[0.5,0.5]$. Right. $[w_1,w_2]=[0.75,0.25]$.}
\label{fig:baryc}
\end{figure}

\section{Conclusion}
We propose the constrained Entropy Transport problem \eqref{constrained-ET} and study its theoretical properties. We discover that the optimal distribution of \eqref{constrained-ET} can be treated as an approximation to the optimal plan of the Optimal Transport problem \eqref{OT} in the sense of $\Gamma$-convergence . We also construct the Wasserstein gradient flow of the Entropy Transport functional. Based on that, we propose an innovative algorithm which iteratively evolves a particle system to compute for the sample-wised optimal distribution to the constrained Entropy Transport problem \eqref{constrained-ET}. 

For future work, on theoretical aspect, we will mainly concentrate on the quantitative study of the discrepancy between $\gamma_{cET}$ and $\gamma_{OT}$ and the analysis of displacement convexity of functional $\mathcal{E}_{\Lambda,\rm{KL}}(\cdot|\mu,\nu)$. On numerical aspect, we will focus more on producing further examples in higher dimensional space and finding potential applications of our method to different areas of machine learning research. 
%
%
%
\bibliographystyle{splncs04}
\bibliography{example_paper}
%





\newpage

\onecolumn

\appendix
\section{Appendix A}\label{App a}

In this appendix, we present several important theorems regarding Entropy Transport problem \eqref{general ET} and our proposed constrained Entropy Transport problem \eqref{constrained-ET}.
\subsection{Entropy Transport problem}\label{App a1}

Let us recall the Entropy Transport problem:\\
For $\mu,\nu\in\mathcal{M}(\mathbb{R}^d)$ and $\gamma\in\mathcal{M}(\mathbb{R}^d\times\mathbb{R}^d)$, we denote $\gamma_1 = \pi_{1 \#}\gamma, \gamma_2 = \pi_{2 \#}\gamma$. Here $\pi_1:\mathbb{R}^d\times\mathbb{R}^d\rightarrow \mathbb{R}^d$, is the projection onto the first coordinate: $\pi_1(x,y)=x$; and $\pi_2$ is the projection onto the second coordinate. We consider the functional:
\begin{equation*}
   \mathcal{E}(\gamma|\mu,\nu)=  \iint_{\mathbb{R}^d\times\mathbb{R}^d}c(x,y)d\gamma(x,y)+D({\pi_1}_{\#}\gamma \|\mu)+D({\pi_2}_{\#}\gamma \|\nu).
\end{equation*}
Here $c:\mathbb{R}^d\times\mathbb{R}^d\rightarrow [0,+\infty]$ is a lower semicontinuous cost function.
$D(\cdot\|\cdot):\mathcal{M}(\mathbb{R}^d)\times\mathcal{M}(\mathbb{R}^d)\rightarrow \mathbb{R}$ is the divergence functional defined as:
\begin{equation*}
  D(\mu\|\nu) = 
  \begin{cases}
  \int_{\mathbb{R}^d} F\left(\frac{d\mu}{d\nu}\right)d\nu \quad \textrm{if}~\mu\ll\nu\\
  +\infty \quad \textrm{otherwise}
  \end{cases}
\end{equation*}
Here $F:[0,+\infty)\rightarrow [0,+\infty]$ is some convex function and there exists at least one $s>0$ such that $F(s)<+\infty$. 

The \textbf{(general) Entropy Transport problem} is:
\begin{equation*}
 \inf\limits_{\gamma\in\mathcal{M}(\mathbb{R}^d\times\mathbb{R}^d)}\left\{ \mathcal{E}(\gamma|\mu,\nu) \right\}. 
\end{equation*}
The following theorem shows $\mathcal{E}(\cdot|\mu,\nu)$ is convex on $\mathcal{P}(\mathbb{R}^d\times\mathbb{R}^d)$:
\begin{theorem}
Under the previous assumptions on $c$ and $F$, for any $\gamma_a,\gamma_b\in\mathcal{M}(\mathbb{R}^d\times\mathbb{R}^d)$ and $0\leq t\leq 1$, we have:
\begin{equation*}
\mathcal{E}(t\gamma_a+(1-t)\gamma_b)\leq t\mathcal{E}(\gamma_a)+(1-t)\mathcal{E}(\gamma_b).
\end{equation*}
\end{theorem}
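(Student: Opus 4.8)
The plan is to decompose the functional into its three constituent pieces and establish convexity of each separately, then add. Writing $\mathcal{E}(\gamma|\mu,\nu) = C(\gamma) + D({\pi_1}_{\#}\gamma\|\mu) + D({\pi_2}_{\#}\gamma\|\nu)$ with $C(\gamma) = \iint_{\mathbb{R}^d\times\mathbb{R}^d} c\, d\gamma$, the cost term $C$ is linear in $\gamma$ (integration of a fixed kernel $c$ against the measure), hence affine; in particular $C(t\gamma_a + (1-t)\gamma_b) = tC(\gamma_a) + (1-t)C(\gamma_b)$, so it contributes an equality to the estimate. The whole difficulty therefore sits in the two divergence terms.

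For those, I would first exploit the linearity of the pushforward operation. From the defining identity ${\pi}_{\#}\gamma(E) = \gamma(\pi^{-1}(E))$ and the fact that $\gamma\mapsto\gamma(\pi^{-1}(E))$ is linear, one gets ${\pi}_{\#}(t\gamma_a+(1-t)\gamma_b) = t\,{\pi}_{\#}\gamma_a + (1-t)\,{\pi}_{\#}\gamma_b$ for $\pi\in\{\pi_1,\pi_2\}$. Consequently it suffices to prove that, for a fixed reference measure $\sigma\in\mathcal{M}(\mathbb{R}^d)$, the map $\lambda\mapsto D(\lambda\|\sigma)$ is convex on $\mathcal{M}(\mathbb{R}^d)$; applying this with $(\sigma,\lambda)=(\mu,{\pi_1}_{\#}\gamma)$ and $(\sigma,\lambda)=(\nu,{\pi_2}_{\#}\gamma)$ then closes the argument.

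To prove convexity of $\lambda\mapsto D(\lambda\|\sigma)$, I would treat the absolutely continuous case first. If $\lambda_a,\lambda_b\ll\sigma$ with densities $u_a = d\lambda_a/d\sigma$ and $u_b = d\lambda_b/d\sigma$, then $t\lambda_a+(1-t)\lambda_b\ll\sigma$ with density $tu_a+(1-t)u_b$, and the pointwise convexity of $F$ gives $F(tu_a+(1-t)u_b)\le tF(u_a)+(1-t)F(u_b)$ holding $\sigma$-almost everywhere. Integrating this inequality against $\sigma$ yields exactly $D(t\lambda_a+(1-t)\lambda_b\|\sigma)\le tD(\lambda_a\|\sigma)+(1-t)D(\lambda_b\|\sigma)$.

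The only place that needs genuine care — and hence the main obstacle, though a mild one — is the case where absolute continuity fails. I would argue that if either $\lambda_a\not\ll\sigma$ or $\lambda_b\not\ll\sigma$, then the corresponding divergence equals $+\infty$, so for $t\in(0,1)$ the right-hand side $tD(\lambda_a\|\sigma)+(1-t)D(\lambda_b\|\sigma)$ is $+\infty$ and the inequality holds trivially (note that $F\ge 0$ rules out any indeterminate $0\cdot\infty$ or $\infty-\infty$ issues, and the boundary cases $t\in\{0,1\}$ are immediate equalities). Combining the three convexity statements and summing produces $\mathcal{E}(t\gamma_a+(1-t)\gamma_b|\mu,\nu)\le t\mathcal{E}(\gamma_a|\mu,\nu)+(1-t)\mathcal{E}(\gamma_b|\mu,\nu)$, which is the claim.
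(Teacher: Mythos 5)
Your proposal is correct and follows essentially the same route as the paper's proof: both exploit linearity of the pushforward and of the cost term, reduce the problem to pointwise convexity of $F$ applied to the Radon--Nikodym densities in the absolutely continuous case, and dispose of the non-absolutely-continuous case by noting the right-hand side is $+\infty$. Your write-up is in fact slightly more explicit than the paper's (justifying pushforward linearity, noting $F\geq 0$ excludes indeterminate forms, and checking $t\in\{0,1\}$), but the underlying argument is identical.
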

\begin{proof}
   First we prove the result under following conditions:
  \begin{equation}
   \pi_{1 \#}\gamma_a\ll\mu,~\pi_{1 \#}\gamma_b\ll\mu,~\pi_{2 \#}\gamma_a\ll\nu,~\pi_{2\#}\gamma_b\ll\nu, \label{4 cond}
  \end{equation}
  we have $\pi_{1 \#}(t\gamma_a + (1-t)\gamma_b )\ll\mu$ and $\pi_{2 \#}( t \gamma_a +(1-t)\gamma_b   )\ll\nu$. We thus have
  \begin{equation*}
    F\left(\frac{d\pi_{1 \#}(t\gamma_a+(1-t)\gamma_b)}{d\mu}\right) = F\left( t \frac{d\pi_{1 \#}\gamma_a}{ d \mu} + (1-t)\frac{d\pi_{1\#}\gamma_b}{d\mu}   \right)\leq t F\left(\frac{d\pi_{1 \#}\gamma_a}{d\mu}\right) + (1-t)F\left(\frac{d\pi_{1 \#}\gamma_b}{ d \mu}\right). 
  \end{equation*}
 As a result, one can prove $D(\pi_{1 \#}(t\gamma_a+(1-t)\gamma_b)\|\mu)\leq t D(\pi_{1 \#}\gamma_a\|\mu)+(1-t)D(\pi_{1 \#}\gamma_b\|\mu  )$.
 We can prove similar inequality for the other side of marginal. And then it is not hard to verify $\mathcal{E}(t\gamma_a+(1-t)\gamma_b)\leq t\mathcal{E}(\gamma_a)+(1-t)\mathcal{E}(\gamma_b).$\\
 Now when any one of the four conditions in \eqref{4 cond} is not satisfied, the right hand side of the inequality is $+\infty$, thus the inequality still holds.
\end{proof}

The following theorem gives sufficient conditions for the existence and uniqueness of the solution to the Entropy Transport problem \eqref{general ET}: 
\begin{theorem}
  We consider problem \eqref{general ET} involving the entropy transport functional defined in \eqref{general ET functional}. Suppose that the cost $c$ and $F$ satisfy the previous assumptions. We further assume that there exists at least one $\gamma\in\mathcal{M}(\mathbb{R}^d\times\mathbb{R}^d)$ such that $\mathcal{E}(\gamma|\mu,\nu)<+\infty$. Then the problem \eqref{general ET} admits at least one optimal solution.\\
  If we further assume $c(x,y)=h(x-y)$ with strictly convex $h:\mathbb{R}^d\rightarrow[0,+\infty)$; $F$ is strictly convex, and is superlinear, i.e. $\lim_{s\rightarrow+\infty}\frac{F(s)}{s}=+\infty$; distribution $\mu$ has density function, i.e. $\mu\ll\mathscr{L}^d$ where $\mathscr{L}^d$ as the Lebesgue measure on $\mathbb{R}^d$. 
  Under these further assumptions, there exists unique optimal solution to the problem.
\end{theorem}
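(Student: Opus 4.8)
The plan is to treat the two assertions separately: existence by the direct method of the calculus of variations, and uniqueness by combining the convexity already recorded in Theorem~\ref{thm convexity of ET functional} with the uniqueness of the classical optimal transport plan from Theorem~\ref{uniqueness optimal distribution OT}. For existence, first observe that $\mathcal{E}(\cdot|\mu,\nu)\ge 0$ since $c\ge 0$ and $F\ge 0$, so $\mathcal{E}_{\min}:=\inf\mathcal{E}\ge 0$; the standing hypothesis supplies some $\gamma$ with $\mathcal{E}(\gamma|\mu,\nu)<+\infty$, hence $\mathcal{E}_{\min}$ is finite. I would then fix a minimizing sequence $\{\gamma_n\}$ with $\mathcal{E}(\gamma_n|\mu,\nu)\le \mathcal{E}_{\min}+1=:M$ and extract a weak limit. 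The crucial compactness estimate is a mass bound: since the total mass satisfies $|\gamma_n|=|\pi_{1\#}\gamma_n|$, Jensen's inequality for the convex $F$ gives
\[
 D(\pi_{1\#}\gamma_n\|\mu)\ \ge\ |\mu|\,F\!\left(\frac{|\pi_{1\#}\gamma_n|}{|\mu|}\right),
\]
and the growth of $F$ converts the bound $D\le M$ into a uniform bound on $|\gamma_n|$. Tightness of the marginals follows from the same divergence bound together with tightness of the fixed reference measures $\mu,\nu$ (uniform integrability of the densities $d\pi_{i\#}\gamma_n/d\mu$, via a de la Vallée--Poussin argument), and tightness of both marginals forces tightness of $\{\gamma_n\}$ on $\mathbb{R}^d\times\mathbb{R}^d$. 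Prokhorov's theorem then yields a subsequence converging weakly to some $\gamma^\ast\in\mathcal{M}(\mathbb{R}^d\times\mathbb{R}^d)$.

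It then remains to verify that $\mathcal{E}(\cdot|\mu,\nu)$ is sequentially lower semicontinuous along this weak convergence. The transport term $\gamma\mapsto\iint c\,d\gamma$ is lsc because $c$ is nonnegative and lower semicontinuous: approximate $c$ from below by bounded continuous functions and pass to the limit by monotone convergence. Each divergence term $\gamma\mapsto D(\pi_{i\#}\gamma\|\cdot)$ is jointly weakly lsc, this being the classical lower semicontinuity of $f$-divergences generated by a convex $F$, combined with continuity of the push-forwards $\pi_{i\#}$ under weak convergence. Adding the three contributions gives $\mathcal{E}(\gamma^\ast|\mu,\nu)\le\liminf_n\mathcal{E}(\gamma_n|\mu,\nu)=\mathcal{E}_{\min}$, so $\gamma^\ast$ is a minimizer of problem~\eqref{general ET}.

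For uniqueness under the additional hypotheses, suppose $\gamma_a,\gamma_b$ are two minimizers and set $\gamma_m=\tfrac12\gamma_a+\tfrac12\gamma_b$. The transport term is \emph{linear} in $\gamma$, so it contributes nothing to strict convexity; the strict convexity must therefore be extracted from the divergence terms. Writing $u_a,u_b$ for the $\mu$-densities of $\pi_{1\#}\gamma_a,\pi_{1\#}\gamma_b$ (well defined, since finiteness of $\mathcal{E}$ forces absolute continuity of the marginals), if these first marginals differed on a set of positive $\mu$-measure then strict convexity of $F$ would yield $D(\pi_{1\#}\gamma_m\|\mu)<\tfrac12 D(\pi_{1\#}\gamma_a\|\mu)+\tfrac12 D(\pi_{1\#}\gamma_b\|\mu)$, whence $\mathcal{E}(\gamma_m|\mu,\nu)<\mathcal{E}_{\min}$, a contradiction. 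Hence $\gamma_a,\gamma_b$ share the same first marginal, and by the identical argument with $\nu$ they share the same second marginal; call these $\sigma_1,\sigma_2$, where $\sigma_1\ll\mu\ll\mathscr{L}^d$ is absolutely continuous. Because the divergence contributions now coincide for both minimizers, each of $\gamma_a,\gamma_b$ must minimize $\iint c\,d\gamma$ among all couplings of $\sigma_1,\sigma_2$; that is, both solve the classical optimal transport problem with cost $c(x,y)=h(x-y)$, $h$ strictly convex, and an absolutely continuous source marginal $\sigma_1$. Theorem~\ref{uniqueness optimal distribution OT} (whose strictly convex cost requires only the source marginal to be absolutely continuous) then forces $\gamma_a=\gamma_b$.

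The main obstacle, I expect, lies in the compactness/lower-semicontinuity step of the existence part: securing the uniform mass bound and tightness of the minimizing sequence, and invoking the joint weak lower semicontinuity of the divergence functionals. The uniqueness argument is comparatively robust, but its one subtlety is that it cannot be reduced to strict convexity of the full functional, since the cost term is merely affine in $\gamma$; one must instead first pin down the two marginals using strict convexity of $F$, and only afterwards appeal to optimal-transport uniqueness to fix the coupling itself.
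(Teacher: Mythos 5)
The paper never proves this theorem itself: its entire ``proof'' is the remark that the statement ``is a direct summarize of Theorem 3.3, Corollary 3.6 and Example 3.7 of \cite{liero2018optimal}'', so your attempt has to be judged against those cited arguments rather than against anything written in the text.

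Your uniqueness half is correct, and it is essentially the cited mechanism (Corollary 3.6 together with Example 3.7 of \cite{liero2018optimal}). You correctly isolate the one subtlety: the transport term is affine in $\gamma$, so strict convexity must be harvested from $F$ to force two minimizers to share both marginals, and only then does one pass to the classical transport problem between the common marginals $\sigma_1,\sigma_2$ (any cheaper coupling of $(\sigma_1,\sigma_2)$ would beat $\mathcal{E}_{\min}$), where $\sigma_1\ll\mu\ll\mathscr{L}^d$ and $c(x,y)=h(x-y)$ with $h$ strictly convex, so Theorem~\ref{uniqueness optimal distribution OT} (Theorem 6.2.4 of \cite{ambrosio2008gradient}, which indeed needs absolute continuity of only one marginal) yields $\gamma_a=\gamma_b$.

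The existence half, however, has a genuine gap: each of its three key steps silently uses superlinearity of $F$, which is a hypothesis only of the \emph{second} assertion, not of the first. (i) The Jensen bound $D(\pi_{1\#}\gamma_n\|\mu)\ge \mu(\mathbb{R}^d)\,F\bigl(|\gamma_n|/\mu(\mathbb{R}^d)\bigr)$ controls the masses $|\gamma_n|$ only if $F(s)\to+\infty$ as $s\to\infty$; under the standing assumptions $F$ may be non-increasing, e.g. $F(s)=(1-s)_+$ or $F\equiv 0$, and then the bound is vacuous. (ii) De la Vall\'ee--Poussin gives uniform integrability of the densities exactly when the integrand is superlinear. (iii) Most seriously, with the paper's convention \eqref{general divergence functional} that $D(\sigma\|\mu)=+\infty$ whenever $\sigma\not\ll\mu$, the divergence is \emph{not} weakly lower semicontinuous for general convex $F$: take $F(s)=(1-s)_+$, $\mu=\mathscr{L}^1$ restricted to $[0,1]$, and $\sigma_n$ with density $n\,\mathbf{1}_{[0,1/n]}$; then $\sigma_n\rightharpoonup\delta_0$ but $D(\sigma_n\|\mu)=1-1/n\to 1<+\infty=D(\delta_0\|\mu)$. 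The ``classical'' lower semicontinuity of $f$-divergences is a statement about the relaxed functional that charges the singular part by the recession slope, $\int F\bigl(\tfrac{d\sigma}{d\mu}\bigr)\,d\mu+F'_\infty\,\sigma^\perp(\mathbb{R}^d)$, and this coincides with \eqref{general divergence functional} only when $F'_\infty=+\infty$, i.e. when $F$ is superlinear. This is exactly why \cite{liero2018optimal} builds its theory around the relaxed divergence and proves existence (its Theorem 3.3) under explicit coercivity conditions, which in the non-compact setting of $\mathbb{R}^d$ with a cost $h(x-y)$ (whose sublevels are unbounded strips) come down to superlinear entropies. So your direct method does prove existence in the superlinear case --- which covers the KL case the paper actually uses --- but it does not establish the first assertion under the bare assumptions stated; that assertion, read literally, over-summarizes the citation, and your proof inherits rather than repairs the missing coercivity.
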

This theorem is a direct summarize of Theorem 3.3; Corollary 3.6 and Example 3.7 of \cite{liero2018optimal}.

\subsection{Constrained Entropy Transport problem}\label{App a2}
We consider the following functional:
\begin{equation*}
  \mathcal{E}_{\Lambda,\rm{KL}}(\gamma|\mu,\nu) = \iint_{\mathbb{R}^d\times\mathbb{R}^d} c(x,y)~d\gamma(x,y) + \Lambda D_{\rm{KL}}(\pi_{1 \#} \gamma \| \mu ) + \Lambda D_{\rm{KL}}(\pi_{2 \#} \gamma \| \nu ).
\end{equation*}
with assumptions:
\begin{align*}
   (A).~& c(x,y) = h(x-y)~\textrm{with}~ h ~\textrm{a strictly convex function}.\\
   \quad (B). ~& \mu,\nu\in\mathcal{P}(\mathbb{R}^d)~\rm{and}~   \mu\ll\mathscr{L}^d, \nu\ll\mathscr{L}^d ~ 
\end{align*}
We consider the following \textbf{constrained Entropy Transport problem}:
\begin{equation*}
  \inf_{\gamma\in\mathcal{P}(\mathbb{R}^d\times\mathbb{R}^d)}\left\{ \mathcal{E}_{\Lambda,\rm{KL}}(\gamma |\mu , \nu)  \right\}.
\end{equation*}
By choosing $\gamma=\mu\otimes \nu$, i.e. choose $\gamma$ as the direct product of $\mu,\nu$, we have $\mathcal{E}_{\Lambda,\rm{KL}}(\mu\otimes\nu |\mu , \nu)=\iint cd\mu\otimes\nu\geq0$. One can prove that $\underset{  \gamma\in\mathcal{P}(\mathbb{R}^d\times\mathbb{R}^d) }{\inf   }\mathcal{E}_{\Lambda,\rm{KL}}(\gamma |\mu , \nu) \geq 0$. Thus the infimum value is finite and bounded from below by $0$. Let us now denote:
\begin{equation}
 \mathcal{E}_{\min } = \underset{  \gamma\in\mathcal{P}(\mathbb{R}^d\times\mathbb{R}^d) }{\inf   }\mathcal{E}_{\Lambda,\rm{KL}}(\gamma |\mu , \nu).\label{def Epsilon_min}
\end{equation}
The following theorem shows the existence of the optimal solution to problem \eqref{constrained-ET}. It also describes the relationship between the solution of constrained Entropy Transport problem and the solution of general Entropy Transport problem:
\begin{theorem} \label{rel}
  Suppose  $\tilde{\gamma}$ is the solution to original entropy transport problem \eqref{general ET}. Then we have $\tilde{\gamma}=Z\gamma$, here $Z = e^{-\frac{\mathcal{E}_{\rm{min}}}{2\Lambda}       } $ and $\gamma\in\mathcal{P}(\mathbb{R}^d\times\mathbb{R}^d)$ is the solution to constrained Entropy Transport problem \eqref{constrained-ET}.
\end{theorem}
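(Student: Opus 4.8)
The plan is to exploit the fact that the two problems differ only in the total--mass constraint, together with the controlled way in which the KL--regularised functional transforms under rescaling of the measure. Every nonzero $\sigma\in\mathcal{M}(\mathbb{R}^d\times\mathbb{R}^d)$ can be written uniquely as $\sigma=Z\gamma$ with $Z=\sigma(\mathbb{R}^d\times\mathbb{R}^d)>0$ and $\gamma=\sigma/Z\in\mathcal{P}(\mathbb{R}^d\times\mathbb{R}^d)$. Hence the unconstrained minimisation of $\mathcal{E}_{\Lambda,\rm{KL}}(\cdot|\mu,\nu)$ over $\mathcal{M}$ splits into an inner minimisation over the scalar mass $Z>0$ followed by an outer minimisation over $\gamma\in\mathcal{P}$, and the strategy is to carry out the inner minimisation explicitly and show that the resulting reduced functional is a monotone function of $\mathcal{E}_{\Lambda,\rm{KL}}(\gamma|\mu,\nu)$.

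First I would establish the key scaling identity. Writing $\gamma_1=\pi_{1\#}\gamma$, $\gamma_2=\pi_{2\#}\gamma$ and using $F(s)=s\log s-s+1$, a direct computation with $r=\frac{d\gamma_1}{d\mu}$ gives $F(Zr)=Zr\log Z+Zr\log r-Zr+1$. Since $\mu$ and $\gamma_1$ are both probability measures, $\int r\,d\mu=1$, $\int 1\,d\mu=1$ and $\int r\log r\,d\mu=D_{\rm{KL}}(\gamma_1\|\mu)$, so that
\begin{equation*}
D_{\rm{KL}}(Z\gamma_1\|\mu)=Z\log Z+Z\,D_{\rm{KL}}(\gamma_1\|\mu)-Z+1,
\end{equation*}
and likewise for the second marginal. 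Combining this with the linearity of the cost term in $\sigma$ yields
\begin{equation*}
\mathcal{E}_{\Lambda,\rm{KL}}(Z\gamma|\mu,\nu)=Z\,\mathcal{E}_{\Lambda,\rm{KL}}(\gamma|\mu,\nu)+2\Lambda\,(Z\log Z-Z+1).
\end{equation*}
This is the heart of the argument; the cancellations that make it clean are precisely the unit--mass terms, so it is essential that $\mu$, $\nu$ and the marginals of $\gamma$ are probabilities.

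Next I would minimise the right--hand side over $Z>0$ for fixed $\gamma$. The function $g(Z)=Z\,\mathcal{E}_{\Lambda,\rm{KL}}(\gamma|\mu,\nu)+2\Lambda(Z\log Z-Z+1)$ is strictly convex, and $g'(Z)=\mathcal{E}_{\Lambda,\rm{KL}}(\gamma|\mu,\nu)+2\Lambda\log Z=0$ gives the unique minimiser $Z^*(\gamma)=\exp(-\mathcal{E}_{\Lambda,\rm{KL}}(\gamma|\mu,\nu)/(2\Lambda))$ with minimal value $g(Z^*(\gamma))=2\Lambda\big(1-\exp(-\mathcal{E}_{\Lambda,\rm{KL}}(\gamma|\mu,\nu)/(2\Lambda))\big)$. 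Because $x\mapsto 2\Lambda(1-e^{-x/(2\Lambda)})$ is strictly increasing, minimising this reduced value over $\gamma\in\mathcal{P}$ is equivalent to minimising $\mathcal{E}_{\Lambda,\rm{KL}}(\gamma|\mu,\nu)$ over $\mathcal{P}$, i.e.\ to the constrained problem \eqref{constrained-ET}.

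Finally I would assemble the pieces. Writing the given unconstrained optimiser as $\tilde\gamma=\tilde Z\tilde\gamma_0$ with $\tilde\gamma_0\in\mathcal{P}$ (the degenerate case $\tilde\gamma=0$ is excluded, since it gives the value $2\Lambda$, which is strictly beaten by the optimally rescaled product coupling $\mu\otimes\nu$ as soon as $\iint c\,d(\mu\otimes\nu)<\infty$), optimality of $\tilde\gamma$ along the ray $\{Z\tilde\gamma_0\}_{Z>0}$ forces $\tilde Z=Z^*(\tilde\gamma_0)$, while optimality among all rays forces $\tilde\gamma_0$ to minimise $\mathcal{E}_{\Lambda,\rm{KL}}(\cdot|\mu,\nu)$ over $\mathcal{P}$; by Corollary \ref{corollary uniqueness constrained - ET} this gives $\tilde\gamma_0=\gamma$ and $\mathcal{E}_{\Lambda,\rm{KL}}(\gamma|\mu,\nu)=\mathcal{E}_{\min}$. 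Substituting back yields $\tilde Z=\exp(-\mathcal{E}_{\min}/(2\Lambda))=Z$, hence $\tilde\gamma=Z\gamma$. The main obstacle is entirely the scaling identity of the second paragraph: once the unit--mass cancellations are carried out correctly, the remainder is a one--variable convex optimisation together with a monotonicity observation.
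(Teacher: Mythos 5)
Your proposal is correct and follows essentially the same route as the paper's own proof: the same mass decomposition $\tilde\sigma = M\sigma$ with $M=\tilde\sigma(\mathbb{R}^d\times\mathbb{R}^d)$ and $\sigma\in\mathcal{P}(\mathbb{R}^d\times\mathbb{R}^d)$, the same scaling identity $\mathcal{E}_{\Lambda,\mathrm{KL}}(M\sigma|\mu,\nu) = M\,\mathcal{E}_{\Lambda,\mathrm{KL}}(\sigma|\mu,\nu) + 2\Lambda(M\log M - M+1)$, the same one-variable convex minimization giving $M^*=e^{-E(\sigma)/(2\Lambda)}$ with optimal value $2\Lambda\bigl(1-e^{-E(\sigma)/(2\Lambda)}\bigr)$, and the same monotonicity observation to pass to the outer infimum. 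One caveat: your closing appeal to Corollary \ref{corollary uniqueness constrained - ET} is circular within the paper's logical ordering, since that corollary is itself deduced from this very theorem; however, the appeal is also unnecessary, because your argument already establishes that the normalized measure $\tilde\gamma_0$ minimizes $\mathcal{E}_{\Lambda,\mathrm{KL}}(\cdot|\mu,\nu)$ over $\mathcal{P}(\mathbb{R}^d\times\mathbb{R}^d)$ and that $\tilde Z = e^{-\mathcal{E}_{\min}/(2\Lambda)}$, which is exactly the assertion to be proved, so that clause can simply be deleted. Your explicit exclusion of the degenerate case $\tilde\gamma=0$ (under $\iint c\,d(\mu\otimes\nu)<\infty$) is a small point of rigor the paper's proof skips.
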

\begin{proof}
  For any $\tilde{\sigma}\in\mathcal{M}(\mathbb{R}^d\times\mathbb{R}^d)$, we can write it as:
  \begin{equation*}
     \tilde{\sigma}=M\sigma
  \end{equation*}
  with $M = \tilde{\sigma}(\mathbb{R}^d\times\mathbb{R}^d)$ and $\sigma\in\mathcal{P}(\mathbb{R}^d\times\mathbb{R}^d)$. Now one can write $\mathcal{E}_{\Lambda,\rm{KL}}(\tilde{\sigma} | \mu,\nu)$ as:
  \begin{align*}
  \mathcal{E}_{\Lambda,\rm{KL}}(\tilde{\sigma}|\mu,\nu)=&\iint c(x,y)~d(M\sigma) + \Lambda \int \left( \frac{d \pi_{1\#}(M \sigma)}{d\mu}\log\left(\frac{d \pi_{1\#} (M\sigma)}{d\mu}\right)-\frac{d\pi_{1\#}(M\sigma)}{d\mu}+1\right)d\mu\\ 
  &+\Lambda \int \left( \frac{d\pi_{2\#}(M\sigma)}{d\nu}\log\left(\frac{d \pi_{2\#}( M\sigma )}{d\nu}\right)-\frac{d\pi_{2\#}(M\sigma)}{d\nu}+1\right)d\nu \\
  =& M \mathcal{E}_{\Lambda,\rm{KL}}(\sigma|\mu,\nu) + 2\Lambda(M\log M -M)+2\Lambda. 
  \end{align*}
  The optimization problem \eqref{general ET} on $\mathcal{M}(\mathbb{R}^d\times\mathbb{R}^d)$ can now be formulated as:
  \begin{equation*}
    \inf_{\sigma\in\mathcal{P}(\mathbb{R}^d\times\mathbb{R}^d)}\min_{M\geq 0} ~\left\{ M\mathcal{E}_{\Lambda,\rm{KL}}(\sigma|\mu,\nu)+2\Lambda (M\log M -M) + 2\Lambda \right\}.
  \end{equation*}
  It is not hard to verify that when $\sigma$ is fixed, we denote $E(\sigma) = \mathcal{E}_{\Lambda,\rm{KL}}(\sigma|\mu,\nu)$ for shorthand. 
  Then the minimum value of $ME(\sigma)+2\Lambda (M\log M-1)+2\Lambda$ ($M\geq 0$) is achieved at 
  $M = e^{-\frac{ E(\sigma)}{2\Lambda}}$ and the minimum value is $2\Lambda ( 1- e^{-\frac{E(\sigma)}{2\Lambda}})$. 
  Recall definition \eqref{def Epsilon_min}, we have:
  \begin{align*}
  &\inf_{\sigma\in\mathcal{P}(\mathbb{R}^d\times\mathbb{R}^d)}\min_{M\geq 0} ~\left\{ M\mathcal{E}_{\Lambda,\rm{KL}}(\sigma|\mu,\nu)+2\Lambda( M\log M-M)+2\Lambda \right\}\\
  =& \inf_{\sigma\in\mathcal{P}(\mathbb{R}^d\times\mathbb{R}^d)} \left\{2\Lambda(1- e^{-\frac{E(\sigma)}{2\Lambda}})   \right\} = 2\Lambda(1 - e^{ - \frac{\mathcal{E}_{\textrm{min}}}{ 2 \Lambda }} ). 
  \end{align*}
  SInce $\tilde{\gamma}$ solves \eqref{general ET},  we have:
  \begin{equation*}
     \mathcal{E}_{\Lambda,\rm{KL}}(\tilde{\gamma}|\mu,\nu) = \inf_{\sigma\in\mathcal{P}(\mathbb{R}^d\times\mathbb{R}^d)}\min_{M\geq 0} ~\left\{ M\mathcal{E}_{\Lambda,\rm{KL}}(\sigma|\mu,\nu)+2\Lambda (M\log M -M )+2\Lambda \right\} =2\Lambda(1 - e^{-\frac{\mathcal{E}_{ \textrm{  min}}}{2\Lambda}       } ). 
  \end{equation*}
  Now we write $\tilde{\gamma}=Z\gamma$, with $Z=\tilde{\gamma}(\mathbb{R}^d\times\mathbb{R}^d)$, $\gamma\in\mathcal{P}(\mathbb{R}^d \times \mathbb{R}^d)$. We have:
  \begin{equation*}
     Z\mathcal{E}_{\Lambda,\rm{KL}}(\gamma|\mu,\nu)+2\Lambda (Z\log Z-Z)+2\Lambda = 2\Lambda(1- e^{-\frac{\mathcal{E}_{    \textrm{   min }  }   }{2\Lambda}     } )
  \end{equation*}
  However, we have:
  \begin{equation}
     Z\mathcal{E}_{\Lambda,\rm{KL}}(\gamma|\mu,\nu)+2\Lambda (Z\log Z-Z)+2\Lambda  \geq 2\Lambda( 1 - e^{-\frac{\mathcal{E}_{\Lambda,\rm{KL}}(\gamma|\mu,\nu)}{2\Lambda}  }). \label{intermediate inq}
  \end{equation}
  This gives:
  \begin{equation*}
    2\Lambda(1- e^{-\frac{\mathcal{E}_{    \textrm{   min }  }   }{2\Lambda}       }) \geq    2\Lambda(1 - e^{-\frac{\mathcal{E}_{\Lambda,\rm{KL}}(\gamma|\mu,\nu)}{2\Lambda} }) \quad \Rightarrow \quad \mathcal{E}_{\Lambda,\rm{KL}}(\gamma|\mu,\nu)\leq
    \mathcal{E}_{    \textrm{   min }  }. 
  \end{equation*}
  As a result, we have: $\mathcal{E}_{\Lambda,\rm{KL}}(\gamma|\mu,\nu) = \mathcal{E}_{    \textrm{   min }   } $, i.e. $\gamma$ solves problem \eqref{constrained-ET}. And inequality \eqref{intermediate inq} becomes equality, this shows $Z = e^{-\frac{\mathcal{E}_{ \textrm{ min    }} }{2\Lambda}     }$. 
\end{proof}

The following corollary shows the uniqueness of constrained Entropy Transport problem.
\begin{corollary}
  The constrained Entropy Transport problem admits a unique optimal solution.
\end{corollary}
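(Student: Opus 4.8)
The plan is to deduce uniqueness for the constrained problem \eqref{constrained-ET} from the already-established uniqueness for the general Entropy Transport problem \eqref{general ET}, using the exact scaling correspondence furnished by Theorem \ref{rel}. I emphasize at the outset that a direct strict-convexity argument is \emph{not} available: although $\mathcal{E}_{\Lambda,\mathrm{KL}}(\cdot|\mu,\nu)$ is convex on $\mathcal{P}(\mathbb{R}^d\times\mathbb{R}^d)$ by the convexity theorem, it fails to be strictly convex there, since the cost term is linear in $\gamma$ and the two KL terms depend on $\gamma$ only through its marginals $\pi_{1\#}\gamma,\pi_{2\#}\gamma$ (so any two distinct couplings with identical marginals and identical transport cost give the same value). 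Uniqueness must therefore be imported from the unconstrained, $\mathcal{M}$-valued problem, where the superlinear strictly convex integrand genuinely pins down the minimizer.

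First I would verify that the KL choice $F(s)=s\log s-s+1$ meets the hypotheses of the existence--uniqueness theorem for \eqref{general ET} (Theorem \ref{thm existence uniqueness sol }). Indeed $F''(s)=1/s>0$ on $(0,\infty)$, so $F$ is strictly convex, and $F(s)/s=\log s-1+1/s\to+\infty$ as $s\to+\infty$, so $F$ is superlinear; combined with assumption (A) (strictly convex $h$) and assumption (B) ($\mu,\nu\ll\mathscr{L}^d$), all the required conditions hold. A finite-energy competitor is supplied by $\gamma=\mu\otimes\nu$, as already noted when showing $\mathcal{E}_{\min}>-\infty$. Consequently \eqref{general ET} admits a \emph{unique} minimizer $\tilde\gamma$.

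The core step is then the transfer. Suppose $\gamma_1,\gamma_2\in\mathcal{P}(\mathbb{R}^d\times\mathbb{R}^d)$ both solve \eqref{constrained-ET}, so that $\mathcal{E}_{\Lambda,\mathrm{KL}}(\gamma_i|\mu,\nu)=\mathcal{E}_{\min}$ for $i=1,2$. Setting $\tilde\gamma_i=Z\gamma_i$ with $Z=e^{-\mathcal{E}_{\min}/(2\Lambda)}$ and invoking the scaling identity established inside the proof of Theorem \ref{rel}, namely $\mathcal{E}_{\Lambda,\mathrm{KL}}(Z\gamma_i|\mu,\nu)=Z\,\mathcal{E}_{\min}+2\Lambda(Z\log Z-Z)+2\Lambda$, a direct substitution of $Z=e^{-\mathcal{E}_{\min}/(2\Lambda)}$ shows both $\tilde\gamma_1$ and $\tilde\gamma_2$ attain the common value $2\Lambda(1-e^{-\mathcal{E}_{\min}/(2\Lambda)})$, which Theorem \ref{rel} identifies as the infimum of \eqref{general ET}. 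Hence $\tilde\gamma_1$ and $\tilde\gamma_2$ are both minimizers of the general problem, and uniqueness of $\tilde\gamma$ forces $Z\gamma_1=Z\gamma_2$; since $Z>0$ we conclude $\gamma_1=\gamma_2$. The step I expect to require the most care is precisely this converse direction of the correspondence: confirming that rescaling a constrained minimizer by $Z$ truly lands on the general minimizer (i.e. that the value $2\Lambda(1-e^{-\mathcal{E}_{\min}/(2\Lambda)})$ is both attained and equal to the unconstrained infimum), and that the constant $Z$ is the same for \emph{every} constrained minimizer — which it is, since it depends only on $\mathcal{E}_{\min}$ — so that the map is a genuine bijection rather than a one-way assignment.
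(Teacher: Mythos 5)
Your proof is correct and follows essentially the same route as the paper's own argument: both transfer the question to the general problem \eqref{general ET} by rescaling any constrained minimizer by $Z=e^{-\mathcal{E}_{\min}/(2\Lambda)}$, use the scaling identity from the proof of Theorem \ref{rel} to show the rescaled measures attain the unconstrained infimum $2\Lambda(1-e^{-\mathcal{E}_{\min}/(2\Lambda)})$, and then invoke the uniqueness of Theorem \ref{thm existence uniqueness sol } to conclude. Your version is in fact somewhat more careful than the paper's (explicitly checking the hypotheses on $F$ and noting why a direct strict-convexity argument on $\mathcal{P}(\mathbb{R}^d\times\mathbb{R}^d)$ cannot work), but the underlying idea is identical.
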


\begin{proof}
We still assume that $\tilde{\gamma}$ and $\gamma$ are solutions to \eqref{general ET} and \eqref{constrained-ET} respectively as stated in Theorem \ref{rel}.  Suppose despite $\gamma$, we have another $\gamma'\in\mathcal{P}(\mathbb{R}^d\times\mathbb{R}^d)$ that also solves \eqref{constrained-ET}
. Set $Z = e^{-\frac{\mathcal{E}_{\textrm{min } }}{2\Lambda}       }$, we can verify that $\mathcal{E}_{\Lambda,\textrm{KL}}(Z\gamma|\mu,\nu) = \mathcal{E}_{\Lambda,\rm{KL}}(Z\gamma'|\mu,\nu)$. This means that $Z\gamma'\neq Z\gamma$ (i.e. $Z\gamma' \neq \tilde{\gamma}$) is another solution to problem \eqref{general ET}. This avoids the uniqueness stated in Theorem \ref{thm existence uniqueness sol }.
\end{proof}

The following theorem characterizes the structure of the optimal solution to problem $\inf_{\gamma\in\mathcal{M}(\mathbb{R}^d\times\mathbb{R}^d)}\left\{\mathcal{E}_{\Lambda,\rm{KL}}(\gamma|\mu,\nu)\right\}$.
\begin{theorem}\label{char optimal solution to general et}
  We assume $\textrm{supp}(\mu)=\textrm{supp}(\nu)=\mathbb{R}^d$. Suppose $\tilde{\gamma}$ is the solution to the Entropy Transport problem:
  \begin{equation}
    \inf_{\gamma\in\mathcal{M}(\mathbb{R}^d\times\mathbb{R}^d)}\left\{\mathcal{E}_{\Lambda,\rm{KL}}(\gamma|\mu,\nu)\right\}.\label{general KL ET pro}
  \end{equation}
  Then there exist certain $\varphi,\psi \in B(\mathbb{R}^d;\mathbb{R})  $ satisfying: 
  $\varphi(x)+\psi(y)\leq c(x,y)$ for any $x,y\in\mathbb{R}^d$ with:
  $\varphi(x)+\psi(y)= c(x,y)$ $\tilde{\gamma}$- almost surely. (Or equivalently, $\tilde{\gamma}$ is concentrated on the set $\{(x,y)|\varphi(x)+\psi(y)=c(x,y)\}$.)
  And:
  \begin{equation*}
     \frac{ d \pi_{1\#}\tilde{\gamma }  }{d\mu} = e^{ - \frac{\varphi}{\Lambda}  }  \quad \frac{d\pi_{2   \# }  \tilde{ \gamma}}{ d \nu} = e^{-\frac{\psi}{\Lambda}}.
  \end{equation*}
 Here $B(\mathbb{R}^d;\mathbb{R})$ denotes the space of Borel functions $f:\mathbb{R}^d\rightarrow \mathbb{R} $.
\end{theorem}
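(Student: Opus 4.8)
The plan is to derive the stated identities as the Euler--Lagrange (first--order optimality) conditions for the minimizer $\tilde\gamma$ of $\mathcal{E}_{\Lambda,\mathrm{KL}}(\cdot|\mu,\nu)$ over $\mathcal{M}(\mathbb{R}^d\times\mathbb{R}^d)$, obtained by a first--variation argument. First I would record that, because $\tilde\gamma$ has finite energy, both Kullback--Leibler terms are finite, so $\tilde\gamma_1 := \pi_{1\#}\tilde\gamma \ll \mu$ and $\tilde\gamma_2 := \pi_{2\#}\tilde\gamma \ll \nu$; write $u = \tfrac{d\tilde\gamma_1}{d\mu}$, $v = \tfrac{d\tilde\gamma_2}{d\nu}$ and set, as candidates, $\varphi := -\Lambda\log u$ and $\psi := -\Lambda\log v$. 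By construction these give $\tfrac{d\pi_{1\#}\tilde\gamma}{d\mu} = e^{-\varphi/\Lambda}$ and $\tfrac{d\pi_{2\#}\tilde\gamma}{d\nu} = e^{-\psi/\Lambda}$, so the two marginal identities hold tautologically once $\varphi,\psi$ are shown to be genuine real--valued Borel functions.

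The core step is the first variation. For an admissible signed perturbation $\eta$ (chosen absolutely continuous with respect to $\mu\otimes\nu$ so that all divergence terms stay finite, and so that $\tilde\gamma + \epsilon\eta \in \mathcal{M}$ for small $\epsilon$), I would compute, using $F'(s)=\log s$,
\[
\frac{d}{d\epsilon}\Big|_{\epsilon=0}\mathcal{E}_{\Lambda,\mathrm{KL}}(\tilde\gamma+\epsilon\eta\,|\,\mu,\nu) = \iint \big(c(x,y)-\varphi(x)-\psi(y)\big)\,d\eta(x,y).
\]
Minimality of $\tilde\gamma$ then forces sign conditions on this derivative. Taking $\eta\ge 0$ (for which $\epsilon>0$ is admissible) yields $\iint(c-\varphi-\psi)\,d\eta\ge 0$ for every nonnegative $\eta\ll\mu\otimes\nu$, hence $c(x,y)-\varphi(x)-\psi(y)\ge 0$ for $(\mu\otimes\nu)$--a.e. $(x,y)$, which (since $\mathrm{supp}\,\mu=\mathrm{supp}\,\nu=\mathbb{R}^d$) is the inequality on a dense full--measure set; taking two--sided perturbations $\eta = g\,\tilde\gamma$ with bounded $g$ (where $\tilde\gamma$ dominates $\eta$, so both signs of $\epsilon$ are admissible) forces the derivative to vanish, giving $\int(c-\varphi-\psi)\,g\,d\tilde\gamma=0$ for all bounded $g$ and hence $\varphi(x)+\psi(y)=c(x,y)$ $\tilde\gamma$--almost surely.

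It remains to produce honest real--valued Borel potentials with the inequality holding for every $(x,y)$, and this is where I expect the real work. First I would rule out $u=0$ on a $\mu$--positive set: since $F'(0^+)=\log 0^+ = -\infty$ (superlinear gain from creating mass where the marginal vanishes), injecting an absolutely continuous sliver of mass there changes the energy by $\epsilon\log\epsilon\cdot(\text{positive})+O(\epsilon)<0$ for small $\epsilon$, contradicting optimality; hence $u>0$ $\mu$--a.e. and $v>0$ $\nu$--a.e., so $\varphi,\psi$ are finite a.e. To pass from the $(\mu\otimes\nu)$--a.e. inequality to one valid for \emph{all} $x,y$ and to fix everywhere--defined representatives, I would replace $\psi$ by its $c$--transform $\psi^{c}(x)=\inf_y\big(c(x,y)-\psi(y)\big)$ and then $\varphi$ likewise, using the lower semicontinuity of $c$ (and, under assumption (A), $c(x,y)=h(x-y)$ with $h$ continuous) to guarantee the infima are Borel and the pointwise inequality $\varphi(x)+\psi(y)\le c(x,y)$ holds everywhere; since these modifications only change $\varphi,\psi$ on $\mu$-- and $\nu$--null sets, the marginal density identities and the $\tilde\gamma$--a.s. equality are preserved. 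The main obstacles are thus (i) justifying differentiation under the integral and the admissibility of perturbations that keep every KL term finite, and (ii) this a.e.--to--everywhere upgrade together with the positivity and finiteness of the marginal densities, which is what turns the formal Euler--Lagrange relations into the clean statement of the theorem.
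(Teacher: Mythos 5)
Your route is genuinely different from the paper's, and its core is fine, but it breaks at the point you yourself flag as ``the real work.'' The paper does not take a first variation at all: it imports two results from the literature --- strong duality for the entropy-transport problem (\cite{chizat2018unbalanced}, Cor.~5.9, stated as Theorem~\ref{dual}) and, crucially, the \emph{existence of an optimal Borel dual pair} $(\varphi,\psi)$ with $\varphi\oplus\psi\le c$ holding \emph{everywhere} (\cite{liero2018optimal}, Sec.~4.4, stated as Theorem~\ref{thm existence dual pair}) --- and then gets all three conclusions by one complementary-slackness computation: with $F(s)=\Lambda(s\log s-s+1)$, $F^*(\xi)=\Lambda(e^{\xi/\Lambda}-1)$, equality of primal and dual values rearranges into
\[
\iint \bigl(c-\varphi-\psi\bigr)\,d\tilde\gamma+\int\bigl(F(\sigma_1)+F^*(-\varphi)+\varphi\sigma_1\bigr)\,d\mu+\int\bigl(F(\sigma_2)+F^*(-\psi)+\psi\sigma_2\bigr)\,d\nu=0,
\]
where each integrand is nonnegative (the last two by Fenchel--Young), so each vanishes, and the equality case of Fenchel--Young gives $\sigma_1=e^{-\varphi/\Lambda}$, $\sigma_2=e^{-\psi/\Lambda}$. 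Your primal Euler--Lagrange argument --- the first-variation identity, the sign conditions from one- and two-sided perturbations, and the $\epsilon\log\epsilon$ argument forcing $u>0$ $\mu$-a.e.\ and $v>0$ $\nu$-a.e.\ --- is correct in outline and buys a self-contained derivation of the a.e.\ inequality, the $\tilde\gamma$-a.s.\ equality, and the marginal identities, without invoking any duality theorem.

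The genuine gap is the a.e.-to-everywhere upgrade, and it is not a technicality: it is precisely the content of the dual-attainment theorem the paper cites. Your $\varphi=-\Lambda\log u$ and $\psi=-\Lambda\log v$ are only determined up to $\mu$- and $\nu$-null sets, and your inequality holds only $(\mu\otimes\nu)$-a.e. The $c$-transform $\psi^{c}(x)=\inf_{y}\bigl(c(x,y)-\psi(y)\bigr)$ ranges over \emph{all} $y$, including $\nu$-null sets on which the chosen Borel version of $\psi$ is completely unconstrained (and, before any fix, could be arbitrarily large there); consequently $\psi^{c}$ can drop strictly below $\varphi$ on a set of positive $\mu$-measure, or even equal $-\infty$. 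So the claim that the $c$-transform step ``only changes $\varphi,\psi$ on null sets'' is unjustified --- and it must fail to be harmless in general, because any change of $\varphi$ on a $\mu$-positive set destroys the identity $d\pi_{1\#}\tilde\gamma/d\mu=e^{-\varphi/\Lambda}$ that pins $\varphi$ down a.e. (A secondary issue: an uncountable infimum of Borel functions need not be Borel, so $\psi^{c}\in B(\mathbb{R}^d;\mathbb{R})$ itself needs an argument; $c$ being continuous does not help because $\psi$ is not.) To close the gap you would need an actual construction of everywhere-admissible representatives --- e.g.\ a R\"uschendorf-type construction of potentials from the set $\{c=\varphi\oplus\psi\}$ carrying $\tilde\gamma$, which is real work --- or simply do what the paper does: take $(\varphi,\psi)$ from the attainment theorem of \cite{liero2018optimal} first, and only then identify the marginal densities.
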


This theorem can be extended to more general cases, see \cite{liero2018optimal}, section 4. Here we give a direct proof  based on the following theorems:
\begin{theorem}[Dual Problem of Entropy Transport problem \eqref{general KL ET pro}]\label{dual}
  Consider the functional:
  \begin{equation}
     \mathcal{D}_{\Lambda,\rm{KL dual}}(u,v|\mu,\nu) = 
     \int\Lambda (1 -e^{-\frac{u}{\Lambda} } )~d\mu + \int \Lambda( 1 - e^{-\frac{v}{\Lambda}} )~d\nu.
     \label{functional et dual}
  \end{equation}
  And the optimization problem (here $\varphi\oplus\psi\leq c$ denotes $\varphi(x)+\psi(y)\leq c(x,y)$ for any $x,y\in\mathbb{R}^d$):
  \begin{equation}
     \sup_{ \substack{ \text{$u,v\in C(\mathbb{R}^d)$} \\ \text{$u\oplus v\leq c$}\\
     } }\left\{\mathcal{D}_{\Lambda,\rm{KL dual}}(u,v|\mu,\nu)\right\}.  \label{ET Dual}
  \end{equation}
  
  Then \eqref{ET Dual} is the dual problem of primal Entropy Transport problem \eqref{general KL ET pro}. We have the strong duality:
  \begin{equation}
     \inf_{\gamma\in\mathcal{P}(\mathbb{R}^d\times\mathbb{R}^d)}\left\{  \mathcal{E}_{\Lambda,\rm{KL}}(\gamma|\mu,\nu)  \right\} = \sup_{u,v\in C(\mathbb{R}^d),~ u\oplus v\leq  c  }\left\{\mathcal{D}_{\Lambda,\rm{KL dual}}(u, v |\mu,\nu)\right\}.   \label{strong- duality}
  \end{equation}
  
\end{theorem}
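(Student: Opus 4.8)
The plan is to derive \eqref{strong- duality} by convex (Fenchel--Rockafellar) duality, the exponential terms of $\mathcal{D}_{\Lambda,\rm{KL dual}}$ appearing as the Legendre transform of the Kullback--Leibler integrand. The key computation is that the entropy density $F(s)=s\log s-s+1$ underlying $D_{\rm{KL}}$ has conjugate $F^{*}(p)=\sup_{s\ge0}(ps-F(s))=e^{p}-1$, which gives the Donsker--Varadhan variational identity
\begin{equation*}
\Lambda D_{\rm{KL}}(\sigma\|\mu)=\sup_{\phi\in C_b(\mathbb{R}^d)}\left\{-\int\phi\,d\sigma+\int\Lambda\bigl(1-e^{-\phi/\Lambda}\bigr)\,d\mu\right\}.
\end{equation*}
Inserting this (with test functions $u,v$) for both marginal penalties and using $\int u\,d\pi_{1\#}\gamma=\iint u(x)\,d\gamma$, the primal energy becomes an $\inf_\gamma\sup_{u,v}$ of an integrand that is linear in $\gamma$ and concave in $(u,v)$; the whole argument then reduces to interchanging the infimum and supremum.

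First I would dispatch the easy half, weak duality. For any admissible pair with $u\oplus v\le c$ and any $\gamma\ge0$, choosing $\phi=u$ and $\phi=v$ in the identity above bounds the two KL terms from below, while $\iint c\,d\gamma\ge\iint(u\oplus v)\,d\gamma=\int u\,d\pi_{1\#}\gamma+\int v\,d\pi_{2\#}\gamma$. Summing the three estimates, the linear coupling terms cancel exactly and leave $\mathcal{E}_{\Lambda,\rm{KL}}(\gamma|\mu,\nu)\ge\mathcal{D}_{\Lambda,\rm{KL dual}}(u,v|\mu,\nu)$; taking $\inf_\gamma$ and $\sup_{u,v}$ yields the inequality ``$\ge$'' in \eqref{strong- duality}.

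The substance is the reverse inequality (no duality gap). Here I would apply the Fenchel--Rockafellar theorem in the pairing between $C_b(\mathbb{R}^d\times\mathbb{R}^d)$ and the measures $\mathcal{M}(\mathbb{R}^d\times\mathbb{R}^d)$: write the primal value as $\inf_\gamma\{\Theta_1(\gamma)+\Theta_2(\gamma)\}$, with $\Theta_1$ carrying the cost integral over the positive cone and $\Theta_2$ the two lower-semicontinuous convex marginal penalties, and compute the conjugates. The inner minimization over nonnegative $\gamma$ is exactly what forces the hard constraint $u\oplus v\le c$ (otherwise the value is $-\infty$) and returns precisely $\mathcal{D}_{\Lambda,\rm{KL dual}}$. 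Equivalently one may interchange $\inf_\gamma\sup_{u,v}$ via Sion's minimax theorem. Either route hinges on a constraint qualification: a finite-energy competitor (e.g.\ $\gamma=\mu\otimes\nu$, whose energy is $\iint c\,d\mu\,d\nu$ and is finite under the running hypotheses) at which the penalty functional is continuous, which is what excludes the gap.

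I expect the main obstacle to be two intertwined technical points caused by the noncompactness of $\mathbb{R}^d$: justifying the $\inf$--$\sup$ interchange, and ensuring the dual optimizers may be taken in $C(\mathbb{R}^d)$ rather than merely bounded measurable. For the interchange I would establish relative compactness (tightness) of a minimizing sequence for $\gamma$ in the weak-$*$ topology, drawing coercivity from the superlinearity of $F$ together with the nonnegativity and lower semicontinuity of $c$; for the regularity of the potentials I would run a $c$-transform (double convexification) argument as in classical Kantorovich duality, replacing an admissible $(u,v)$ by its $c$-conjugates without decreasing $\mathcal{D}_{\Lambda,\rm{KL dual}}$ and thereby producing continuous maximizers, so that the supremum over $C(\mathbb{R}^d)$ is attained and matches the primal value.
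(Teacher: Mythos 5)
Your outline is essentially correct, but you should know that the paper does not actually prove this theorem: its entire ``proof'' is a citation to Corollary 5.9 of \cite{chizat2018unbalanced}, obtained by the substitution $c\mapsto c/\Lambda$, $2\delta^2\mapsto\Lambda$. The machinery you describe --- Fenchel--Young giving weak duality with exact cancellation of the linear terms, the conjugate $F^*(p)=e^p-1$ producing the exponential dual terms, minimization over the cone of nonnegative measures producing the hard constraint $u\oplus v\le c$, and a Fenchel--Rockafellar/minimax interchange for the reverse inequality --- is precisely what sits inside that citation (and inside Theorem 4.11 of \cite{liero2018optimal}). So you are reconstructing the outsourced argument rather than finding a different one: the citation buys the paper brevity and lets it sidestep the noncompactness technicalities, while your route would buy a self-contained proof, provided the repairs you flag are actually carried out.

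Two caveats you should make explicit. First, your Lagrangian takes the inner infimum over \emph{nonnegative} measures, so what your argument proves is duality for \eqref{general KL ET pro}, i.e.\ with $\inf_{\gamma\in\mathcal{M}(\mathbb{R}^d\times\mathbb{R}^d)}$ on the left. That is the version announced in the theorem's title and the version used later (in proving Theorem \ref{char optimal solution to general et}, the optimizer $\tilde\gamma$ over $\mathcal{M}$ satisfies $\mathcal{E}_{\Lambda,\rm{KL}}(\tilde\gamma|\mu,\nu)=\mathcal{D}_{\Lambda,\rm{KL dual}}(\varphi,\psi|\mu,\nu)$), but it is \emph{not} the literal left-hand side of \eqref{strong- duality}: the ``$\gamma\in\mathcal{P}$'' printed there is evidently a typo, since by Theorem \ref{rel} the two infima are related by $\inf_{\mathcal{M}}\mathcal{E}_{\Lambda,\rm{KL}}=2\Lambda(1-e^{-\mathcal{E}_{\min}/(2\Lambda)})$, which differs from $\mathcal{E}_{\min}=\inf_{\mathcal{P}}\mathcal{E}_{\Lambda,\rm{KL}}$ unless $\mathcal{E}_{\min}=0$. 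Indeed, had you restricted the inner infimum to probability measures, you would have obtained $\inf_{x,y}\{c(x,y)-u(x)-v(y)\}$ instead of a constraint, and a different dual. Second, the pairing you invoke is not a dual pairing on a noncompact space: the Banach dual of $C_b(\mathbb{R}^d\times\mathbb{R}^d)$ strictly contains $\mathcal{M}(\mathbb{R}^d\times\mathbb{R}^d)$ (finitely additive measures), so the naive Fenchel--Rockafellar application does not close. This is in fact also a weakness of the paper's own citation, since \cite{chizat2018unbalanced} derive the duality in a compact setting; the noncompact case is covered by \cite{liero2018optimal}. Your fallback --- Sion's minimax after restricting $\gamma$ to a sublevel set that is weakly compact because the KL penalties against the tight measures $\mu,\nu$ bound the total mass and force uniform tightness of both marginals, hence of $\gamma$ --- is the right repair, and it is where the genuine work of a self-contained proof lies.
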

The proof of Theorem \ref{dual} can be found in \cite{chizat2018unbalanced}, Corollary 5.9. One only need to substitute the cost $c_l(x,y)$ and coefficient $2\delta^2$ in their argument by $\frac{c(x,y)}{\Lambda}$ and $\Lambda$ used in our discussion to get the result.
\begin{theorem}[Existence of dual pair]\label{thm existence dual pair}
There exists dual pairs $(\varphi,\psi)\in B(\mathbb{R}^d;\mathbb{R})\times B(\mathbb{R}^d;\mathbb{R})$ satisfying $\varphi\oplus\psi\leq c$ on $\mathbb{R}^d\times\mathbb{R}^d$, such that:
\begin{equation*}
  \mathcal{D}_{\Lambda,\rm{KL dual}}(\varphi,\psi|\mu,\nu) = \sup_{ u , v\in C(\mathbb{R}^d),~ u\oplus v \leq  c  }\left\{\mathcal{D}_{\Lambda,\rm{KL dual}}(u,v|\mu,\nu)\right\}.
\end{equation*}
\end{theorem}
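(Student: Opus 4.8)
The plan is to prove attainment of the dual supremum by the direct method of the calculus of variations, using two structural features of $\mathcal{D}_{\Lambda,\rm{KL dual}}$: it is concave and monotone increasing in each argument (because $s\mapsto 1-e^{-s/\Lambda}$ is increasing and concave), and it is bounded above by $\Lambda(\mu(\mathbb{R}^d)+\nu(\mathbb{R}^d))=2\Lambda$. First I would reduce the admissible class from continuous pairs to $c$-concave pairs. For an admissible $(u,v)$ define the $c$-transforms $u^c(y)=\inf_x(c(x,y)-u(x))$ and $v^c(x)=\inf_y(c(x,y)-v(y))$; since $u^c$ is the largest function for which $(u,u^c)$ stays admissible and the objective is increasing, replacing $v$ by $u^c$ and then $u$ by $(u^c)^c$ never decreases $\mathcal{D}_{\Lambda,\rm{KL dual}}$. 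Hence the supremum over continuous admissible pairs equals the supremum over $c$-concave pairs. Each such function is an infimum of the continuous maps $x\mapsto c(x,y)-(\cdot)$, so it is a priori upper semicontinuous, hence Borel; and because $c(x,y)=h(x-y)$ with $h$ finite convex is continuous, the relevant infima are effectively taken over a bounded set of competitors, which upgrades upper semicontinuity to genuine continuity and a local modulus of continuity inherited from $h$.

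The heart of the argument is compactness of a maximizing sequence $(\varphi_n,\psi_n)$ of $c$-concave pairs. Upper bounds are automatic from $c$-concavity together with the finiteness of $h$, while the lower bounds are forced by the functional itself: the pair $(0,0)$ is admissible (as $c\ge 0$) and gives value $0$, so any maximizing sequence has $\mathcal{D}_{\Lambda,\rm{KL dual}}(\varphi_n,\psi_n)\ge 0$ eventually; since each of the two integrals is $\le\Lambda$, this yields a uniform bound on $\int e^{-\varphi_n/\Lambda}\,d\mu$ and $\int e^{-\psi_n/\Lambda}\,d\nu$, which, combined with the local equicontinuity above, prevents $\varphi_n,\psi_n$ from diverging to $-\infty$ on any ball charged by $\mu,\nu$. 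A diagonal argument over a compact exhaustion of $\mathbb{R}^d$ then extracts a subsequence converging locally uniformly to a limit pair $(\varphi,\psi)$.

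Finally I would pass to the limit. The constraint survives pointwise convergence: $\varphi_n(x)+\psi_n(y)\le c(x,y)$ for all $n$ gives $\varphi(x)+\psi(y)\le c(x,y)$, so $(\varphi,\psi)$ is admissible (and continuous, hence a legitimate competitor in the continuous supremum). For the objective, each integrand $\Lambda(1-e^{-\varphi_n/\Lambda})$ is dominated above by the constant $\Lambda\in L^1(\mu)$, so the reverse Fatou lemma gives $\limsup_n\mathcal{D}_{\Lambda,\rm{KL dual}}(\varphi_n,\psi_n|\mu,\nu)\le\mathcal{D}_{\Lambda,\rm{KL dual}}(\varphi,\psi|\mu,\nu)$; as the left side equals the supremum and the right side is bounded by it, equality holds and $(\varphi,\psi)$ attains it. Its upper semicontinuity guarantees the claimed Borel regularity.

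The main obstacle I anticipate is the compactness step on the non-compact domain $\mathbb{R}^d$: securing the lower a priori bounds and a usable modulus of continuity simultaneously. When $h$ is only strictly convex but not globally Lipschitz (for instance $h(z)=|z|^2$), $c$-concave functions are merely locally equicontinuous, and one must verify that the near-optimal competitor $y$ in $\psi_n^c(x)=\inf_y(h(x-y)-\psi_n(y))$ stays in a bounded set depending only on $x$ and on the tails of $\mu,\nu$, so that the maximizing sequence cannot leak mass to infinity. Controlling this leakage is the delicate point; if a self-contained treatment proves cumbersome, one can instead invoke the existence-of-potentials results of \cite{liero2018optimal} (Section 4) and \cite{chizat2018unbalanced}, whose entropy-transport duality framework covers exactly this setting under assumptions \eqref{additional cond}.
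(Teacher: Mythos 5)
The paper does not actually prove this statement: its entire ``proof'' is a pointer to the general existence theory for optimal entropy--transport potentials in \cite{liero2018optimal}, Section 4.4 (just as Theorem \ref{dual} is delegated to \cite{chizat2018unbalanced}). Your proposal is therefore a genuinely different route --- a self-contained direct-method argument --- and its skeleton is sound and standard: the monotonicity and concavity of $s\mapsto\Lambda(1-e^{-s/\Lambda})$ do justify the double $c$-transform reduction; the competitor $(0,0)$ together with the upper bound $\Lambda$ on each integral correctly yields the uniform bound $\int e^{-\varphi_n/\Lambda}\,d\mu\le 2$ along a maximizing sequence; the constraint $\varphi\oplus\psi\le c$ passes to pointwise limits; and reverse Fatou with the dominating constant $\Lambda\in L^1(\mu)$ closes the limit passage. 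For quadratic cost your argument even produces a \emph{continuous} maximizer, which is stronger than the Borel regularity the theorem asserts. What the citation buys, and what your sketch still owes, is exactly the compactness step you flag: under the paper's assumption (A) the cost is $h(x-y)$ with $h$ strictly convex but not globally Lipschitz, so $c$-concave functions are only locally equicontinuous with constants depending on the very pointwise bounds one is trying to establish; resolving this circularity (one-point lower bounds from the exponential integral, upper bounds from admissibility, then semiconvexity-type estimates) is routine for $h(z)=|z|^2$ but is not written down for general $h$. A second, unflagged subtlety in the same vein: your reduction implicitly needs that the supremum over $c$-concave pairs --- which are a priori only upper semicontinuous --- does not exceed the supremum over \emph{continuous} pairs appearing in the statement; this is immediate when $c$-transforms are continuous (quadratic cost) but requires an approximation argument otherwise. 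Since your declared fallback for precisely these points is the citation to \cite{liero2018optimal}, i.e.\ the paper's entire proof, the proposal taken as a whole is correct, and is more informative than the paper's treatment wherever the direct argument goes through.
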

This result is a special case for the general results on existence of optimal dual pairs (\cite{liero2018optimal}, section 4.4).

\begin{proof}[Proof of Theorem \ref{char optimal solution to general et} ]
Recall $\tilde{\gamma}$ is the optimal solution to \eqref{general KL ET pro} and we denote $\varphi,\psi\in B(\mathbb{R}^d;\mathbb{R})$ as the optimal dual pair stated in Theorem \ref{thm existence dual pair}. 
Now we denote $F(s)=\Lambda(s\log s -s +1)$ and the Legendre transform of $F$ is $F^*(\xi)=\Lambda(e^{\frac{\xi}{\Lambda}}-1)$\footnote{According to the definition of Legendre transform, $F^*(\xi)=\sup_{\xi}\left\{\xi\cdot s-F(s)\right\}=\sup_\xi\left\{\xi\cdot s - \Lambda(s\log s -s +1)\right\}=\Lambda (e^{\frac{\xi}{\Lambda}}-1)$.}. Since $\tilde{\gamma}$ is the optimal solution to problem \eqref{general KL ET pro}, the marginals of $\tilde{\gamma}$ must satisfy $\pi_{1\#}\tilde{\gamma}\ll\mu$, $\pi_{2\#}\tilde{\gamma}\ll\nu$. We then denote $\sigma_1 = \frac{d\pi_{1 \#}\tilde{\gamma}}{d\mu}$ and $\sigma_2 = \frac{d\pi_{2 \#}\tilde{\gamma}}{d\nu}$. We directly verify
\begin{align*}
  \mathcal{E}_{\Lambda,\rm{KL}}(\tilde{\gamma}|\mu,\nu) =& \iint c(x,y)d\tilde{\gamma}+\int F(\sigma_1)d\mu + \int F(\sigma_2)d\nu, \\
  \mathcal{D}_{\Lambda,\rm{KL dual}}(
  \varphi,\psi|\mu,\nu) =&\int -F^*(-\varphi)d\mu + \int-F^*(-\psi)d\nu.
\end{align*}
Since we have (By Theorem \ref{dual}, Theorem \ref{thm existence dual pair}):
\begin{equation*}
  \mathcal{E}_{\Lambda,\rm{KL}}(\tilde{\gamma}|\mu,\nu) = \mathcal{D}_{\Lambda,\rm{KL dual}}(
  \varphi,\psi|\mu,\nu).
\end{equation*}
We will know:
\begin{equation*}
  \iint c(x,y)d\tilde{\gamma} +\int F(\sigma_1)d\mu + \int F(\sigma_2)d\nu = \int -F^*(-\varphi)d\mu + \int-F^*(-\psi)d\nu.
\end{equation*}
This leads to:
\begin{equation}
  \iint (c(x,y)-\varphi(x)-\psi(y))~d\tilde{\gamma} + \int (F(\sigma_1)+F^*(-\varphi)+\varphi\sigma_1)d\mu + \int (F(\sigma_2)+F^*(-\psi)+\psi\sigma_2)d\nu = 0.   \label{prove3intrgls}
\end{equation}
Since $\varphi\oplus\psi\leq c$ we know $c(x,y)-\varphi(x)-\psi(y)\geq 0$; On the other hand, by the definition of Legendre transform, we know $F^*(-\varphi)\geq -\varphi\sigma_1-F(\sigma_1)$, which equivalent to $F(\sigma_1)+F^*(-\varphi)+\varphi\sigma_1\geq 0$. Similarly, $F(\sigma_2)+F^*(-\psi)+\psi\sigma_2\geq 0$. Thus, the three integrals in \eqref{prove3intrgls} are non negative and thus all equal to $0$. The first integral equals $0$ leads to:
\begin{equation*}
  c(x,y)=\varphi(x)+\psi(y) \quad\tilde{\gamma}-\textrm{almost  surely}.
\end{equation*}
The second integral equals $0$ leads to:
\begin{equation*}
 F(\sigma_1(x))+F^*(-\varphi(x))+\varphi(x)\sigma_1(x)=0 \quad \textrm{on}~ \mathbb{R}^d.
\end{equation*}
This gives $\sigma_1(x)=e^{-\frac{\varphi(x)}{\Lambda}}$\footnote{$F(s)+F^*(\xi)-\xi s = 0$ gives $F^*(\xi)=\xi s-F(s)$, i.e. $s = \max_t\{\xi t - F(t)\} = (F')^{-1}(\xi)$. In this case, $s=\sigma_1(x)$, $\xi=-\varphi(x)$, $F(t)=\Lambda(t\log t-t+1)$, thus $\sigma_1(x)=e^{-\frac{\varphi(x)}{\Lambda}}$ } on $\mathbb{R}^d$, this gives $\frac{d\pi_{1 \#}\tilde{\gamma}}{d\mu}=e^{-\frac{\varphi}{\Lambda}}$. Similarly, we can also prove  $\frac{d\pi_{2 \#}\tilde{\gamma}}{d\nu}=e^{-\frac{\psi}{\Lambda}}$.

\end{proof}

The following theorem characterize the structure of optimal distribution of the constrained Entropy Transport problem \eqref{constrained-ET}. It is direct result of Theorem \ref{rel} and Theorem \ref{char optimal solution to general et}:
\begin{theorem}[Characterization of optimal distribution
  $\gamma_{cET}$ to problem \eqref{constrained-ET} ]\label{characterization of optimal gamma}
  Assume $\varphi,\psi$ are the functions mentioned in Theorem \ref{char optimal solution to general et}. Suppose $\gamma_{cET}$ solves the constrained Entropy Transport problem \eqref{constrained-ET}. Then we also have:
  \begin{equation*}
  \varphi(x)+\psi(y)=c(x,y) \quad  \gamma_{c ET} - \textrm{almost surely},
  \end{equation*}
  and
  \begin{equation}
    \pi_{1 \#}\gamma_{c ET}=e^{\frac{   \mathcal{E}_{   \textrm{min } }  - 2\varphi }{2\Lambda}} \mu \quad   \pi_{2 \#}\gamma_{cET} = e^{\frac{   \mathcal{E}_{   \textrm{min } }  - 2\psi }{2\Lambda}}\nu.
    \label{marginal entropy transport on P}
  \end{equation}
\end{theorem}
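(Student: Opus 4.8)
The plan is to obtain this characterization as an immediate consequence of Theorem~\ref{rel} and Theorem~\ref{char optimal solution to general et}, exploiting the fact that $\gamma_{cET}$ and the general Entropy Transport minimizer $\tilde{\gamma}$ differ only by a strictly positive constant factor. First I would invoke Theorem~\ref{char optimal solution to general et} to produce the dual pair $(\varphi,\psi)\in B(\mathbb{R}^d;\mathbb{R})\times B(\mathbb{R}^d;\mathbb{R})$ with $\varphi\oplus\psi\leq c$, for which the unconstrained minimizer $\tilde{\gamma}$ of \eqref{general KL ET pro} is concentrated on the set $\{(x,y):\varphi(x)+\psi(y)=c(x,y)\}$ and has marginal densities $\frac{d\pi_{1\#}\tilde{\gamma}}{d\mu}=e^{-\varphi/\Lambda}$ and $\frac{d\pi_{2\#}\tilde{\gamma}}{d\nu}=e^{-\psi/\Lambda}$. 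These are precisely the $\varphi,\psi$ named in the hypothesis of the present theorem.

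Next I would bring in Theorem~\ref{rel}, which gives $\tilde{\gamma}=Z\gamma$ with $Z=e^{-\mathcal{E}_{\min}/(2\Lambda)}>0$, where $\gamma$ is the constrained solution; by uniqueness of the constrained minimizer (Corollary~\ref{corollary uniqueness constrained - ET}) this $\gamma$ coincides with our $\gamma_{cET}$. Hence $\gamma_{cET}=Z^{-1}\tilde{\gamma}=e^{\mathcal{E}_{\min}/(2\Lambda)}\tilde{\gamma}$. Since $\gamma_{cET}$ is a strictly positive scalar multiple of $\tilde{\gamma}$, the two measures share exactly the same null sets, so the concentration statement $\varphi(x)+\psi(y)=c(x,y)$ holding $\tilde{\gamma}$-almost surely transfers verbatim to hold $\gamma_{cET}$-almost surely.

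For the marginal identities I would use that the push-forward $\pi_{i\#}$ is linear and therefore commutes with multiplication by a positive constant: $\pi_{1\#}\gamma_{cET}=Z^{-1}\pi_{1\#}\tilde{\gamma}=e^{\mathcal{E}_{\min}/(2\Lambda)}\,e^{-\varphi/\Lambda}\,\mu$, and combining the exponents via $\frac{\mathcal{E}_{\min}}{2\Lambda}-\frac{\varphi}{\Lambda}=\frac{\mathcal{E}_{\min}-2\varphi}{2\Lambda}$ yields the first formula in \eqref{marginal entropy transport on P}; the identical computation with $(\psi,\nu)$ in place of $(\varphi,\mu)$ gives the second. There is no genuine obstacle here once Theorems~\ref{rel} and \ref{char optimal solution to general et} are available; the only points warranting a moment's care are checking that scaling by a positive constant preserves both absolute continuity and the almost-sure statement, and confirming that the dual pair $(\varphi,\psi)$ describing $\tilde{\gamma}$ is the same pair used for $\gamma_{cET}$ — which is guaranteed by the scalar relation $\tilde{\gamma}=Z\gamma_{cET}$ together with the hypothesis of the theorem.
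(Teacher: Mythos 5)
Your proposal is correct and takes exactly the route the paper intends: the paper presents this theorem as a ``direct result of Theorem~\ref{rel} and Theorem~\ref{char optimal solution to general et}'' without spelling out the details, and your argument --- transferring the almost-sure identity via equality of null sets under positive scaling, and pushing the scalar $Z^{-1}=e^{\mathcal{E}_{\min}/(2\Lambda)}$ through the linear operator $\pi_{i\#}$ to combine the exponents --- is precisely that omitted derivation, carried out correctly.
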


Recall in Optimal Transport problem \eqref{OT}, we may compare Theorem \ref{characterization of optimal gamma} with the following theorem on Optimal Transport problem \cite{ambrosio2008gradient}:
\begin{theorem}[Characterization of optimal distribution $\gamma_{OT}$ to problem \eqref{OT}]\label{characterization of optimal distribution to Optimal Transport}
If we assume additional condition on the cost function: $c(x,y)\leq a(x)+b(y)$ with $a\in L^1(\mu)$, $b\in L^1(\nu)$. Then there exists an optimal distribution $\gamma_{OT}$ to problem \eqref{OT}. There exist $\varphi,\psi\in C(\mathbb{R}^d)$ such that $\varphi(x)+\psi(y)\leq c(x,y)$ for any $x,y\in\mathbb{R}^d$ with:
\begin{equation*}
  \varphi(x)+\psi(y)=c(x,y) \quad \gamma_{OT}-\textrm{almost surely},
\end{equation*}
and
\begin{equation}
  \pi_{1 \#}\gamma_{OT} = \mu \quad \pi_{2 \#}\gamma_{OT} = \nu. \label{marginal optimal transport on P}
\end{equation}
\end{theorem}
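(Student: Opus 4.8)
The plan is to establish the statement in two stages: first the existence of an optimal plan $\gamma_{OT}$, and then the existence of the continuous potentials $\varphi,\psi$ together with the complementary-slackness identity. The marginal conditions $\pi_{1\#}\gamma_{OT}=\mu$ and $\pi_{2\#}\gamma_{OT}=\nu$ in \eqref{marginal optimal transport on P} are automatic, since they are precisely the feasibility constraint defining the admissible set over which \eqref{OT} is minimized; so the substantive content lies entirely in producing a minimizer and the associated Kantorovich potentials.

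For existence, I would apply the direct method of the calculus of variations. First note that the growth hypothesis $c(x,y)\leq a(x)+b(y)$ with $a\in L^1(\mu)$, $b\in L^1(\nu)$ guarantees $\int c\,d\gamma\leq\int a\,d\mu+\int b\,d\nu<+\infty$ for every $\gamma\in\Pi(\mu,\nu)$, so the infimum in \eqref{OT} is finite (and bounded below by $0$, as $c\geq 0$). Because every competitor has the fixed marginals $\mu,\nu$, the family $\Pi(\mu,\nu)$ is tight by Prokhorov's theorem, hence weakly sequentially precompact, and it is also weakly closed. Since $c$ is lower semicontinuous and nonnegative, the functional $\gamma\mapsto\int c\,d\gamma$ is weakly lower semicontinuous. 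Taking a minimizing sequence and extracting a weakly convergent subsequence then yields a minimizer $\gamma_{OT}$.

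For the potentials, I would invoke Kantorovich duality: the dual problem maximizes $\int\varphi\,d\mu+\int\psi\,d\nu$ over pairs with $\varphi\oplus\psi\leq c$, and strong duality asserts this supremum equals the primal minimum. I would produce an optimal dual pair through the $c$-transform machinery, replacing any admissible pair by the $c$-concave pair obtained from successive $c$-transforms, which does not decrease the dual value; the integrability bound $c\leq a+b$ controls these transforms so that one can extract optimizers lying in $C(\mathbb{R}^d)$ with $\varphi\oplus\psi\leq c$. Finally, since $\gamma_{OT}$ has marginals $\mu,\nu$, combining primal--dual equality with $\int(\varphi\oplus\psi)\,d\gamma_{OT}=\int\varphi\,d\mu+\int\psi\,d\nu$ gives $\int\big(c-\varphi\oplus\psi\big)\,d\gamma_{OT}=0$; as the integrand is nonnegative, it vanishes $\gamma_{OT}$-almost surely, which is exactly $\varphi(x)+\psi(y)=c(x,y)$ $\gamma_{OT}$-a.s.

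The main obstacle is the dual attainment with \emph{continuous} potentials: strong duality by itself yields only a supremum, and producing maximizers that are genuinely continuous (rather than merely measurable or semicontinuous) requires the full $c$-transform argument together with the growth bound $c\leq a+b$, which is the technical heart of the proof. Since this is a classical result, in practice I would either reproduce this argument in detail or simply cite the corresponding statement in \cite{ambrosio2008gradient}, as is done here.
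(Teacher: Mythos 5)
Your proposal is correct and is essentially the same route as the paper's: the paper offers no proof of this theorem at all, presenting it as a classical result imported from \cite{ambrosio2008gradient}, and your sketch (direct method for existence, Kantorovich duality with $c$-transforms for the potentials, complementary slackness for the $\gamma_{OT}$-a.s.\ equality) is precisely the standard argument behind that citation. Your closing remark --- that one would in practice simply cite \cite{ambrosio2008gradient}, since the genuinely delicate step is dual attainment with continuous potentials --- is exactly what the paper does.
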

Since we are using constrained Entropy Transport problem \eqref{constrained-ET} to approximate Optimal Transport problem \eqref{OT}, we are interested in comparing the difference between their optimal distributions $\gamma_{c ET}$ and $\gamma_{OT}$. Although we can identify their difference from marginal conditions \eqref{marginal entropy transport on P} and \eqref{marginal optimal transport on P} described in Theorem \ref{characterization of optimal gamma} and \ref{characterization of optimal distribution to Optimal Transport}, currently we do not have a quantitative analysis on the difference between the optimal distributions to problem \eqref{OT} and \eqref{constrained-ET}. This may serve as one of our future research directions.

\subsection{$\Gamma$-convergence property}\label{App a3}

Despite the discussion for a fixed $\Lambda$, we also establish asymptotic results for \eqref{constrained-ET} as $\Lambda \rightarrow +\infty$. We consider $\mathcal{P}_2(\mathbb{R}^d\times\mathbb{R}^d)$ equipped with the topology of weak convergence. We are able to establish the following $\Gamma$-convergence results for the functional $\mathcal{E}_{\Lambda,\rm{KL}}( \cdot|\mu,\nu)$ defined on $\mathcal{P}_2(\mathbb{R}^d\times\mathbb{R}^d)$:
\begin{theorem}[$\Gamma$-convergence]\label{Appen gamma}
Suppose the cost function is quadratic: $c(x,y)=|x-y|^2$. Assuming that we are given $\mu,\nu\in\mathcal{P}_2(\mathbb{R}^d)$ and at least one of $\mu$ and $\nu$ satisfies the Logarithmic  Sobolev inequality with constant $ K  >0$. Let  $\{\Lambda_n\}$ be a positive increasing sequence, satisfying $\lim_{n\rightarrow \infty}\Lambda_n = +\infty$. We consider the sequence of functionals $\{ \mathcal{E}_{\Lambda_n,\rm{KL}}(\cdot|\mu,\nu) \}$. Recall the functional $\mathcal{E}_{\iota}(\cdot|\mu,\nu)$ defined in \eqref{iota ET functional}. Then $\{ \mathcal{E}_{\Lambda_n,\rm{KL}}(\cdot|\mu,\nu)\}$ $\Gamma$- converges to $\mathcal{E}_{\iota}(\cdot|\mu,\nu)$ on $\mathcal{P}_2(\mathbb{R}^d\times\mathbb{R}^d)$.
\end{theorem}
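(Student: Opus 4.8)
The plan is to verify the two defining inequalities of $\Gamma$-convergence separately: the $\liminf$ (lower) inequality along every narrowly convergent sequence, and the existence of a recovery sequence realizing the $\limsup$ (upper) bound. Throughout I would first write the limit functional explicitly: since $\iota$ is the convex indicator forcing the marginal densities to equal $1$, the functional $\mathcal{E}_{\iota}(\gamma|\mu,\nu)$ equals $\iint |x-y|^2\,d\gamma$ when $\pi_{1\#}\gamma=\mu$ and $\pi_{2\#}\gamma=\nu$, and is $+\infty$ otherwise; that is, the $\Gamma$-limit is the quadratic transport cost subject to hard marginal constraints.

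For the $\liminf$ inequality I would fix $\gamma_n\to\gamma$ narrowly in $\mathcal{P}_2(\mathbb{R}^d\times\mathbb{R}^d)$ and, passing to a subsequence, assume $\mathcal{E}_{\Lambda_n,\mathrm{KL}}(\gamma_n|\mu,\nu)\to L<+\infty$ (otherwise the bound is vacuous). Because the relative-entropy terms are nonnegative, the cost integrals $\iint|x-y|^2\,d\gamma_n$ stay bounded, and each product $\Lambda_n D_{\mathrm{KL}}(\pi_{1\#}\gamma_n\|\mu)$, $\Lambda_n D_{\mathrm{KL}}(\pi_{2\#}\gamma_n\|\nu)$ is bounded as well; dividing by $\Lambda_n\to+\infty$ forces $D_{\mathrm{KL}}(\pi_{1\#}\gamma_n\|\mu)\to0$ and $D_{\mathrm{KL}}(\pi_{2\#}\gamma_n\|\nu)\to0$. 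As the projections are continuous, $\pi_{1\#}\gamma_n\to\pi_{1\#}\gamma$ and $\pi_{2\#}\gamma_n\to\pi_{2\#}\gamma$ narrowly, so the joint lower semicontinuity of relative entropy (via the Donsker--Varadhan dual representation as a supremum of narrowly continuous functionals) gives $D_{\mathrm{KL}}(\pi_{1\#}\gamma\|\mu)=0$ and $D_{\mathrm{KL}}(\pi_{2\#}\gamma\|\nu)=0$, i.e. $\pi_{1\#}\gamma=\mu$ and $\pi_{2\#}\gamma=\nu$. Hence $\mathcal{E}_{\iota}(\gamma|\mu,\nu)=\iint|x-y|^2\,d\gamma$, and the narrow lower semicontinuity of $\gamma\mapsto\iint|x-y|^2\,d\gamma$ (a nonnegative lower semicontinuous integrand) yields $\iint|x-y|^2\,d\gamma\le\liminf_n\iint|x-y|^2\,d\gamma_n\le L$, the desired bound.

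For the $\limsup$ inequality I would exhibit a recovery sequence. If $\mathcal{E}_{\iota}(\gamma|\mu,\nu)=+\infty$ the constant sequence $\gamma_n=\gamma$ works trivially. If $\mathcal{E}_{\iota}(\gamma|\mu,\nu)<+\infty$, then $\gamma$ already lies in $\mathcal{P}_2(\mathbb{R}^d\times\mathbb{R}^d)$ with marginals exactly $\mu$ and $\nu$; choosing again $\gamma_n=\gamma$ makes both relative-entropy terms vanish identically, so $\mathcal{E}_{\Lambda_n,\mathrm{KL}}(\gamma_n|\mu,\nu)=\iint|x-y|^2\,d\gamma=\mathcal{E}_{\iota}(\gamma|\mu,\nu)$ for every $n$, and the $\limsup$ is attained with equality. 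This is precisely where restricting to the absolutely continuous class $\mathcal{P}_2$ is convenient: the target $\gamma$ is itself an admissible competitor, so no smoothing of the recovery sequence is needed.

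The main obstacle is not either inequality above but the quantitative control that the Logarithmic Sobolev hypothesis supplies, and that upgrades this $\Gamma$-convergence into convergence of minimizers. Here I would use that a Log-Sobolev inequality with constant $K$ implies Talagrand's transport--entropy ($T_2$) inequality $W_2(\rho,\mu)^2\le \tfrac{2}{K}D_{\mathrm{KL}}(\rho\|\mu)$, so that $D_{\mathrm{KL}}(\pi_{1\#}\gamma_n\|\mu)\to0$ is upgraded to $W_2$-convergence of that marginal; this is the ingredient feeding the equi-coercivity needed for Theorem \ref{main thm B}. A secondary delicate point worth flagging is that the minimizer of the $\Gamma$-limit, the Brenier plan $\gamma_{OT}$, is typically supported on a graph and hence singular, so it lies outside $\mathcal{P}_2(\mathbb{R}^d\times\mathbb{R}^d)$; the $\Gamma$-convergence statement itself is unaffected since it quantifies only over $\gamma$ in the domain, but care is required when combining it with the fundamental theorem of $\Gamma$-convergence, which is exactly why the Log-Sobolev/Talagrand coercivity estimates are indispensable in the companion result.
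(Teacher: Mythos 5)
Your proof is correct, but your lower-bound argument takes a genuinely different route from the paper's. The paper splits the liminf inequality by cases on the limit point $\gamma$: when $\gamma$ violates a marginal constraint, it shows the marginals of $\gamma_n$ stay at $W_2$-distance at least $\delta/2$ from $\mu$ (or $\nu$), and then invokes the Logarithmic Sobolev hypothesis through Talagrand's inequality, $D_{\mathrm{KL}}(\pi_{1\#}\gamma_n\|\mu)\geq \tfrac{K}{2}W_2^2(\pi_{1\#}\gamma_n,\mu)\geq K\delta^2/8$, so that $\mathcal{E}_{\Lambda_n,\mathrm{KL}}(\gamma_n|\mu,\nu)\geq \Lambda_n K\delta^2/8\to+\infty$. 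You argue in the contrapositive instead: if $\liminf_n\mathcal{E}_{\Lambda_n,\mathrm{KL}}(\gamma_n|\mu,\nu)<+\infty$, then the KL terms are $O(1/\Lambda_n)$, and lower semicontinuity of relative entropy (Donsker--Varadhan) together with continuity of the marginal projections forces the limit $\gamma$ to have exact marginals, after which lower semicontinuity of the quadratic cost closes the argument. Your route is more elementary and strictly more general: it never touches the Log-Sobolev assumption, which shows that this hypothesis is actually superfluous for the $\Gamma$-convergence statement itself and is genuinely needed only for the equi-coercivity feeding Theorem \ref{main thm B}; it also works under plain narrow convergence, hence a fortiori under the $W_2$ topology the paper's proof actually manipulates (via $W_2(\gamma_n,\gamma)\to 0$). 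What the paper's route buys is a quantitative blow-up rate and economy of tools, since the same Talagrand inequality powers both its lower bound and the subsequent equi-coercivity lemma. The recovery-sequence halves of the two proofs are identical (constant sequence, two cases). Finally, your closing remark that the Brenier plan $\gamma_{OT}$ is supported on a graph, hence singular and outside the paper's absolutely continuous class $\mathcal{P}_2(\mathbb{R}^d\times\mathbb{R}^d)$, is a sharp and valid observation about the companion convergence-of-minimizers result, though, as you correctly note, it does not affect the present statement.
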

Before we present the proof, we introduce the Logarithmic Sobolev inequality \cite{villani2008optimal}:
\begin{definition}
We say a probability distribution $\mu$ satisfying the Logarithmic Sobolev inequality with constant $K>0$, if for any probability measure $\tilde{\mu}\ll\mu$, we have 
\begin{equation*}
    D_{\rm{KL}}  (\tilde{\mu} \| \mu) \leq \frac{1}{2K} I(\tilde{\mu} | \mu).
\end{equation*}
Here $I(\bar{\mu} | \mu)$ is the Fisher information defined as
\begin{equation*}
  I(\tilde{\mu}|\mu) = \int \left|\nabla\log\left( \frac{d\tilde{\mu}}{d\mu} \right)\right|^2~d\tilde{\mu}.
\end{equation*}
\end{definition}
We also need the following Talagrand inequality \cite{villani2008optimal}:
\begin{theorem}
  Suppose $\mu\in\mathcal{P}_2(\mathbb{R}^m)$ satisfies the Logarithmic Sobolev inequality with constant $ K > 0$. Then $\mu$ also satisfies the following Talagrand inequality: for any $ \tilde{\mu} \in \mathcal{P}_2(\mathbb{R}^m )  $,
  \begin{equation}
    W_2(\tilde{\mu}, \mu)\leq \sqrt{\frac{2D_{\rm{KL}}(\tilde{\mu}\|\mu)}{K}}. \label{Talagrand ineq}
  \end{equation}
\end{theorem}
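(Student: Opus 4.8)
The statement is the Otto--Villani theorem: a logarithmic Sobolev inequality implies the quadratic transportation-cost (Talagrand $T_2$) inequality. I would prove it through the Hamilton--Jacobi / Hopf--Lax route of Bobkov--Gentil--Ledoux, which uses only the log-Sobolev hypothesis and no curvature bound. The plan rests on three ingredients: Kantorovich duality for the quadratic cost, the Donsker--Varadhan (Gibbs) variational formula for relative entropy, and a monotonicity estimate for the Hopf--Lax semigroup that is driven by the log-Sobolev inequality.

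First I would rewrite $\tfrac12 W_2^2$ by duality. For the cost $\tfrac12|x-y|^2$ the Kantorovich dual reads
\begin{equation*}
  \tfrac12 W_2^2(\tilde\mu,\mu) = \sup_{g}\left\{ \int Q_1 g\, d\tilde\mu - \int g\, d\mu \right\},
\end{equation*}
where the supremum runs over bounded Lipschitz $g$ and $Q_t g(x) = \inf_{y}\{ g(y) + \tfrac{1}{2t}|x-y|^2\}$ is the Hopf--Lax infimal convolution; by construction $Q_t g$ solves the Hamilton--Jacobi equation $\partial_t Q_t g + \tfrac12|\nabla Q_t g|^2 = 0$ with $Q_0 g = g$. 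Next, for fixed $g$ and any $\tilde\mu\ll\mu$, the Donsker--Varadhan inequality applied with $h = K\, Q_1 g$ gives
\begin{equation*}
  K\int Q_1 g\, d\tilde\mu \le D_{\rm{KL}}(\tilde\mu\|\mu) + \log\int e^{K Q_1 g}\, d\mu .
\end{equation*}

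The heart of the argument, and the step I expect to be the main obstacle, is to control the exponential moment $\log\int e^{K Q_1 g}\, d\mu$ by $K\int g\, d\mu$ using only the log-Sobolev inequality. I would introduce the interpolation $G(t) = \tfrac{1}{Kt}\log\int e^{Kt\, Q_t g}\, d\mu$ on $t\in(0,1]$, with boundary value $G(0^+) = \int g\, d\mu$. Differentiating $G$ and substituting the Hamilton--Jacobi equation produces two competing terms: the derivative of the exponent contributes an entropy-like quantity $\mathrm{Ent}_{\mu}$ of $e^{Kt\, Q_t g}$, while the motion of $Q_t g$ contributes $\int |\nabla Q_t g|^2 e^{Kt\, Q_t g}\, d\mu$. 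The log-Sobolev inequality with constant $K$ (in the normalization of the excerpt, equivalently $\mathrm{Ent}_\mu(e^{g})\le\tfrac{1}{2K}\int|\nabla g|^2 e^{g}\,d\mu$) is exactly the relation that forces the sum to have the right sign, so that $G'(t)\le 0$ and hence $G(1)\le G(0^+)$, i.e. $\log\int e^{K Q_1 g}\, d\mu \le K\int g\, d\mu$. Verifying that the interpolation is differentiable and that the boundary value $G(0^+)$ is correct (reducing first to bounded Lipschitz $g$, for which $Q_t g$ is Lipschitz and the Hopf--Lax formula is classical) is the technical part to be handled with care.

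Combining the three steps, for every admissible $g$ one obtains $\int Q_1 g\, d\tilde\mu - \int g\, d\mu \le \tfrac1K D_{\rm{KL}}(\tilde\mu\|\mu)$; taking the supremum over $g$ and invoking the duality formula yields $\tfrac12 W_2^2(\tilde\mu,\mu)\le \tfrac1K D_{\rm{KL}}(\tilde\mu\|\mu)$, which is precisely $W_2(\tilde\mu,\mu)\le\sqrt{2 D_{\rm{KL}}(\tilde\mu\|\mu)/K}$. An alternative would be the original derivation through the HWI inequality, but that route is cleanest under an additional Bakry--\'Emery curvature bound, whereas the Hamilton--Jacobi approach needs only the hypothesis at hand.
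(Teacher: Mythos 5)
Your outline is mathematically correct, but there is no proof in the paper to match it against: the statement is the Otto--Villani theorem, and the paper invokes it as a known result with a citation to Villani's monograph, using it purely as a black box in the proofs of the $\Gamma$-convergence lower bound (Theorem \ref{Appen gamma}) and of the equi-coercivity Lemma \ref{lemma equi-coercv}. What you wrote is the Bobkov--Gentil--Ledoux proof (hypercontractivity of the Hopf--Lax semigroup), which is one of the standard derivations of Otto--Villani, and your three-step skeleton is sound: Kantorovich duality in the form $\tfrac12 W_2^2(\tilde\mu,\mu)=\sup_g\{\int Q_1g\,d\tilde\mu-\int g\,d\mu\}$, the Donsker--Varadhan inequality with $h=K\,Q_1g$, and the monotonicity of $G(t)=\tfrac{1}{Kt}\log\int e^{Kt\,Q_tg}\,d\mu$. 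Two checkpoints you handled correctly and that are the places where such a proof usually goes wrong: (i) the paper's normalization $D_{\rm{KL}}(\tilde\mu\|\mu)\le\tfrac{1}{2K}I(\tilde\mu|\mu)$ is indeed equivalent (by homogeneity of entropy, applied to $f=e^{\phi}$ with $\phi=Kt\,Q_tg$) to $\mathrm{Ent}_\mu(e^{\phi})\le\tfrac{1}{2K}\int|\nabla\phi|^2e^{\phi}\,d\mu$, which is exactly the inequality that cancels the Hamilton--Jacobi term $-\tfrac{Kt^2}{2}\int|\nabla Q_tg|^2e^{Kt\,Q_tg}\,d\mu$ and gives $G'(t)\le 0$; and (ii) tracking that constant through the duality step yields $\tfrac12 W_2^2\le\tfrac1K D_{\rm{KL}}$, i.e. precisely the constant $\sqrt{2/K}$ in the statement, with no factor lost. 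Relative to the paper's citation, your argument buys self-containedness and generality: it uses only the log-Sobolev hypothesis actually assumed of $\mu$ (no Bakry--\'Emery curvature bound), which is the same level of generality at which the paper applies the inequality to its marginals; the remaining work would be the technical verifications you flagged (a.e.\ differentiability of $t\mapsto Q_tg$ for bounded Lipschitz $g$, justifying differentiation under the integral, and the boundary value $G(0^+)=\int g\,d\mu$), all of which are standard.
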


Now we can prove Theorem \ref{Appen gamma}.
\begin{proof}[Proof of Theorem \ref{Appen gamma}]
 First, we notice that $\mathcal{P}_2(\mathbb{R}^d\times\mathbb{R}^d)$ equipped with the topology of weak convergence is metrizable by the 2-Wasserstein distance \cite{villani2008optimal}. Thus $\mathcal{P}_2(\mathbb{R}^d\times\mathbb{R}^d)$ is metric space and is first countable. For first countable space, we only need to verify the upper bound inequality and the lower bound inequality in order to prove $\Gamma$-convergence.\\
 1) Upper bound inequality:
 For every $\gamma \in \mathcal{P}(\mathbb{R}^d\times\mathbb{R}^d)$, there is a sequence $\{ \gamma_n\}$ converging to $\gamma$ such that
 \begin{equation}
   \limsup _{{n\to \infty }}\mathcal{E}_{\Lambda_n,\rm{KL} }(\gamma_{n}|\mu,\nu) \leq
   \mathcal{E}_{\iota}(\gamma|\mu,\nu). \label{up ine}
 \end{equation}
 We set $\gamma_n = \gamma$ for all $n\geq 1$, now there are two cases:\\
 (a)  If $\gamma$ doesn't satisfy at least one of the marginal constraints, i.e. $\pi_{1\#}\gamma\neq\mu$ or $\pi_{2 \#} \gamma \neq \nu$, then $\mathcal{E}_{\iota}(\gamma|\mu,\nu)=+\infty$ and the inequality \eqref{up ine} definitely holds;\\
 (b)  If $\gamma$ satisfies the marginal constraints, $\pi_{1\#}\gamma = \mu$, $\pi_{2 \#} \gamma =\nu$, then $\mathcal{E}_{\Lambda_n,\rm{KL}}(\gamma|\mu,\nu)=\mathcal{E}_{\iota}(\gamma|\mu,\nu)$, \eqref{up ine} also holds.\\
 2) Lower bound inequality:
 For every sequence $\{\gamma_n\}$ converging to $\gamma$,
 \begin{equation}
  \liminf _{{n\to \infty }}\mathcal{E}_{\Lambda_n, \rm{KL}   }(\gamma_{n}|\mu,\nu)\geq\mathcal{E}_{  \iota  } (\gamma | \mu,\nu ).\label{lower ine}
 \end{equation}
 We still separate our discussion into two cases:\\
 (a) If $\gamma$ satisfies the marginal constraints, we have:
 \begin{align*}
     \liminf_{n\rightarrow \infty}\mathcal{E}_{\Lambda_n, \rm{KL}}(\gamma_n|~\mu,\nu) &= \liminf_{n\rightarrow \infty}\int_{\mathbb{R}^d\times\mathbb{R}^d}c(x,y)d\gamma_n(x,y) +\Lambda_n D_{\rm{KL}}({\pi_1}_{\#}\gamma_n\|\mu)+\Lambda_n D_{\rm{KL}}({\pi_2}_{\#}\gamma_n\|\nu) \\
     &\geq \liminf_{n\rightarrow \infty}\int_{\mathcal{M}\times\mathcal{M}}c(x,y)d\gamma_n(x,y)\\
     &= \int_{\mathbb{R}^d\times\mathbb{R}^d}c(x,y)d\gamma(x,y) \\
     &= \mathcal{E}_{\iota}(\gamma|~\mu,\nu).
 \end{align*}
 Here we use the fact that $D_{\textrm{KL}}(\mu_1\|\mu_2)\geq 0$ for any $\mu_1,\mu_2\in\mathcal{P}(\mathbb{R}^d)$.\\
 (b) If $\gamma$ doesn't satisfy at least one of the marginal constraints, without loss of generality, assume that $W_2({\pi_1}_{\#} \gamma, \mu) = \delta > 0$. We have:
\begin{align*}
    W_2({\pi_1}_{\#} \gamma, \mu) 
    &\leq W_2({\pi_1}_{\#} \gamma, {\pi_1}_{\#} \gamma_n) + W_2( {\pi_1}_{\#} \gamma_n, \mu) \\
    &\leq W_2(\gamma,\gamma_n) + W_2( {\pi_1}_{\#} \gamma_n, \mu). 
\end{align*}
We can choose large enough $N$ such that when $n>N$, $W_2(\gamma, \gamma_n)\le \delta/2$, then we have $W_2( {\pi_1}_{\#} \gamma_n, \mu) \ge \delta/2$.

According to Talagrand inequality \eqref{Talagrand ineq}, we have:
\begin{align*}
   \sqrt{\frac{2 D_{\rm{KL}}({\pi_1}_{\#} \gamma_n \|\mu)}{K}}  &\ge W_2( {\pi_1}_{\#} \gamma_n, \mu) \\
    &\ge  \delta/2,
\end{align*}
i.e., when $n>N$, $D_{\rm{KL}}({\pi_1}_{\#} \gamma_n\|\mu)\geq K\frac{\delta^2}{8}$. This  implies:
\begin{equation*}
\mathcal{E}_{\Lambda_n,\rm{KL}}(\gamma_n|\mu,\nu)\geq \Lambda_n  K \frac{\delta^2}{8}.
\end{equation*}
Therefore we show that:
\begin{align*}
    \liminf_{n\rightarrow \infty}\mathcal{E}_{\Lambda_n,\rm{KL}}(\gamma_n|\mu,\nu)=  +\infty = \mathcal{E}_{\iota} (\gamma|\mu,\nu).
\end{align*}
Thus, combining (a) and (b), we have proved \eqref{lower ine}. And combining \eqref{up ine} \eqref{lower ine}, we have shown that $\{\mathcal{E}_{\Lambda_n,\rm{KL}}(\cdot| \mu,\nu)\}$ $\Gamma$-converges to $\mathcal{E}_{\iota}(\cdot|\mu,\nu).$
\end{proof}

We can then establish the equi-coercive property for the family of functionals $\{\mathcal{E}_{\Lambda_n,\rm{KL}}(\cdot|\mu,\nu)\}_n$. We can apply the Fundamental Theorem of $\Gamma$-convergence\cite{dal2012introduction}
 \cite{braides2006handbook} to establish the following asymptotic result:
\begin{theorem}[Property of $\Gamma$-convergence]\label{appen: property of convergence }
Suppose the cost function is quadratic: $c(x,y)=|x-y|^2$. Assuming $\mu,\nu\in\mathcal{P}_2(\mathbb{R}^d)$ and both $\mu, \nu$ satisfies the Logarithmic Sobolev inequality with constants $ K_\mu,K_\nu>0$. According to Corollary \ref{thm existence uniqueness sol }, the  
problem \eqref{constrained-ET} with functional $\mathcal{E}_{\Lambda_n,\rm{KL}}(\cdot | \mu,\nu   )$
admits a unique optimal solution, let us denote it as $\gamma_n$. According to Theorem \ref{uniqueness optimal distribution OT}, the Optimal Transport problem \eqref{OT} also admits a unique solution, we denote it as $\gamma_{OT}$. Then: $\lim_{n\rightarrow \infty}\gamma_n = \gamma_{OT}$ in $\mathcal{P}_2(\mathbb{R}^d\times \mathbb{R}^d)$. 
\end{theorem}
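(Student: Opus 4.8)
The plan is to invoke the Fundamental Theorem of $\Gamma$-convergence, which converts the $\Gamma$-convergence already established in Theorem \ref{Appen gamma}, together with equi-coercivity of the family $\{\mathcal{E}_{\Lambda_n,\rm{KL}}(\cdot|\mu,\nu)\}_n$, into convergence of minimizers. Concretely, once equi-coercivity is in hand, the theorem guarantees that every limit point of the sequence of minimizers $\{\gamma_n\}$ minimizes the $\Gamma$-limit $\mathcal{E}_{\iota}(\cdot|\mu,\nu)$. Since $\mathcal{E}_{\iota}$ is precisely the functional of the Optimal Transport problem \eqref{OT} (finite and equal to $\iint|x-y|^2\,d\gamma$ exactly when both marginals match, $+\infty$ otherwise), and \eqref{OT} has the unique minimizer $\gamma_{OT}$ by Theorem \ref{uniqueness optimal distribution OT}, this limit point must equal $\gamma_{OT}$. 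Uniqueness of all limit points, combined with precompactness, then upgrades subsequential convergence to convergence of the whole sequence.

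Thus the real work is to prove equi-coercivity: for each threshold $t$, produce a single compact set containing the sublevel set $\{\gamma : \mathcal{E}_{\Lambda_n,\rm{KL}}(\gamma|\mu,\nu)\le t\}$ for every $n$ simultaneously. First I would use non-negativity of the three summands in \eqref{KL ET functional}: on any such sublevel set one gets at once $\iint|x-y|^2\,d\gamma\le t$ and, because $\{\Lambda_n\}$ is increasing, $D_{\rm{KL}}(\pi_{1\#}\gamma\|\mu)\le t/\Lambda_n\le t/\Lambda_1$ and likewise $D_{\rm{KL}}(\pi_{2\#}\gamma\|\nu)\le t/\Lambda_1$, uniformly in $n$. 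Feeding the two relative-entropy bounds into the Talagrand inequality \eqref{Talagrand ineq} (available since $\mu,\nu$ satisfy the Logarithmic Sobolev inequality) yields $W_2(\pi_{1\#}\gamma,\mu)\le\sqrt{2t/(K_\mu\Lambda_1)}$ and $W_2(\pi_{2\#}\gamma,\nu)\le\sqrt{2t/(K_\nu\Lambda_1)}$. Hence the marginals stay in fixed $W_2$-balls around $\mu$ and $\nu$, so they have uniformly bounded second moments; since $\int|x|^2\,d\gamma$ and $\int|y|^2\,d\gamma$ are exactly the second moments of the two marginals, $\gamma$ itself has uniformly bounded second moment. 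A uniform second-moment bound yields tightness via Chebyshev, which by Prokhorov's theorem delivers the desired common compact set for the narrow topology.

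The hard part will be making the compactness match the topology in which the conclusion is asserted. A uniform second-moment bound gives tightness, hence relative compactness for weak convergence, but not automatically relative compactness for the $2$-Wasserstein metric, which additionally demands uniform integrability of the second moments; a bounded relative entropy to $\mu$ controls only the $W_2$-distance to $\mu$ and does not by itself force this uniform integrability. I would address this by strengthening the moment estimate: a Logarithmic Sobolev measure has Gaussian tails, so $\int e^{\lambda|x|^2}\,d\mu<+\infty$ for small $\lambda>0$, and the Donsker--Varadhan / Gibbs variational bound then converts the uniform relative-entropy control into a uniform exponential-moment, hence uniform-integrability, estimate for the marginals of $\gamma$, promoting tightness to genuine $W_2$-precompactness. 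With equi-coercivity secured in the correct topology, the identification of the limit with $\gamma_{OT}$ is immediate from Theorem \ref{Appen gamma} and Theorem \ref{uniqueness optimal distribution OT}.
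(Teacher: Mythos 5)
Your first two paragraphs are exactly the paper's own argument. The paper likewise invokes the fundamental theorem of $\Gamma$-convergence (Theorem \ref{main thm gamma cong}), proves equi-coercivity (Lemma \ref{lemma equi-coercv}) by combining non-negativity of the three terms of $\mathcal{E}_{\Lambda_n,\mathrm{KL}}$ with the Talagrand inequality \eqref{Talagrand ineq} at the level $\Lambda_1$, so that every sublevel set lies in $C=\{\gamma:\ W_2(\pi_{1\#}\gamma,\mu)\le d_0,\ W_2(\pi_{2\#}\gamma,\nu)\le d_0\}$, and then shows $C$ is compact for the topology of weak convergence by a tightness/Prokhorov argument (Lemma \ref{equi coercive Lemma}); your Chebyshev route to tightness is an inessential variant of the paper's explicit coupling estimate. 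The identification of the limit through uniqueness of $\gamma_{OT}$ (Theorem \ref{uniqueness optimal distribution OT}), with precompactness of $\{\gamma_n\}$ coming from the uniform bound $\mathcal{E}_{\Lambda_n,\mathrm{KL}}(\gamma_n|\mu,\nu)\le\mathcal{E}_{\Lambda_n,\mathrm{KL}}(\mu\otimes\nu|\mu,\nu)=\iint c\,d\mu\otimes\nu$, is also identical. Since the paper explicitly equips $\mathcal{P}_2(\mathbb{R}^d\times\mathbb{R}^d)$ with the topology of weak convergence, your first two paragraphs already constitute a complete proof of the theorem as the paper intends it.

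Your third paragraph, however, contains a genuine error. You are right that the paper is cavalier about weak versus $W_2$ topology (it even asserts the two coincide, which is false), but your proposed repair does not work: a uniform bound $D_{\mathrm{KL}}(\tilde\mu\|\mu)\le C$ against a log-Sobolev (hence Gaussian-tailed) reference yields, via Donsker--Varadhan with $f=\lambda|x|^2$, only a uniform \emph{bound} on second moments, not uniform integrability. Concretely, take $\mu=\mathcal{N}(0,1)$ on $\mathbb{R}$ and let $\tilde\mu_n$ have $\mu$-density constant on $[n,n+1]$, carrying mass $n^{-2}$ there, and constant elsewhere; then $D_{\mathrm{KL}}(\tilde\mu_n\|\mu)\to 1/2$ stays bounded while $\int_{|x|\ge n}|x|^2\,d\tilde\mu_n\ge 1$ for every $n$, so the second moments are not uniformly integrable and no subsequence of $\{\tilde\mu_n\}$ converges in $W_2$. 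Feeding $\tilde\mu_n\otimes\nu$ into the functional shows that sublevel sets at any fixed positive entropy level are never $W_2$-precompact, so equi-coercivity in $(\mathcal{P}_2,W_2)$ is genuinely false and cannot be ``secured'' by any tail estimate. If one does want the conclusion in the $W_2$ metric, the correct route is different and uses the divergence of $\Lambda_n$ rather than a fixed entropy budget: along the minimizer sequence $D_{\mathrm{KL}}(\pi_{1\#}\gamma_n\|\mu)\le \alpha/\Lambda_n\to 0$ and similarly for the second marginal, so Talagrand gives $W_2$-convergence of both marginals to $\mu$ and $\nu$; combined with the weak convergence $\gamma_n\rightharpoonup\gamma_{OT}$ already obtained, the second moments of $\gamma_n$ (which are the sums of the marginals' second moments) converge to those of $\gamma_{OT}$, and weak convergence plus convergence of second moments upgrades to $W_2$-convergence.
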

Before we prove this theorem, we introduce the definition of equi-coerciveness:
\begin{definition}
A family of functions $\{F_n\}$ on $X$ is said to be equi-
coercive, if for every $\alpha \in \mathbb{R}$, there is a compact set $C_\alpha$ of $X$ such that the sublevel sets $\{F_n \leq \alpha \} \subset C_\alpha$ for all $n$.
\end{definition}
To prove Theorem \ref{appen: property of convergence }, we first establish the following two lemmas:
\begin{lemma}\label{equi coercive Lemma}
 Suppose $d_0>0$. Denote
  \begin{equation*}
     C = \{\gamma\in\mathcal{P}_2(\mathbb{R}^d\times\mathbb{R}^d)~ |
      W_2(\pi_{1 \#}\gamma, \mu)\leq d_0,~W_2(\pi_{2 \#}\gamma, \nu)\leq d_0 \}.
  \end{equation*}
  Then $C$ is compact set of $\mathcal{P}_2(\mathbb{R}^d\times\mathbb{R}^d)$. Recall that $\mathcal{P}_2(\mathbb{R}^d\times\mathbb{R}^d)$ is equipped with the topology of weak convergence.
\end{lemma}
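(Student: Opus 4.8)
The plan is to establish compactness by combining tightness — which yields relative compactness via Prokhorov's theorem — with closedness under weak convergence. The essential subtlety is that although the constraints defining $C$ are phrased through the $2$-Wasserstein distance, the relevant compactness is with respect to the topology of weak convergence; I will therefore work throughout with weak (narrow) convergence and invoke $W_2$ only through its lower semicontinuity and its control of the second moments.

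First I would bound the second moments uniformly over $C$. For any $\gamma\in C$, the triangle inequality for $W_2$ together with the identity $W_2(\rho,\delta_0)^2=\int|x|^2\,d\rho$ gives
\[
\left(\int|x|^2\,d\pi_{1\#}\gamma\right)^{1/2}=W_2(\pi_{1\#}\gamma,\delta_0)\leq W_2(\pi_{1\#}\gamma,\mu)+W_2(\mu,\delta_0)\leq d_0+\left(\int|x|^2\,d\mu\right)^{1/2},
\]
and analogously for $\pi_{2\#}\gamma$ using $\nu$. Since $\mu,\nu\in\mathcal{P}_2(\mathbb{R}^d)$, summing the two bounds yields a finite constant $M$, independent of $\gamma$, with $\int_{\mathbb{R}^d\times\mathbb{R}^d}|(x,y)|^2\,d\gamma\leq M$ for every $\gamma\in C$.

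Next I would deduce tightness: by Markov's inequality $\gamma(\{|(x,y)|>R\})\leq M/R^2$ uniformly in $\gamma\in C$, so $C$ is tight, and Prokhorov's theorem gives that $C$ is relatively compact for weak convergence. It then remains to show that $C$ is weakly closed. Let $\gamma_n\to\gamma$ weakly with $\gamma_n\in C$. Since $\pi_1$ and $\pi_2$ are continuous, pushforward is weakly continuous, so $\pi_{1\#}\gamma_n\to\pi_{1\#}\gamma$ and $\pi_{2\#}\gamma_n\to\pi_{2\#}\gamma$ weakly. Using the lower semicontinuity of $W_2$ along weakly convergent sequences, $W_2(\pi_{1\#}\gamma,\mu)\leq\liminf_n W_2(\pi_{1\#}\gamma_n,\mu)\leq d_0$, and likewise $W_2(\pi_{2\#}\gamma,\nu)\leq d_0$; lower semicontinuity of the second moment keeps $\gamma\in\mathcal{P}_2$. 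Hence $\gamma\in C$, so $C$ is closed, and a closed subset of a relatively compact set is compact.

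The main obstacle is less a deep difficulty than a point that must be handled with care: the mismatch between the $W_2$ phrasing of the constraints and the weak topology in which compactness actually holds. A $W_2$-ball is genuinely not compact for $W_2$ itself, since mass can escape to infinity while keeping the distance to $\mu$ bounded, so the argument cannot rely on any ``closed ball is compact'' statement. The two facts doing the real work are therefore the uniform second-moment bound, which supplies tightness, and the lower semicontinuity of $W_2$ under weak convergence, which supplies closedness.
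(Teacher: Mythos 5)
Your proof is correct, and it takes a genuinely different route from the paper's. For tightness, the paper argues via an explicit optimal coupling between $\pi_{1\#}\gamma$ and $\mu$: it fixes a ball capturing most of the mass of $\mu$, then splits the mass of $\pi_{1\#}\gamma$ landing far outside that ball into the part transported distance at least $R$ (bounded by $d_0^2/R^2$, a Chebyshev-type bound on the transport cost) and the part originating in the tail of $\mu$; your argument instead derives a uniform second-moment bound from the triangle inequality $W_2(\pi_{1\#}\gamma,\delta_0)\le W_2(\pi_{1\#}\gamma,\mu)+W_2(\mu,\delta_0)$ and concludes by Markov's inequality. Your version is shorter and cleaner, at the mild price of using $\mu,\nu\in\mathcal{P}_2(\mathbb{R}^d)$ (the paper's coupling argument needs only that $\mu,\nu$ are probability measures). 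More importantly, you supply a step the paper omits: Prokhorov's theorem yields only \emph{relative} compactness, and the paper passes from tightness directly to the compactness conclusion without verifying that $C$ is weakly closed; your appeal to weak continuity of pushforwards, lower semicontinuity of $W_2$ along weakly convergent sequences, and lower semicontinuity of the second moment fills exactly that gap. One caveat applies to both proofs: the paper's definition of $\mathcal{P}_2$ requires $\gamma\ll\mathscr{L}^{2d}$, and absolute continuity is not preserved under weak limits, so strictly speaking neither argument shows that $C$ is closed under that definition; the compactness statement is really correct for the set defined by the $W_2$-ball constraints inside the space of probability measures with finite second moments, which is how your proof treats it.
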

\begin{proof}[Proof of the Lemma \ref{equi coercive Lemma}]
 According to Prokhorov's Theorem \cite{billingsley2013convergence}, we only need to show that $C$ is tight. That is: for any $\epsilon>0$, we can find a compact set $E_\epsilon\subset \mathbb{R}^d\times\mathbb{R}^d$, such that
 \begin{equation*}
    \gamma(E_\epsilon)\geq 1-\epsilon \quad \forall~\gamma\in C.
 \end{equation*}
 Let us denote $B_R^{d}\subset \mathbb{R}^d$ as the ball centered at origin with radius $R$ in $\mathbb{R}^d$. Since $\mu,\nu$ are probability measures, for arbitrary $\epsilon>0$, we can pick $R(\mu,\epsilon), R(\nu,\epsilon)>0$ such that
 \begin{equation*}
    \mu(B_{R(\mu,\epsilon)}^d)\geq 1-\epsilon,\quad \nu(B_{R(\nu,\epsilon)}^d)\geq 1-\epsilon.
 \end{equation*}
 Now for any chosen $\epsilon>0$, we choose
 \begin{equation*}
     R = \sqrt{\frac{4d_0^2}{\epsilon}} \quad \textrm{and} \quad \tilde{R}=\sqrt{(R(\mu,\frac{\epsilon}{4})+R)^2+(R(\nu,\frac{\epsilon}{4})+R)^2}.
 \end{equation*}
 Now we prove $\gamma(B_{\tilde{R}}^{2d})\geq 1-\epsilon$ for any $\gamma\in C$:\\
  Denote $\gamma_1=\pi_{1 \#} \gamma$, let $\gamma_{OT}$ be the optimal coupling of $\gamma_1$ and $\mu$, i.e. 
 \begin{equation*}
  \gamma_{OT} = \underset{\pi\in\Pi(\gamma_1,\mu)}{\textrm{argmin}}\left\{\iint c(x,y)~d\pi(x,y)\right\}.
 \end{equation*}
 Then (here, we denote $R_\mu = R(\mu,\frac{\epsilon}{4})$ for short hand):
 \begin{equation*}
   d_0^2 \geq W_2^2(\gamma_1, \mu) = \int_{\mathbb{R}^d}\int_{ \mathbb{R}^d}|x-y|^2d\gamma_{OT}(x,y)\geq \int_{\overline{B_{R_\mu+R}^d}}\int_{ B^d_{R_\mu}} |x-y|^2 ~d\gamma_{OT}(x,y)\geq R^2\int_{\overline{B_{R_\mu+R}^d}}\int_{B^d_{R_\mu}}~d\gamma_{OT}(x,y).
 \end{equation*}
 This gives:
 \begin{equation}
   \int_{\overline{B_{R_\mu+R}^d}}\int_{B_{R_\mu}^d}~d\gamma_{OT}(x,y)\leq\frac{d_0^2}{R^2}=\frac{\epsilon}{4}.  \label{lemmap 1}
 \end{equation}
 On the other hand, one have:
 \begin{equation}
   \int_{\overline{B_{R_\mu+R}^d}}\int_{\overline{B_{R_\mu}^d}} d\gamma_{OT}(x,y) \leq \int_{\overline{B_{R_\mu}^d}} d\mu(y) = 1-\mu(B_{R_\mu}^d) \leq \frac{\epsilon}{4}. \label{lemmap 2}
 \end{equation}
 Now sum \eqref{lemmap 1} and \eqref{lemmap 2} together, we have:
 \begin{equation*}
   \gamma_1\left(\overline{B_{R_\mu+R}^{d}}\right)=\int_{\overline{B_{R_\mu+R}^d}}\int_{\mathbb{R}^d}d\gamma_{OT} = \iint_{\overline{B_{R_\mu+R}^d}\times   B_{R_\mu}^d} ~d\gamma_{OT}(x,y) + \iint_{\overline{B_{R_\mu+R}^d}\times \overline{B_{R_\mu}^d}}~d\gamma_{OT}(x,y) \leq \frac{\epsilon}{2}.
 \end{equation*}
 Similarly, denote $\gamma_2=\pi_{2 \#}\gamma$, 
 we have:
 \begin{equation*}
    \gamma_2\left(\overline{B_{R_\nu+R}^{d}}\right)\leq \frac{\epsilon}{2}.
 \end{equation*}
 As a result, for any $\epsilon>0$, we can pick the compact ball $B_{\tilde{R}}^{2d}\subset \mathbb{R}^d\times\mathbb{R}^d $, so that for any $\gamma\in C$, 
 \begin{equation}
   \gamma(B_{\tilde{R}}^{2d})=1-\gamma\left(\overline{B_{\tilde{R}}^{2d}}\right)\geq 1-\gamma\left(\left(\overline{B_{R_\mu+R}^d}\times\mathbb{R}^d\right)\bigcup\left(\mathbb{R}^d\times\overline{B_{R_\nu+R}^{d}}\right)\right)\geq 1-\gamma_1 \left(\overline{B_{R_\mu+R}^d}\right) - \gamma_2\left(\overline{B_{R_\nu+R}^d}\right)\geq 1-\epsilon,  \label{tight ine}
 \end{equation}
 here we are using the fact:
 \begin{equation*}
   \overline{B_{\tilde{R}}^{2d}}\subset \left(\overline{B_{R_\mu+R}^d}\times\mathbb{R}^d\right)\bigcup\left(\mathbb{R}^d\times\overline{B_{R_\nu+R}^{d}}\right).
 \end{equation*}
 The inequality \eqref{tight ine} proves the tightness of set $C$ and thus $C$ is compact set in $\mathcal{P}_2(\mathbb{R}^d\times\mathbb{R}^d)$.
\end{proof}
\begin{lemma}\label{lemma equi-coercv}
  Assuming $\mu,\nu\in\mathcal{P}_2(\mathbb{R}^d)$ and both $\mu, \nu$ satisfies the Logarithmic Sobolev inequality with constants $ K_\mu,K_\nu>0$. The sequence of functionals $\{\mathcal{E}_{\Lambda_n,\rm{KL}}(\cdot|\mu,\nu)\}$ defined on $\mathcal{P}_2(\mathbb{R}^d\times\mathbb{R}^d)$ with positive increasing sequence $\{\Lambda_n\}$ is equi-coercive.
\end{lemma}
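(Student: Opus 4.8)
The plan is to exhibit, for each threshold $\alpha\in\mathbb{R}$, a \emph{single} compact set containing the sublevel set $\{\mathcal{E}_{\Lambda_n,\rm{KL}}(\cdot|\mu,\nu)\leq\alpha\}$ for every $n$ at once, and the natural candidate is precisely the set $C$ built in Lemma \ref{equi coercive Lemma}, with a radius $d_0=d_0(\alpha)$ chosen as a function of the threshold. First I would dispose of the trivial regime: since $c(x,y)=|x-y|^2\geq 0$ and $D_{\rm{KL}}(\cdot\|\cdot)\geq 0$, the functional is non-negative, so for $\alpha<0$ the sublevel set is empty and is contained in any compact set.

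For $\alpha\geq 0$ the crucial point is uniformity in $n$. Discarding the non-negative transport term yields
\begin{equation*}
\mathcal{E}_{\Lambda_n,\rm{KL}}(\gamma|\mu,\nu)\geq \Lambda_n D_{\rm{KL}}(\pi_{1\#}\gamma\|\mu),\qquad \mathcal{E}_{\Lambda_n,\rm{KL}}(\gamma|\mu,\nu)\geq \Lambda_n D_{\rm{KL}}(\pi_{2\#}\gamma\|\nu).
\end{equation*}
Because $\{\Lambda_n\}$ is positive and increasing we have $\Lambda_n\geq\Lambda_1>0$ for all $n$, so whenever $\mathcal{E}_{\Lambda_n,\rm{KL}}(\gamma|\mu,\nu)\leq\alpha$ I obtain the $n$-independent bounds $D_{\rm{KL}}(\pi_{1\#}\gamma\|\mu)\leq\alpha/\Lambda_1$ and $D_{\rm{KL}}(\pi_{2\#}\gamma\|\nu)\leq\alpha/\Lambda_1$. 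In particular both divergences are finite, so $\pi_{1\#}\gamma\ll\mu$ and $\pi_{2\#}\gamma\ll\nu$, and both marginals lie in $\mathcal{P}_2(\mathbb{R}^d)$ since their second moments are dominated by that of $\gamma$, which legitimizes the use of the Talagrand inequality next.

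Next I would invoke the Talagrand inequality \eqref{Talagrand ineq}, valid because $\mu$ and $\nu$ satisfy the Logarithmic Sobolev inequality with constants $K_\mu,K_\nu$, to convert the uniform KL bounds into uniform Wasserstein bounds:
\begin{equation*}
W_2(\pi_{1\#}\gamma,\mu)\leq\sqrt{\frac{2\alpha}{K_\mu\Lambda_1}},\qquad W_2(\pi_{2\#}\gamma,\nu)\leq\sqrt{\frac{2\alpha}{K_\nu\Lambda_1}}.
\end{equation*}
Taking $d_0=d_0(\alpha)=\sqrt{2\alpha/\Lambda_1}\,\max\{K_\mu^{-1/2},K_\nu^{-1/2}\}$ (and any fixed positive radius in the degenerate case $\alpha=0$), every $\gamma$ with $\mathcal{E}_{\Lambda_n,\rm{KL}}(\gamma|\mu,\nu)\leq\alpha$ lies in the corresponding set $C$, which Lemma \ref{equi coercive Lemma} guarantees to be compact in $\mathcal{P}_2(\mathbb{R}^d\times\mathbb{R}^d)$. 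Hence $\{\mathcal{E}_{\Lambda_n,\rm{KL}}(\cdot|\mu,\nu)\leq\alpha\}\subset C$ for all $n$, which is exactly equi-coercivity.

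I do not expect a genuine obstacle: once the two structural ingredients (non-negativity of the transport cost and the Talagrand inequality) are in place, the argument is bookkeeping. The single point deserving care, which is the whole reason the statement is true, is the uniformity over $n$: everything rests on replacing the varying $\Lambda_n$ by the fixed lower bound $\Lambda_1$, which is permissible precisely because $\{\Lambda_n\}$ is increasing. Were $\inf_n\Lambda_n=0$, the KL constraints would degenerate and the sublevel sets would fail to be uniformly tight, so the monotonicity hypothesis is essential rather than cosmetic.
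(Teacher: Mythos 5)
Your proposal is correct and follows essentially the same route as the paper's proof: drop the non-negative transport term, replace $\Lambda_n$ by the uniform lower bound $\Lambda_1$, convert the resulting KL bounds into Wasserstein bounds via the Talagrand inequality \eqref{Talagrand ineq}, and then invoke the compactness Lemma \ref{equi coercive Lemma} with exactly the same radius $d_0=\max\bigl\{\sqrt{2\alpha/(K_\mu\Lambda_1)},\sqrt{2\alpha/(K_\nu\Lambda_1)}\bigr\}$. The only cosmetic difference is that the paper keeps both KL terms in a single combined lower bound for $\mathcal{E}_{\Lambda_n,\rm{KL}}$ while you bound each marginal divergence separately, which changes nothing of substance.
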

\begin{proof}[proof of Lemma \eqref{lemma equi-coercv}]
By Talagrand inequality \eqref{Talagrand ineq} involving $\mu,\nu$:
\begin{equation*}
  D_{\textrm{KL}}(\rho\|\mu)\geq \frac{K_\mu}{2}W^2_2(\rho,\mu) \quad D_{\textrm{KL}}(\rho\|\nu)\geq \frac{K_\nu}{2} W^2_2(\rho,\nu) \quad \forall~\rho\in\mathcal{P}_2(\mathbb{R}^d).
\end{equation*}
Thus,
\begin{equation*}
  \mathcal{E}_{\Lambda_n,\rm{KL}}(\gamma|\mu,\nu)\geq \Lambda_1 \left(\frac{K_\mu}{ 2 } W_2^2(\gamma_1,\mu) + \frac{K_\nu}{2}W_2^2(\gamma_2,\nu)\right).
\end{equation*}
For any $\alpha\geq 0$, we set $d_0 = \max\{\sqrt{\frac{2\alpha}{K_\mu\Lambda_1}},\sqrt{\frac{2\alpha}{K_\nu\Lambda_1}}\}$, then 
\begin{align*}
   \left\{\gamma~|~\gamma\in\mathcal{P}_2(\mathbb{R}^d\times\mathbb{R}^d),~\mathcal{E}_{\Lambda_n,\rm{KL}}(\gamma|\mu,\nu)\leq \alpha\right\} \subset \left\{\gamma~\Big|~W_2(\gamma_1, \mu)\leq d_0, W_2(\gamma_2, \nu) \leq d_0 \right\} \overset{\textrm{denote as}}{=}C_\alpha.
\end{align*}
By Lemma \ref{equi coercive Lemma}, $C_\alpha$ is compact in $\mathcal{P}_2(\mathbb{R}^d\times\mathbb{R}^d)$ for any $\alpha$ (for $\alpha<0$, we simply get empty set and thus is also compact set). Thus the sequence of functionals $\{\mathcal{E}_{\Lambda_n,\rm{KL}}(\cdot|\mu,\nu)\}$ is equi-coercive.
\end{proof}
Now our proof mainly rely on the following fundamental theorem of $\Gamma$-convergence \cite{dal2012introduction} \cite{braides2006handbook}: 
\begin{theorem} \label{main thm gamma cong}
Let $(X, d)$ be a metric space, let $\left\{F_{\theta_n}\right\}$ with $\theta_n\rightarrow +\infty$ be an equi-coercive sequence of functionals on $X$, assume $\{F_{\theta_n}\}$ $\Gamma$-converge to the functional $F$ defined on $X$; Then
\begin{equation*}
\exists \min_X F = \lim_{n\rightarrow\infty} \inf_X F_{\theta_n}.
\end{equation*}
Moreover, if $\{x_n\}$ is a precompact sequence such that $x_n$ is the minimizer of $F_{\theta_n}$: $F_{\theta_n}(x_n)=\inf_X F_{\theta_n}$, then every
limit of a subsequence of $\left\{x_n\right\}$ is a minimum point for $F$.
\end{theorem}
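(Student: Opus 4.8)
The plan is to prove the two assertions separately, relying on the sequential characterization of $\Gamma$-convergence that is legitimate because $(X,d)$ is a metric space (hence first countable, as already invoked in the paper): for every $x\in X$ one has the \emph{liminf inequality} $\liminf_n F_{\theta_n}(y_n)\ge F(x)$ for every sequence $y_n\to x$, and the existence of a \emph{recovery sequence} $y_n\to x$ with $\limsup_n F_{\theta_n}(y_n)\le F(x)$. Throughout write $m_n=\inf_X F_{\theta_n}$. First I would record a small but essential lemma: the liminf inequality passes to converging subsequences, i.e.\ if $x_{n_k}\to x$ then $\liminf_k F_{\theta_{n_k}}(x_{n_k})\ge F(x)$. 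To see this I would extend $\{x_{n_k}\}$ to a full sequence $z_n\to x$ by setting $z_n=x_{n_k}$ whenever $n_k\le n< n_{k+1}$; applying the full liminf inequality to $\{z_n\}$ and noting that the $\liminf$ along the subsequence $\{n_k\}$ dominates the $\liminf$ over all $n$ yields the claim. Next comes the easy half of the value formula: for any fixed $x$, feeding a recovery sequence $y_n\to x$ into $m_n\le F_{\theta_n}(y_n)$ gives $\limsup_n m_n\le F(x)$, and taking the infimum over $x$ yields $\limsup_n m_n\le \inf_X F$.

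For the reverse inequality and the existence of a minimizer I would exploit equi-coercivity. I would pass to a subsequence along which $m_{n_k}\to \ell:=\liminf_n m_n$; the degenerate case $\ell=+\infty$ forces $F\equiv+\infty$ by the upper bound just proved, making the statement trivial, so I assume $\ell<+\infty$. Choosing almost-minimizers $x_k$ with $F_{\theta_{n_k}}(x_k)\le m_{n_k}+1/k$, for large $k$ these satisfy $F_{\theta_{n_k}}(x_k)\le \ell+1$ and hence lie in the compact sublevel set $C_{\ell+1}$ furnished by equi-coercivity. Extracting a convergent sub-subsequence $x_k\to x_0$ and applying the subsequence lemma gives $F(x_0)\le \liminf_k F_{\theta_{n_k}}(x_k)=\ell$. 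Chaining the estimates, $\inf_X F\le F(x_0)\le \ell=\liminf_n m_n\le \limsup_n m_n\le \inf_X F$, so every quantity coincides: $x_0$ attains the infimum of $F$, the limit $\lim_n m_n$ exists, and $\min_X F=\lim_n m_n$.

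Finally, for the second assertion, let $x_n$ minimize $F_{\theta_n}$ (so $F_{\theta_n}(x_n)=m_n$) and let $x_*$ be the limit of a precompact subsequence $x_{n_k}\to x_*$. The subsequence lemma gives $F(x_*)\le \liminf_k F_{\theta_{n_k}}(x_{n_k})=\liminf_k m_{n_k}$, which equals $\min_X F$ by the value formula established above; since $F(x_*)\ge \min_X F$ trivially, I conclude that $x_*$ is a minimizer of $F$. The main obstacle is the lower-bound step: the defining liminf inequality is phrased along the \emph{full} index $n$, and it must be transferred to the subsequence produced by coercivity while simultaneously ruling out escape of the almost-minimizers to infinity. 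Both difficulties are handled precisely by the subsequence lemma together with the compact-sublevel-set hypothesis, whereas the recovery-sequence (upper-bound) direction is entirely routine and uses no coercivity.
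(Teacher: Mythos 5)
Your proof is correct: the paper does not prove this theorem itself but cites it as the Fundamental Theorem of $\Gamma$-convergence from Dal Maso and Braides, and your argument is precisely the standard one found there --- recovery sequences for $\limsup_n \inf_X F_{\theta_n} \le \inf_X F$, equi-coercivity to trap almost-minimizers in a compact sublevel set, and the filling-in trick to transfer the liminf inequality to subsequences. The only point worth flagging is the degenerate case $\ell = +\infty$, which you correctly dispose of under the usual convention that $F \equiv +\infty$ attains its minimum everywhere; with that noted, all steps (including the chain $\inf_X F \le F(x_0) \le \ell \le \limsup_n m_n \le \inf_X F$ that simultaneously yields existence of the minimum and convergence of the infima) are sound.
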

We can now prove Theorem \ref{appen: property of convergence }.
\begin{proof} 
  We apply Theorem \ref{main thm gamma cong} to the sequence of functionals $\{\mathcal{E}_{\Lambda_n,\rm{KL}}(\cdot|\mu,\nu)\}_n$ defined on probability space equipped with 2-Wasserstein metric $(\mathcal{P}_2(\mathbb{R}^d\times\mathbb{R}^d), W_2)$, by Lemma \ref{lemma equi-coercv} , we know that $\{\mathcal{E}_{\Lambda_n,\rm{KL}}(\cdot|\mu,\nu)\}_n$ is equi-coercive. And by Theorem \ref{Appen gamma}, $\{\mathcal{E}_{Lambda_n,\rm{KL}}\}(\cdot|\mu,\nu)\}_n$ $\Gamma$-converge to $\mathcal{E}_\iota(\cdot|\mu,\nu)$. Recall $\gamma_n$ is the unique minimizer of $\mathcal{E}_{\Lambda_n,\rm{KL}}(\cdot|\mu,\nu)$, we are going to show that $\{\gamma_n\}$ is precompact sequence in $\mathcal{P}_2(\mathbb{R}^d\times\mathbb{R}^d)$: We define
  \begin{equation*}
     \alpha = \iint c(x,y)~d(\mu\otimes\nu)= \mathcal{E}_{\Lambda_n,\rm{KL}}(\mu\otimes\nu|\mu,\nu) \quad\forall~n\geq 1.
  \end{equation*}
  Then we have $\mathcal{E}_{\Lambda_n,\rm{KL}}(\gamma_n|\mu,\nu)\leq\mathcal{E}_{\Lambda_n,\rm{KL}}(\mu\otimes\nu|\mu,\nu)=\alpha$ for all $n$, thus
  \begin{equation*}
     \gamma_n\in\left\{\gamma~|~\gamma\in\mathcal{P}_2(\mathbb{R}^d\times\mathbb{R}^d),~\mathcal{E}_{\Lambda_n,\rm{KL}}(\gamma|\mu,\nu)\leq\alpha\right\} \quad \forall~ n\geq1. 
  \end{equation*}
  Now since $\{\mathcal{E}_{\Lambda_n,\rm{KL}}(\cdot|\mu,\nu)\}$ is equi-coercive, we can pick compact $C_\alpha$ such that:
  \begin{equation*}
    \left\{\gamma~|~\gamma\in\mathcal{P}_2(\mathbb{R}^d\times\mathbb{R}^d),~\mathcal{E}_{\Lambda_n,\rm{KL}}(\gamma|\mu,\nu)\leq\alpha\right\}\subset C_\alpha \quad\forall ~n \geq 1.
  \end{equation*}
  Thus all $\{ \gamma_n \}$ lie in the compact set $C_\alpha$ and $\{\gamma_n\}$ is precompact.\\
  Now Theorem \ref{appen: property of convergence } asserts that any limit point of $\{\gamma_n\}$ is a minimum point of $\mathcal{E}_{\iota}(\cdot|\mu,\nu)$, however, $\mathcal{E}_{\iota}(\cdot,\mu,\nu)$ admits unique minimizer $\gamma_{OT}$, we have proved  $\lim_{n\rightarrow\infty}\gamma_n=\gamma_{OT}$.
\end{proof}

\newpage

\section{Appendix B}\label{App b}

In this Appendix, we first introduce the basic knowledge of Waserstein manifold and derive the formula for Wasserstein gradient flow for general functionals defined on that manifold. We then give a detailed derivation of the equation for gradient flow \eqref{PDE vers} of functional $\mathcal{E}_{\Lambda,\rm{KL}}(\cdot|\mu,\nu)$.
\subsection{Wasserstein geometry and Wasserstein gradient flows}\label{App b1}

\subsubsection{Wasserstein manifold-like structure}\label{wass_mfld}
Denote the probability space supported on $\mathbb{R}^d$ with densities and finite second order momentum as:
\begin{equation*}
    \mathcal{P}_2(\mathbb{R}^d)=\left\{\gamma~ \Big|   
    \gamma\in\mathcal{P}(\mathbb{R}^d),\gamma\ll\mathscr{L}^d,~\int |x|^2~d\gamma <\infty\right\}.
\end{equation*}
We define the so-called Wasserstein distance (also known as $L^2$-Wasserstein distance) on $\mathcal{P}$ as \cite{villani2008optimal}:
\begin{equation}
  W_2(\gamma_1,\gamma_2)= \left(\inf_{\pi\in\Pi(\gamma_1,\gamma_2)}\iint |x-y|^2 ~d\pi(x,y)\right)^{1/2}. \label{def_wass_dist}
\end{equation}
Here $\Pi(\gamma_1,\gamma_2)$ is the set of joint distributions on $\mathbb{R}^d\times \mathbb{R}^d$ with fixed marginal distributions as $\gamma_1, \gamma_2$ (recall definition \eqref{eq:Pi}).
If we treat $\mathcal{P}( \mathbb{R}^d )$ as an infinite dimensional manifold,  then the Wasserstein distance $W_2$ can induce a metric $g^W$ on the tangent bundle $T\mathcal{P}(\mathbb{R}^d)$ and then $\mathcal{P}(\mathbb{R}^d)$ becomes a Riemmanian manifold. We now directly give the definition of $g^W$ and then prove the equivalence between $g^W$ and $W_2$: One can identify the tangent space at $\gamma$ as:
\begin{equation*}
T_\gamma\mathcal{P}=\left\{\dot\gamma ~\Big|~\dot{\gamma} ~\textrm{is a signed measure},~ \int
 d\dot\gamma=0
\right\}.
\end{equation*}
Now for a specific $\gamma\in\mathcal{P}( \mathbb{R}^d )$ and $\dot\gamma_i\in T_\gamma\mathcal{P}( \mathbb{R}^d )$, $i=1,2$, we define the Wasserstein metric tensor $g^W$ as: \cite{Lafferty,otto2001}
\begin{equation}
g^W(\gamma)(\dot{\gamma}_1,\dot{\gamma}_2)=\int  \nabla\psi_1(x)\cdot\nabla\psi_2(x)\gamma(x) ~dx,\label{def_metric}
\end{equation}
where $\psi_1,\psi_2$ satisfies:\footnote{$\psi_i$, $i=1,2$ satisfy the equation in the weak sense that:
\begin{equation*}
  \int f~d\dot\gamma = \int \nabla f\cdot\nabla\psi_i~d\gamma \quad\forall f \in C^{\infty}_0(\mathbb{R}^d) \quad i=1,2.
\end{equation*}
}
\begin{equation}
\dot{\gamma_i}=-\nabla\cdot(\gamma_i\nabla\psi_i) \quad i=1,2,\label{hodge}
\end{equation}
with boundary conditions
$$\lim_{x\rightarrow +\infty }   \frac{d\gamma_1}{d\mathscr{L}^d}(x)\nabla\psi_i(x)=0 \quad i=1,2.$$
according to the above definition, we can write:
\begin{equation*}
    g^W(\gamma)(\dot\gamma_1,\dot\gamma_2) = \int\psi_1(-\nabla\cdot(\gamma\nabla\psi_2))~dx = \int (-\nabla\cdot(\gamma\nabla))^{-1}(\dot\gamma_1) 
    \dot\gamma_2~dx.
\end{equation*}
Thus, we can identify $g^W(\gamma)$ as $(-\nabla\cdot(\gamma\nabla))^{-1}$. When $\textrm{supp}(\gamma)=\mathbb{R}^d$, $g^W(\gamma)$ is a positive definite bilinear form defined on tangent bundle $T\mathcal{P}(\mathbb{R}^d)=\{(\gamma,\dot\gamma)~|~ \gamma\in \mathcal{P}(\mathbb{R}^d),~\dot\gamma\in T_\gamma\mathcal{P}(\mathbb{R}^d ) \}$ and we can treat $\mathcal{P}$ as a Riemannian manifold and we will call the manifold $(\mathcal{P}(\mathbb{R}^d),g^W)$ Wasserstein manifold-like structure \cite{otto2001}.

\subsubsection{Wasserstein gradient}\label{background wass_grad}
We denote the Wasserstein gradient $\textrm{grad}_W$ as manifold gradient on $(\mathcal{P}(\mathbb{R}^d),g^W)$.
In Riemannian geometry, the manifold gradient should be compatible with the metric, which implies that for any smooth $\mathcal{F}$ defined on $\mathcal{P}$ and for any $\gamma\in\mathcal{P}(\mathbb{R}^d) $, consider arbitrary differentiable curve $\{\gamma_t\}_{t\in(-\delta,\delta)}$ with $\gamma_0=\gamma$, we always have:
\begin{equation*}
 \frac{d}{dt}\mathcal{F}(\gamma_t)\Big\vert_{t=0} = g^W(\gamma)(\textrm{grad}_W\mathcal{F}(\gamma) , ~ \dot\gamma_0).
\end{equation*}
Since we can write: 
\[ \frac{d}{dt}\mathcal{F}(\gamma_t)\Big\vert_{t=0}=\int \frac{\delta \mathcal{F}(\gamma)}{\delta\gamma}(x)   d\dot\gamma_0 = \left\langle \frac{\delta\mathcal{F}(\gamma)}{\delta\gamma} , \dot\gamma_0\right\rangle,\]
here $ \frac{\delta\mathcal{F}(\gamma)}{\delta\gamma}(x)$ is the functional derivative of $\mathcal{F}$ at point $x\in\mathbb{R}^d$, we then have:
\begin{equation*}
 \left\langle \frac{\delta\mathcal{F}(\gamma)}{\delta\gamma},\dot\gamma_0\right\rangle = g^W(\gamma)(\textrm{grad}_W\mathcal{F}(\gamma) , ~ \dot\gamma_0)\quad \forall~\dot\gamma_0\in T_\gamma\mathcal{P}(\mathbb{ R}^d).
\end{equation*}
This leads to the following useful formula for computing Wasserstein gradient of functional $\mathcal{F}$:
\begin{equation}
\begin{split}
\textrm{grad}_W\mathcal{F}(\gamma)=&{g^{W}(\gamma)}^{-1}\left(\frac{\delta\mathcal{F}(\gamma)}{\delta\gamma}\right)\\
  =&-\nabla\cdot\left(\gamma \nabla~ \frac{\delta\mathcal{F}(\gamma)}{\delta\gamma }\right),
   \end{split}
   \label{gradflow}
\end{equation}

Thus, we can formulate the Wasserstein gradient flow of functional $\mathcal{F}$ as:
\begin{equation}
  \frac{\partial \gamma }{\partial t} = -\textrm{grad}_W \mathcal{F}(\gamma)\quad   \Longleftrightarrow\quad\frac{\partial \gamma }{\partial t} = \nabla\cdot\left(\gamma\nabla\frac{\delta\mathcal{F}(\gamma)}{\delta\gamma}\right).  \label{wasserstein grad flow measure}
\end{equation}
We can also formulate Wasserstein gradient flow of $\mathcal{F}$ as an equation of density function $\rho$ of $\gamma$:
\begin{equation}
  \frac{\partial \gamma }{\partial t} = -\textrm{grad}_W \mathcal{F}(\gamma)\quad   \Longleftrightarrow\quad\frac{\partial\rho}{\partial t} = \nabla\cdot\left(\rho\nabla\frac{\delta F(\rho)}{\delta\rho}\right). \label{wasserstein grad flow density}
\end{equation}
Here $F(\rho)=\mathcal{F}(\rho\mathscr{L}^d)$ and $\frac{\delta F( \rho)}{\delta\rho}$ is the functional derivative of functional $F$ at density function $\rho$.

\subsection{Derivation of Wasserstein gradient flow for Entropy Transport Functional}\label{App b2}

We now follow the previous section to compute the gradient flow of $\mathcal{E}_{\Lambda,\rm{KL}}(\cdot|\mu,\nu)$ on $\mathcal{P}_2(\mathbb{R}^d\times\mathbb{R}^d)$. We assume every thing is in the form of Radon-Nikodym Derivative, i.e. we assume $\rho = \frac{d\gamma}{d\mathscr{L}^{2d}}$ and $\varrho_1=\frac{d\mu}{d\mathscr{L}^d}$, $\varrho_2=\frac{d\nu}{d\mathscr{L}^d}$. We denote $\rho_1=\frac{d{\pi_1}_{\#}\gamma}{d\mathscr{L}^d},\rho_2=\frac{d{\pi_2}_{\#}\gamma}{d\mathscr{L}^d}$, then $\rho_1 = \int \rho ~dy$, $\rho_2 = \int \rho ~dx$. We write the functional $\mathcal{E}_{\Lambda,\rm{KL}}(\gamma|\mu,\nu)$ as $E(\rho)$ for shorthand, then:
\begin{equation*}
   E(\rho)=\iint_{\mathbb{R}^d\times \mathbb{R}^d } \left(c(x,y)+\Lambda\log\left(\frac{\rho_1(x)}{\varrho_1(x)}\right)+\Lambda\log\left(\frac{\rho_2(y)}{\varrho_2(y)}\right)\right)\rho(x,y)~dxdy.
\end{equation*}
To compute $L^2$ variation of $E$ 
, suppose $\rho>0$ and consider arbitrary $\sigma\in C_0(\mathbb{R}^d\times\mathbb{R}^d)$. We denote $\sigma_1(x)=\int\sigma(x,y)~dy$ and $\sigma_2(y)=\int\sigma(x,y)~dx$. We now compute $\frac{d}{d h}
E(\rho+h\sigma)\Big|_{h=0}$ as:
\begin{align*}
   & \frac{d}{dh}E(\rho+h\sigma)\Big|_{h=0}\\
 = & \frac{d}{dh}\left[\iint \left( c(x,y)+\Lambda\log\left(\frac{\rho_1(x)+h\sigma_1(x)}{\varrho_1(x)}\right) + \Lambda\log\left(\frac{\rho_2(y)+h\sigma_2(y)}{\varrho_2(y)}\right) \right)(\rho(x,y)+h\sigma(x,y))~dxdy \right]_{h=0} \\
 = & \iint \left(\Lambda\frac{\sigma_1(x)}{\rho_1(x)}+\Lambda\frac{\sigma_2(y)}{\rho_2(y)}\right)\rho(x,y)~dxdy+\iint \left(c(x,y) +\Lambda\log\left(\frac{\rho_1(x)}{\varrho_1(x)}\right)+\Lambda\log\left(\frac{\rho_2(y)}{\varrho_2(y)}\right)\right) \sigma(x,y) ~dxdy\\
 =&\int \Lambda \sigma_1(x)~dx + \int \Lambda \sigma_2(y)~dy+\iint\left( c(x,y)+\Lambda\log\left(\frac{\rho_1(x)}{\varrho_1(x)}\right) + \Lambda\log\left(\frac{\rho_2(y)}{\varrho_2(y)}\right)\right)\sigma(x,y)~dxdy\\
 =&\iint \left(2\Lambda+c(x,y)+\Lambda\log\left(\frac{\rho_1(x)}{\varrho_1(x)}\right) + \Lambda\log\left(\frac{\rho_2(y)}{\varrho_2(y)}\right)\right)\sigma(x,y)~dxdy.
\end{align*}
Since $\frac{dE(\rho+h\sigma)}{dh}\Big|_{h=0}=\langle \frac{\delta E(\rho)}{\delta\rho}, \sigma \rangle$, we can thus identify that:
\begin{equation*}
  \frac{\delta E(\rho)}{\delta\rho}=2\Lambda+c(x,y)+\Lambda\log\left(\frac{\rho_1(x)}{\varrho_1(x)}\right) + \Lambda\log\left(\frac{\rho_2(y)}{\varrho_2(y)}\right).
\end{equation*}
Thus, plugging this result into formula \eqref{wasserstein grad flow density}, one can derive:
\begin{equation*}
  \frac{\partial\rho(x,y,t)}{\partial t} = \nabla\cdot\left( \rho(x,y,t) \nabla \left(2\Lambda+c(x,y)+\Lambda\log\left(\frac{\rho_1(x,t)}{\varrho_1(x)}\right) + \Lambda\log\left(\frac{\rho_2(y,t)}{\varrho_2(y)}\right)\right)  \right).
\end{equation*}
Notice that $\nabla$ means gradient with respect to both variables $x$ and $y$, i.e. $\nabla f = \left(\begin{array}{c}
     \nabla_x f \\
      \nabla_yf
\end{array}\right)$ for function $f:\mathbb{R}^d\times\mathbb{R}^d\rightarrow\mathbb{R}$, and $\nabla\cdot \vec{v} = \nabla_x\cdot\vec{v}_1 + \nabla_y\cdot\vec{v}_2$ for vector field $\vec{v} = \left(\begin{array}{c}
  \vec{v}_1\\
  \vec{v}_2
\end{array}\right):\mathbb{R}^d\times\mathbb{R}^d\rightarrow\mathbb{R}^d\times\mathbb{R}^d$, $\vec{v}_1:\mathbb{R}^d\times\mathbb{R}^d\rightarrow\mathbb{R}^d$; $\vec{v}_2:\mathbb{R}^d\times\mathbb{R}^d\rightarrow\mathbb{R}^d$.

Then this equation will simplify to:
\begin{equation*}
  \frac{\partial\rho(x,y,t)}{\partial t} = \nabla\cdot\left( \rho(x,y,t) \nabla \left(  c(x,y)+\Lambda\log\left(\frac{\rho_1(x,t)}{\varrho_1(x)}\right) + \Lambda\log\left(\frac{\rho_2(y,t)}{\varrho_2(y)}\right)\right)  \right).
\end{equation*}
Which is exactly equation \eqref{PDE vers} shown in section \ref{sec3.2}.

\end{document}